\numberwithin{equation}{section}
\numberwithin{figure}{section}
\def\Atilde{\widetilde{A}}
\def\height{\mathop{\hbox{\rm ht}}\nolimits}
\let\iso\cong
\def\Re{\mathop{\rm Re}\nolimits}
\def\Aut{\mathop{\hbox{\rm Aut}}\nolimits}
\def\calC{\EuScript{C}}
\def\halfX{{\textstyle\frac{1}{2}}X}
\def\halfTpX{\frac{1}{2}T_p X}
\def\O{{\rm O}}
\def\PU{{\rm PU}}
\def\M{\EuScript{M}}
\def\reverse{\mathop{\hbox{\rm reverse}}\nolimits}
\def\dodgelinewidth{.4pt}
\def\dodgeradius{2}
\def\spaceabovedodgeline{1pt}
\def\spacebelowdodgeline{1.2pt} 
\def\dodge#1{\mathop{\vbox{\m@th\ialign{##\crcr\noalign{\kern\spaceabovedodgeline}\dodgefill\crcr\noalign{\kern\spacebelowdodgeline\nointerlineskip}$\hfil\displaystyle{#1}\hfil$\crcr}}}\limits}
\def\dodgefill{$\m@th
\setbox0=\vbox to\dodgelinewidth{}
\leaders\vrule height\ht0 depth0pt\hfill\dodgebump\leaders\vrule height\ht0 depth0pt\hfill$}
\def\dodgebump{{\psset{unit=1pt}\begin{pspicture}(-\dodgeradius,0)(\dodgeradius,0)\psarc[linewidth=\dodgelinewidth](0,0){\dodgeradius}{0}{180}\end{pspicture}}}
\def\Looplinewidth{\dodgelinewidth}
\def\Loopradius{\dodgeradius}
\def\spaceabovemeridianline{\spaceabovedodgeline}
\def\spacebelowmeridianline{3pt}
\def\Loop#1{\mathop{\vbox{\m@th\ialign{##\crcr\noalign{\kern\spaceabovemeridianline}\Loopfill\crcr\noalign{\kern\spacebelowmeridianline\nointerlineskip}$\hfil\displaystyle{#1}\hfil$\crcr}}}\limits}
\def\Loopfill{$\m@th
\setbox0=\vbox to\Looplinewidth{}
\leaders\vrule height\ht0 depth0pt\hfill\Looploop$}
\def\Looploop{{\psset{unit=1pt}\begin{pspicture}(-\Loopradius,0)(\Loopradius,0)\psarc[linewidth=\Looplinewidth](0,0){\Loopradius}{0}{360}\end{pspicture}}}
\def\geodesiclinewidth{\dodgelinewidth}
\def\spaceabovegeodesicline{\spaceabovedodgeline}
\def\spacebelowgeodesicline{1pt}
\def\geodesic#1{\mathop{\vbox{\m@th\ialign{##\crcr\noalign{\kern\spaceabovegeodesicline}\geodesicfill\crcr\noalign{\kern\spacebelowgeodesicline\nointerlineskip}$\hfil\displaystyle{#1}\hfil$\crcr}}}\limits}
\def\geodesicfill{$\m@th
\setbox0=\vbox to\geodesiclinewidth{}
\leaders\vrule height\ht0 depth0pt\hfill\geodesicarrow$}
\def\geodesicarrow{\vrule height\geodesiclinewidth width1pt\kern-1pt
\smash[tb]{\lower2.29pt\hbox{$\rightharpoonup$}}}
\let\geodesic\overline
\def\complexgeodesic#1{\geodesic{#1}{}^{\C}}
\def\cong{\equiv}
\def\orb{{\rm\scriptstyle orb}}
\def\piorb{\pi_1^\orb}
\def\Im{\mathop{\rm Im}\nolimits}
\def\Isom{\mathop{\rm Isom}\nolimits}
\def\PGL{\mathop{\rm PGL}\nolimits}
\def\Leech{\Lambda}
\def\w{\omega}
\def\d{\delta}
\def\wbar{\bar{\w}}
\def\zetabar{\bar{\zeta}}
\def\E{\EuScript{E}}
\def\Z{\mathbb{Z}}
\def\Q{\mathbb{Q}}
\def\F{\mathbb{F}}
\def\R{\mathbb{R}}
\def\C{\mathbb{C}}
\def\ch{\C H}
\def\chn{\ch^n}
\def\e{\varepsilon}
\def\H{\EuScript{H}}
\def\tensor{\otimes}
\def\sset{\subseteq}
\def\G{\Gamma}
\def\PG{P\G}
\def\thetabar{\bar{\theta}}
\def\mbar{\bar{m}}
\def\ip#1#2{\langle#1\,{|}\,#2\rangle}
\def\biggend#1{\bigl\langle#1\bigr\rangle}
\def\Bigip#1#2{\Bigl\langle#1\Bigm|#2\Bigr\rangle}
\def\cell{\bigl(\begin{smallmatrix}0&\thetabar\\\theta&0\end{smallmatrix}\bigr)}
\newtheorem{theorem}{Theorem}[section]
\newtheorem{lemma}[theorem]{Lemma}
\newtheorem{conjecture}[theorem]{Conjecture}
\theoremstyle{remark}
\newtheorem{remark}[theorem]{Remark}
\begin{document}
\title{Geometric generators for braid-like groups}
\author{Daniel Allcock}
\thanks{First author supported by NSF grant DMS-1101566}
\address{Department of Mathematics\\University of Texas, Austin}
\email{allcock@math.utexas.edu}
\urladdr{http://www.math.utexas.edu/\textasciitilde allcock}
\author{Tathagata Basak}
\address{Department of Mathematics, Iowa State University, Ames IA, 50011.}
\email{tathastu@gmail.com}
\urladdr{https://orion.math.iastate.edu/tathagat/}
\subjclass[2010]{%
Primary: 57M05
; Secondary: 20F36
, 52C35
, 32S22
}

\date{January 12, 2015}

\begin{abstract}
We study the problem of finding generators for the fundamental group
$G$ of a space of the following sort: one removes a family of
complex hyperplanes from $\C^n$, or complex hyperbolic space $\chn$, or the
Hermitian symmetric space for $\O(2,n)$, and then takes the quotient
by a discrete group $\PG$.  The classical example is the braid group,
but there are many similar ``braid-like'' groups that arise in
topology and algebraic geometry.  Our main result is that if $\PG$
contains reflections in the hyperplanes nearest the basepoint,
and these reflections satisfy a certain property, then $G$ is
generated by the analogues of the generators of the classical braid
group.  We apply this to obtain generators for $G$ in a particular
intricate example in $\ch^{13}$.  The interest in this example
comes from a conjectured relationship between this braid-like group and the
monster simple group~$M$, that gives geometric meaning to the
generators and relations in the Conway-Simons presentation of $(M\times M):2$.
\end{abstract}

\maketitle
%
%
\section{Introduction}
\label{sec-introduction}

\noindent
The usual $n$-strand braid group of the plane was described by Fox and
Neuwirth \cite{Fox-Neuwirth} as the fundamental group of $\C^n$, minus
the hyperplanes $x_i=x_j$, modulo the action of the group generated by
the reflections across them (the symmetric group $S_n$).  The idea is
that a path $(x_1(t),\dots,x_n(t))$ in this hyperplane complement
corresponds to the braid whose strands are
the graphs of the maps $t\mapsto x_i(t)$,
regarded as curves in $[0,1]\times\C$.  The removal of the hyperplanes corresponds to the condition
that the strands do not meet each other.  The fundamental group of the
hyperplane complement is the pure braid group.  Impure braids
correspond to paths, not loops, in the hyperplane complement.  
But these paths become loops once we quotient by $S_n$, and one obtains
the usual braid group.

The term ``braid-like'' in the title is meant to suggest
groups that arise by this construction, generalizing the
choices of $\C^n$ and this particular hyperplane arrangement.  Artin
groups \cite{Brieskorn} and the braid groups of finite complex reflection
groups \cite{Bessis} are examples.  The problem we address is: find
generators for groups of this sort.  We are mainly interested in  the case that there are infinitely many hyperplanes,
for example coming from hyperplane arrangements in complex hyperbolic
space $\chn$.  Our specific motivation is a conjecture relating the monster finite simple group to the braid-like group associated to a certain hyperplane
arrangement in $\ch^{13}$.  
By our results and those of Heckman \cite{Heckman} and
Heckman--Rieken
\cite{Heckman-Rieken}, this conjecture now seems approachable.
We also suggest some
applications to algebraic geometry.

The general setting is the following: Let $X$ be complex Euclidean
space, or complex hyperbolic space, or the Hermitian symmetric space
for an orthogonal group $\O(2,n)$.  Let $\M$ be a locally finite set
of complex hyperplanes in $X$, $\H$ their union, and $\PG\sset\Isom X$
a discrete group preserving $\H$.  Let $a\in X-\H$.  Then the
associated ``braid-like group'' means the orbifold fundamental group
$$
G_a:=\piorb\bigl((X-\H)/\PG,a\bigr)
$$
See section~\ref{sec-loops-in-quotients} for the precise definition of this.  In many cases, $\PG$ acts freely on $X-\H$, so that the
orbifold fundamental group is just the ordinary fundamental group.

Our first result describes generators for $\pi_1(X-\H,a)$.  This is a
subgroup of $G_a$, since $X-\H$ is an orbifold covering space of
$(X-\H)/\PG$.  For $H\in\M$ we define in
section~\ref{sec-loops-in-arrangement-complements} a loop $\Loop{a H}$
that travels from $a$ to a point $c\in X-\H$ very near $H$, encircles
$H$ once, and then returns from $c$ to $a$.  We pronounce the notation
``$a$ loop $H$'' or ``$a$ lasso $H$''.  See
section~\ref{sec-loops-in-arrangement-complements} for details and
a slight generalization (theorem~\ref{thm-generators-for-metric-neighborhood-of-A}) of the following result:

\begin{theorem}
\label{thm-generators-for-X-minus-hyperplanes}
The loops $\Loop{a H}$, with $H$ varying over $\M$, generate
$\pi_1(X-\H,a)$.\qed
\end{theorem}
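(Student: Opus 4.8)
The plan is to use general position. Each of the three model spaces $X$ is diffeomorphic to a Euclidean space, hence contractible; in particular $\pi_1(X,a)=1$. So given a loop $\gamma$ representing a class in $\pi_1(X-\H,a)$, I may extend it to a continuous map $f\colon D^2\to X$ with $f|_{\partial D^2}=\gamma$ and with the basepoint of $\partial D^2$ sent to $a$. The goal is to show that $[\gamma]$ is a product of classes of lassos, by making $f$ transverse to the hyperplanes and reading off the answer on the resulting punctured disk.

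First I would arrange that $f$ meets the hyperplanes nicely. Since $D^2$ is compact, $f(D^2)$ is compact, so local finiteness of $\M$ gives an $\e>0$ such that the $\e$-neighborhood of $f(D^2)$ meets only finitely many members $H_1,\dots,H_m$ of $\M$. After a smooth approximation of $\gamma$ (harmless up to homotopy) and then of $f$ rel $\partial D^2$, the transversality theorem lets me homotope $f$, rel $\partial D^2$ and within the $\e$-neighborhood of $f(D^2)$, so that $f$ is simultaneously transverse to each of $H_1,\dots,H_m$; no other hyperplane is touched. As the $H_j$ have real codimension $2$ and $\dim D^2=2$, transversality forces $f^{-1}(H_j)$ to be a compact $0$-manifold, hence finite, and it lies in the interior of $D^2$ because $f|_{\partial D^2}=\gamma$ avoids $\H$. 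Let $p_1,\dots,p_k$ be the resulting points, $p_i$ lying on $H_{j(i)}$. At each $p_i$ the differential $df_{p_i}$ is injective (its image is a $2$-plane complementary to $T_{f(p_i)}H_{j(i)}$), so $f$ embeds a small disk around $p_i$ onto a local transverse slice to $H_{j(i)}$ meeting it only at $f(p_i)$; the boundary of that slice links $H_{j(i)}$ exactly once, with the correct sign because complex hyperplanes and the ambient space carry compatible orientations.

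Now choose, in $D^2\setminus\{p_1,\dots,p_k\}$, a system of disjoint arcs from the basepoint out to small loops $x_1,\dots,x_k$ encircling $p_1,\dots,p_k$, ordered so that $[\partial D^2]=x_1x_2\cdots x_k$ in the free group $\pi_1(D^2\setminus\{p_1,\dots,p_k\})$. Applying $f$, and using that $f$ maps $D^2\setminus\{p_1,\dots,p_k\}$ into $X-\H$, the image $f_*(x_i)$ is a loop at $a$ that runs along a path in $X-\H$ to a point very near $H_{j(i)}$, circles $H_{j(i)}$ once, and returns; that is, $f_*(x_i)$ is one of the loops $\Loop{a H_{j(i)}}$ (up to homotopy in $X-\H$, this identification being exactly what Section~\ref{sec-loops-in-arrangement-complements} is arranged to provide). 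Therefore $[\gamma]=f_*(x_1)\cdots f_*(x_k)$ is a product of classes of lassos $\Loop{aH}$, proving the theorem.

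The step I expect to be the real work is the transversality argument together with the bookkeeping of the local model near each $p_i$: one must check that the perturbation can be carried out in the curved ambient spaces (not only $\C^n$) without disturbing the boundary loop, that local finiteness keeps the number of relevant hyperplanes finite throughout, and---most delicately---that the small loop around $p_i$ maps precisely to a lasso $\Loop{aH_{j(i)}}$ rather than to some a priori different loop around $H_{j(i)}$. The slight strengthening recorded in Theorem~\ref{thm-generators-for-metric-neighborhood-of-A} suggests that the cleanest route is to prove the statement for a metric neighborhood of a subset of $X$ first, since that is precisely the setting in which this last identification is most transparent.
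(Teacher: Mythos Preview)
Your transversality argument has a genuine gap at exactly the point you flag as ``most delicate'': the loop $f_*(x_i)$ is \emph{not} in general homotopic to $\Loop{a H_{j(i)}}$. What you produce is a loop of the form $\alpha_i\,\lambda_i\,\alpha_i^{-1}$, where $\alpha_i$ is the $f$-image of your access arc in $D^2$ and $\lambda_i$ is a small circle linking $H_{j(i)}$. But the paper's $\Loop{aH}$ uses a very specific access path---the geodesic from $a$ to the nearest point of $H$, with semicircular detours---not an arbitrary path. Two such lassos around the same $H$ differ by conjugation by the loop obtained by following $\alpha_i$ out and the geodesic access path back; this conjugating element is some unknown member of $\pi_1(X-\H,a)$. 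So your argument proves only that $\pi_1(X-\H,a)$ is \emph{normally} generated by the $\Loop{aH}$'s, which is a weaker statement. Section~\ref{sec-loops-in-arrangement-complements} does not provide the identification you hope for; indeed Remark~\ref{rem-non-generic-case} shows that $\Loop{aH}$ can be quite different from a ``generic'' meridian around $H$.

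The paper avoids this by a different route: it grows an open $r$-neighborhood $B_r$ of the basepoint outward, using van Kampen's theorem at each stage. Lemma~\ref{lem-generators-for-pi-1-of-ball-plus-small-bulge} shows that adjoining a small ball $U_p$ centered at $p\in\partial B_r$ adds to the fundamental group at most one new generator, and that generator is precisely the lasso whose access path is the geodesic to $p$. The point is that inside $B_r\cup U_p$ every path from the basepoint to a neighborhood of $p$ is already homotopic (rel endpoints, in $B_r-\H$) to the radial one, so the access path is forced and no spurious conjugation appears. The induction on $r$, organized in the proof of Theorem~\ref{thm-generators-for-metric-neighborhood-of-A}, then gives generation rather than mere normal generation. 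Your approach could perhaps be salvaged by grafting on such an induction, but as written it does not close the gap.
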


If $a$ is generic enough then this follows easily from stratified
Morse theory \cite{Goresky-MacPherson}.  But in our applications it is
very important to take $a$ non-generic, because choosing it to have
large $\PG$-stabilizer can greatly simplify the analysis of
$\piorb\bigl((X-\H)/\PG,a\bigr)$.  So we prove
theorem~\ref{thm-generators-for-X-minus-hyperplanes} with no
genericity conditions on~$a$.  This lack of genericity complicates
even the definition of $\Loop{a H}$.  For example, $\Loop{a H}$ may
encircle some hyperplanes other than $H$, and this difficulty cannot
be avoided in any natural way (see remark~\ref{rem-non-generic-case}).  One might view
theorem~\ref{thm-generators-for-X-minus-hyperplanes} as a first step
toward a version of stratified Morse theory for non-generic
basepoints.

Next we consider generators for the orbifold fundamental group $G_a$
of $(X-\H)/\PG$.  In our motivating examples,
$\PG$ is generated by complex reflections in the hyperplanes $H\in\M$.
(A complex reflection means an isometry of finite order${}>1$ that
pointwise fixes a hyperplane, called its mirror.)  So suppose $H\in\M$
is the mirror of some complex reflection in $\PG$.  Then there is a
``best'' such reflection $R_H$, characterized by the following
properties: every complex
reflection in $\PG$ with mirror $H$ is a power of $R_H$, and $R_H$ acts on
the
normal bundle of $H$ by $\exp(2\pi i/n)$, where $n$ is the order
of~$R_H$.    

For each hyperplane $H\in\M$, we will define in
section~\ref{sec-loops-in-quotients} an element $\mu_{a,H}$ of $G_a$
which is the natural analogue of the standard generators for the
classical braid group.  Figure~\ref{fig-A2-example} illustrates the
construction for the $3$-strand braid group.  (We have drawn the 
subset of $\R^3\sset\C^3$ with coordinate sum~$0$, and our paths lie
in this $\R^2$ except where they dodge the hyperplanes.)  Recall that the definition of $\Loop{a H}$
referred to a point $c\in X-\H$ very near $H$, and a circle around $H$
based at~$c$.  We define $\mu_{a,H}$ to go from $a$ to $c$ as before,
then along the portion of this circle from $c$ to $R_H(c)$, then along
the $R_H$-image of the inverse of the path from $a$ to $c$.  This is a
path in $X-\H$, not a loop.  But $R_H$ sends its beginning point to
its end point, so we have specified a loop in $(X-\H)/\PG$.  So we may
regard $\mu_{a,H}$ as an element of $G_a$.  (Because $a$ may have
nontrivial stabilizer, properly speaking we must record the ordered
pair $(\mu_{a,H},R_H)$ rather than just $\mu_{a,H}$; see
section~\ref{sec-loops-in-quotients} for background on the orbifold
fundamental group.)

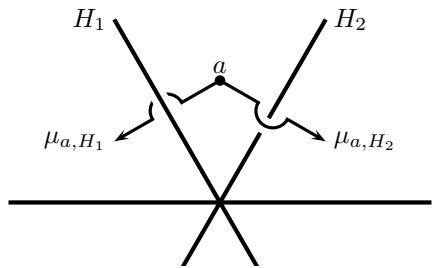
\begin{figure}
\psset{unit=80pt}
\def\URx{1}
\def\URy{.95}
\def\LLx{-\URx}
\def\LLy{-.3}
\begin{pspicture*}(\LLx,\LLy)(\URx,\URy)
\def\detourradius{.08}
\def\dotradius{.03}
\def\whitewidth{.08}
\def\hyperplanewidth{.02}
\def\pathwidth{.015}
\def\whitediskradius{.077} 
\psline[arrows=->,linewidth=\pathwidth](0,.577)(-.5,.289)
\pscircle[linestyle=none,fillstyle=solid,fillcolor=white](-.25,.433){\whitediskradius}
\psarc[linestyle=solid,fillstyle=none,linewidth=\pathwidth](-.25,.433){\detourradius}{30}{210}
\psline[linewidth=\whitewidth,linecolor=white](.5,-.866)(-.5,.866)
\psline[linewidth=\hyperplanewidth](.5,-.866)(-.5,.866)
\psline[arrows=->,linewidth=\pathwidth](0,.577)(.5,.289)
\pscircle[linestyle=none,fillstyle=solid,fillcolor=white](.25,.433){\whitediskradius}
\psline[linewidth=\hyperplanewidth](.5,.866)(-.5,-.866)
\psarc[linestyle=solid,fillstyle=none,linewidth=\whitewidth,linecolor=white](.25,.433){\detourradius}{210}{270}
\psarc[linestyle=solid,fillstyle=none,linewidth=\pathwidth](.25,.433){\detourradius}{150}{330}
\psline[linewidth=\hyperplanewidth](-1,0)(1,0) 
\pscircle[linestyle=none,fillstyle=solid,fillcolor=black](0,.577){\dotradius}
\uput{3pt}[90](0,.577){$a$}
\uput{3pt}[180](-.5,.866){$H_1$}
\uput{3pt}[180](-.5,.289){$\mu_{a,H_1}$}
\uput{3pt}[0](.5,.866){$H_2$}
\uput{3pt}[0](.5,.289){$\mu_{a,H_2}$}
\end{pspicture*}
\caption{The standard generators for the $3$-strand braid group, in
  terms of the $A_2$ hyperplane arrangement.}
\label{fig-A2-example}
\end{figure}

Referring again to figure~\ref{fig-A2-example}, generation of the $3$-strand braid
group $\piorb\bigl((\C^2-\H)/S_3,a\bigr)$ requires only the loops
$\mu_{a,H}$ for the hyperplanes $H$ closest to the basepoint $a$.  This is
proven in \cite{Fox-Neuwirth} for the $n$-strand braid
group.   For this to hold, one should choose $a$ as we did
here,  having the
same distance to every facet of the Weyl chamber.  
Our next theorem shows that the same holds in our more general
situation, provided that $a$ is chosen so that there are ``enough'' mirrors closest to it.
This analogy with the braid group  is the
source of the term ``geometric generators'' in our title.

\begin{theorem}
\label{thm-not-quite-strong-enough}
Suppose $\calC\sset\M$ are the hyperplanes closest to~$a$, and
that the complex reflections $R_C$ generate $\PG$, where $C$ varies
over $\calC$.  Suppose that for each $H\in\M-\calC$, some power of some
$R_C$ moves $a$ closer to $p$, where $p$ is the point of $H$ closest
to $a$.  Then the loops $(\mu_{a,C},R_C)$ generate
$G_a=\piorb\bigl((X-\H)/\PG,a\bigr)$.
\end{theorem}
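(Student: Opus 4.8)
The plan is to start from Theorem~\ref{thm-generators-for-X-minus-hyperplanes}, which tells us that $\pi_1(X-\H,a)$ is generated by the lassos $\Loop{a H}$ as $H$ ranges over \emph{all} of $\M$, and then to promote this into a statement about $G_a$. Recall the short exact sequence coming from the orbifold covering $X-\H\to(X-\H)/\PG$:
\begin{equation*}
1\longrightarrow\pi_1(X-\H,a)\longrightarrow G_a\longrightarrow\PG_a\longrightarrow 1,
\end{equation*}
where $\PG_a$ is a quotient of the stabilizer of $a$ (and equals that stabilizer when $\PG$ acts freely). So $G_a$ is generated by $\pi_1(X-\H,a)$ together with any set of elements of $G_a$ projecting to generators of $\PG_a$. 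Since $\PG$ is generated by the $R_C$, $C\in\calC$, the images of the pairs $(\mu_{a,C},R_C)$ generate $\PG_a$. Hence it suffices to show two things: first, that the subgroup $\Gamma_0\sset G_a$ generated by the $(\mu_{a,C},R_C)$ contains every lasso $\Loop{a H}$ for $H\in\M$; and second (as a consequence), that $\Gamma_0\supseteq\pi_1(X-\H,a)$, whence $\Gamma_0=G_a$.

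The first, easy case is $H=C\in\calC$: then $\mu_{a,C}$ traverses a path to $c$, a sub-arc of the circle about $C$ from $c$ to $R_C(c)$, and then the $R_C$-image of the reverse path; composing $(\mu_{a,C},R_C)$ with itself $n$ times (where $n=\order(R_C)$) walks all the way around the circle and returns to $a$, so $(\mu_{a,C},R_C)^n=\Loop{a C}$ in $G_a$, modulo the bookkeeping of the pair/stabilizer data. So $\Loop{a C}\in\Gamma_0$ for all $C\in\calC$. The heart of the matter is the remaining hyperplanes $H\in\M-\calC$. Here I would use the hypothesis: some power $g$ of some $R_C$ moves $a$ strictly closer to $p$, the point of $H$ nearest $a$. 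The idea is an inductive ``descent'' on the distance $d(a,H)$ (or rather on the finite set of values this distance can take on $\M$, which is discrete by local finiteness). Conjugating a lasso by an element of $\Gamma_0$ stays in $\Gamma_0$; and $g\in\Gamma_0$ because $g$ is a power of some $R_C$, which we have just shown lies in $\Gamma_0$ up to the covering-space bookkeeping. The lasso $\Loop{a H}$ should be expressible, up to elements already known to be in $\Gamma_0$, in terms of a lasso $\Loop{a'(g^{-1}H)}$ based at a point $a'$ which $g$ carries to $a$ — and since $g$ moves $a$ closer to $p$, the hyperplane $g^{-1}H$ is closer to $a$ than $H$ was. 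Iterating, $d(a,\cdot)$ strictly decreases, so after finitely many steps we land on a hyperplane in $\calC$ (the closest ones), which we have already handled. The technical content is a comparison-of-lassos lemma: if $g\in\PG$ and $g(a)=a$ up to the stabilizer, then $g\cdot\Loop{a H}\cdot g^{-1}$ (interpreted in $G_a$) equals $\Loop{a,gH}$ up to a correction supported on hyperplanes strictly closer to $a$ — or, more robustly, a statement of the form: $\Loop{a H}$ lies in the subgroup generated by $g$, the lassos of hyperplanes closer than $H$, and a single lasso of $gH$; this is where the non-genericity of $a$ (and the fact that $\Loop{a H}$ may encircle extra hyperplanes) bites, and I expect it to be the main obstacle.

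So the main steps, in order, are: (1) write down the orbifold covering exact sequence and reduce to showing $\Gamma_0:=\gend{(\mu_{a,C},R_C):C\in\calC}$ contains $\pi_1(X-\H,a)$; (2) by Theorem~\ref{thm-generators-for-X-minus-hyperplanes}, reduce further to showing $\Loop{a H}\in\Gamma_0$ for every $H\in\M$; (3) handle $H\in\calC$ via $(\mu_{a,C},R_C)^{\order(R_C)}=\Loop{a C}$; (4) for $H\in\M-\calC$, set up the induction on the discrete set of distances $\{d(a,H'):H'\in\M\}$, using that the relevant $R_C$-power $g$ lies in $\Gamma_0$; (5) prove the lasso-comparison lemma that lets one trade $\Loop{a H}$ for $g$ together with $\Loop{a'(g^{-1}H)}$ plus lassos of strictly-closer hyperplanes, and close the induction. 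The hard part is step~(5): making precise how a lasso transforms under pushing the basepoint by an isometry of $\PG$ when everything is non-generic, keeping careful track of which extra hyperplanes get encircled and verifying they are all strictly closer to $a$ so the induction is well-founded.
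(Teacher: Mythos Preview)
Your overall plan matches the paper's proof almost exactly: reduce via the exact sequence
\[
1\to\pi_1(X-\H,a)\to G_a\to\PG\to1
\]
to showing that $\Gamma_0$ contains $\pi_1(X-\H,a)$; use Theorem~\ref{thm-generators-for-X-minus-hyperplanes} to reduce to $\Loop{aH}\in\Gamma_0$ for all $H$; handle $H\in\calC$ by $(\mu_{a,C},R_C)^{n_C}=\Loop{aC}$; and proceed by induction on $d(a,H)$ for the rest. (One slip: the right-hand group in the exact sequence is $\PG$ itself, not a quotient of the stabilizer of $a$; the map $(\gamma,g)\mapsto g$ is onto all of $\PG$ because $X-\H$ is path-connected. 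This does not affect your argument, since you immediately use that the $R_C$ generate $\PG$.)

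Where you are honest about a gap---your step~(5)---the paper supplies a concrete mechanism that is a bit different from the direct ``lasso-comparison'' you sketch. Rather than comparing $\Loop{aH}$ to a single conjugated lasso plus corrections, the paper packages the inductive hypothesis as: $\Gamma_0$ already contains all of $\pi_1(B-\H,a)$, where $B$ is the open ball of radius $r=d(a,H)$ about $a$ (this is exactly what the lassos of strictly closer hyperplanes generate, by Theorem~\ref{thm-generators-for-metric-neighborhood-of-A}). The inductive step is then Lemma~\ref{lem-lowering-lemma}, which rests on the local ``bump'' Lemma~\ref{lem-generators-for-pi-1-of-ball-plus-small-bulge}: for a small ball $U$ about $p$, $\pi_1\bigl((B\cup U)-\H,a\bigr)$ is generated by $\pi_1(B-\H,a)$ together with a single loop $\alpha\lambda\alpha^{-1}$ where $\lambda$ links $H$ once inside $U$. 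One then writes the lift $(\mu_{a,C},R_C)^i$ of $g=R_C^i$ as $(\mu_1\mu_2,R)$ with $\mu_1\subset B-\H$ and $\mu_2\subset R(B)-\H$, chooses $\lambda$ inside $R(B)\cap U$ (this is exactly where the hypothesis ``$R$ moves $a$ closer to $p$'' is used, to guarantee $p\in R(B)$), and computes
\[
(\alpha\lambda\alpha^{-1},1)
=
(\mu_1\mu_2,R)\cdot\bigl(R^{-1}(\mu_2^{-1}\gamma\lambda\gamma^{-1}\mu_2),1\bigr)\cdot(\mu_1\mu_2,R)^{-1}.
\]
The middle factor is a loop in $B-\H$, hence already in $\Gamma_0$ by the inductive hypothesis, and the outer factors are in $\Gamma_0$ by construction. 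So the ``correction terms'' you anticipate are not tracked individually; they are absorbed wholesale into $\pi_1(B-\H,a)$. This sidesteps the non-genericity issue you flag: one never needs to identify which extra hyperplanes a given lasso encircles, only that everything happening strictly inside $B$ is already controlled.
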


The hypothesis of being able to move $a$ closer to the various $p$'s appears very
hard to check in practice.  But it can be done in some nontrivial
cases; its verification (slightly weakened) for our motivating example
occupies most of section~\ref{sec-Leech-meridians-generate}.

This motivating example is the setting for a conjectural relation between a
particular braid-like group and the monster simple group~$M$.  Here are
minimal details; see section~\ref{sec-background} for more background
and \cite{Allcock-monstrous} for full details.  We take $X$ to be complex hyperbolic
space $\ch^{13}$, and $\PG$ to be a particular finite-covolume 
discrete subgroup of
$\Aut\ch^{13}=PU(13,1)$, generated by complex reflections of order~$3$.
We take $\M$ to be the set of mirrors of the complex reflections in
$\PG$.  It turns out that any two mirrors are $\PG$-equivalent, so the
image of $\H$ in $X/\PG$ is irreducible.  The positively-oriented
boundary, of a small disk in $X/\PG$ transverse to a generic point of this image,
determines a conjugacy class in $\piorb\bigl((X-\H)/\PG\bigr)$.
Following knot theory terminology, we call the elements of this
conjugacy class {\it meridians}.

\begin{conjecture}[{\cite{Allcock-monstrous}}]
\label{conj-monstrous-proposal}
The quotient of $\piorb\bigl((X-\H)/\PG\bigr)$, by the normal subgroup
generated by the squares of the meridians, is the semidirect product
of  $M\times
M$ by $\Z/2$, where $M$ is the monster simple group and $\Z/2$
exchanges the factors in the obvious way.
\end{conjecture}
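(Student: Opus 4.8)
The plan is to reduce Conjecture~\ref{conj-monstrous-proposal} to a comparison of finite group presentations. By Theorem~\ref{thm-not-quite-strong-enough}, whose hypotheses are verified (in slightly weakened form) in Section~\ref{sec-Leech-meridians-generate}, the group $G_a=\piorb\bigl((X-\H)/\PG,a\bigr)$ is generated by the loops $(\mu_{a,C},R_C)$ with $C$ ranging over the finite set $\calC$ of mirrors closest to the distinguished basepoint $a$; in the $\ch^{13}$ example there are $26$ such mirrors, indexed by the points and lines of $\mathbb{P}^2(\F_3)$, so one already has an explicit small generating set of the kind appearing in the Conway--Simons presentation. The boundary downstairs of a small transverse disk lifts to exactly the arc from $c$ to $R_C(c)$ used to define $\mu_{a,C}$, so every meridian is conjugate in $G_a$ to one of the $\mu_{a,C}$; hence the normal subgroup generated by squares of meridians is $\bigl\langle\!\!\bigl\langle\,\mu_{a,C}^{\,2}:C\in\calC\,\bigr\rangle\!\!\bigr\rangle$, and in the quotient $Q:=G_a/\bigl\langle\!\!\bigl\langle\,m^2\,\bigr\rangle\!\!\bigr\rangle$ the generators become involutions. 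In particular each geometric braid relation between two $\mu_{a,C}$'s collapses to an ordinary Coxeter relation, so $Q$ is a quotient of the Coxeter group on the incidence graph of $\mathbb{P}^2(\F_3)$, and the content of the conjecture is that the further relations one must add are exactly the ``deflation'' relations of Conway--Simons.

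Carrying this out requires, beyond the generators, a controlled presentation of $G_a$: one needs to know that the braid relations coming from pairs of closest mirrors, together with a bounded number of longer ``cycle'' relations from the combinatorics of the arrangement near $a$, already suffice (the $A_2$ configuration of Figure~\ref{fig-A2-example} is the local model for two incident closest mirrors, and higher-rank analogues govern the rest). Rigorously, this means building a $\PG$-equivariant cell structure on $X-\H$ whose $2$-cells are understood, and it is here that the fundamental-domain and orbit analyses of Heckman \cite{Heckman} and Heckman--Rieken \cite{Heckman-Rieken} enter: they pin down how the closest mirrors and their $\PG$-translates fit together around $a$, which is the data that controls the $2$-cells. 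From such a structure one reads off a presentation of $G_a$, adjoins $m^2=1$ to get a presentation of $Q$, and Tietze-transforms it toward the Conway--Simons presentation. The comparison itself then has two halves: verifying that the $26$ Conway--Simons involutions satisfy the relations defining $Q$ (giving $(M\times M):2\twoheadrightarrow Q$), which reduces to checking the images of finitely many braid and cycle relations in the known group $(M\times M):2$; and verifying that the $\mu_{a,C}$ satisfy the Conway--Simons relations (giving $Q\twoheadrightarrow (M\times M):2$), which reduces to recognizing the ``deflation'' relation among the geometric relations.

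The step I expect to be the genuine obstacle is showing that \emph{no relations beyond the geometric ones are needed} --- equivalently, that the $2$-cell data really presents $G_a$ --- because the basepoint $a$ is deliberately non-generic. As the paper stresses, $\Loop{aC}$ (and hence $\mu_{a,C}$) can encircle hyperplanes other than $C$, so even writing down the correct braid and cycle relations, and then checking that they remain complete after one kills squares of meridians, is delicate; this is precisely the place where the Heckman--Rieken description of the arrangement near $a$ must be leveraged in full. If that hurdle is cleared, what remains is a finite --- if intricate --- manipulation of presentations, and the combination of Theorems~\ref{thm-generators-for-X-minus-hyperplanes} and~\ref{thm-not-quite-strong-enough}, the Heckman--Rieken structure theory, and the Conway--Simons presentation should suffice to settle the conjecture.
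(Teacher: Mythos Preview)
This statement is a \emph{conjecture}, and the paper does not prove it. It is stated as an open problem (attributed to \cite{Allcock-monstrous}) and serves as motivation: the paper's actual contribution toward it is Theorem~\ref{thm-announcement}, which supplies \emph{generators} for $\piorb\bigl((X-\H)/\PG\bigr)$, together with the supporting Theorem~\ref{thm-leech-generation}. The paper says explicitly that ``presumably, any proof of this will require generators and relations,'' and regards Theorem~\ref{thm-announcement} as ``a significant step toward'' the conjecture, not a proof of it. So there is no proof in the paper to compare your proposal against.

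Your proposal is not a proof but a programme, and you have correctly located its genuine gap yourself: obtaining a complete set of \emph{relations} for $G_a$. Nothing in this paper, nor in \cite{Heckman} or \cite{Heckman-Rieken}, produces such a presentation; those references are cited only as making the conjecture ``seem approachable.'' Your claim that the $2$-cell data from a $\PG$-equivariant cell structure ``should suffice'' is precisely the unproven content of the conjecture. A smaller but concrete error: you say that the hypotheses of Theorem~\ref{thm-not-quite-strong-enough} are verified in Section~\ref{sec-Leech-meridians-generate} for the basepoint in Theorem~\ref{thm-announcement}. They are not. The paper is explicit that for the basepoint $\tau$ of Theorem~\ref{thm-announcement} the hypothesis ``fails badly''; Section~\ref{sec-Leech-meridians-generate} instead works at the cusp $\rho$ to prove Theorem~\ref{thm-leech-generation}, and the passage from $\rho$ to $\tau$ is deferred to a separate paper. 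So even the generation statement you invoke is not established here in the form you use it.
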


Presumably, any proof of this will require generators and relations
for $\piorb\bigl((X-\H)/\PG\bigr)$, which is the motivation for the
current paper.  In \cite{Basak-bimonster-1} the second author found a
point $\tau\in X-\H$ (called $\bar\rho$ there), such that the set
$\calC$ of mirrors closest to~$\tau$ has size~$26$, and showed that
their complex reflections generate $\PG$.  Describing~$\tau$ precisely
requires some preparation, so we refer to section~\ref{sec-background} for details
and just
mention that~$\tau$
has nontrivial $\PG$-stabilizer.  Therefore the corresponding meridians are ordered pairs
$(\mu_{\tau,C},R_C)$ rather than just bare paths $\mu_{\tau,C}$.
Taking~$\tau$ as our basepoint,
we announce the following result, which we regard as a significant
step toward conjecture~\ref{conj-monstrous-proposal}.  

\begin{theorem}
\label{thm-announcement}
The $26$ meridians $(\mu_{\tau,C},R_C)$, with $C$ varying over the $26$ mirrors
of $\M$ closest to $\tau$, generate $\piorb\bigl((X-\H)/\PG,\tau\bigr)$.
\qed
\end{theorem}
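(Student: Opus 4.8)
The plan is to apply Theorem~\ref{thm-not-quite-strong-enough} with basepoint $a=\tau$ --- or rather the slightly weakened form of it whose displacement hypothesis is actually verifiable in this example, the weakening being forced because, as remarked after Theorem~\ref{thm-not-quite-strong-enough}, the literal ``move $a$ closer to $p$'' condition seems far too strong to check by hand. Two things must be supplied. The first is that the $26$ mirrors in $\calC$ are genuinely \emph{all} the mirrors nearest $\tau$ and that the order-$3$ reflections $R_C$, $C\in\calC$, generate $\PG$; this is exactly the theorem of~\cite{Basak-bimonster-1}, used here as a black box. The second, which is the substance of the argument, is the displacement statement: for every mirror $H\in\M-\calC$ one must exhibit a power of one of the $26$ generators $R_C$ that moves $\tau$ strictly closer to the foot $p_H$ of the perpendicular from $\tau$ to $H$ (or, under the weakened hypothesis, that forces the corresponding weaker conclusion about $\mu_{\tau,H}$).

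To make the displacement statement computable I would pass to the Eisenstein-lattice model of section~\ref{sec-background}: $\ch^{13}$ is the set of negative-norm $\E$-lines in the Hermitian lattice $L$ of signature $(13,1)$ whose reflection group is $\PG$, each mirror is the orthogonal complement of a norm-$3$ ``root'', $R_C$ is the standard order-$3$ reflection $x\mapsto x-(1-\omega)\,\ip{x}{c}\,c/3$ in the root $c$ of $C$, and $\tau$ is the distinguished vector $\bar\rho$ of~\cite{Basak-bimonster-1}, whose stabilizer $\PG_\tau$ is essentially the automorphism group $6\cdot\Suz$ of the complex Leech lattice. The standard distance formulas then turn ``$H$ is a nearest mirror'' into ``$|\ip{\tau}{r}|$ is minimal among roots $r$'', and turn ``$R_C^{\pm1}$ moves $\tau$ closer to $p_H$'' into the inequality $|\ip{R_C^{\pm1}\tau}{p_H}|<|\ip{\tau}{p_H}|$, where $p_H=\tau-\ip{\tau}{r}\,r/3$; both sides are explicit quadratic expressions in the three numbers $\ip{\tau}{r}$, $\ip{\tau}{c}$ and $\ip{c}{r}$.

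The verification is then a finite problem, because $\PG_\tau$ is so large that the mirrors in $\M-\calC$ fall into only a short list of $\PG_\tau$-orbits, indexed by the distance $d(\tau,H)$ together with the $\PG_\tau$-orbit type of the root $r$ modulo scalars; it suffices to treat one representative $H$ per orbit, search the $26$ roots $c$ and the two exponents $\pm1$ for one satisfying the displacement inequality, and verify it by a direct Gram-matrix computation. I expect the main obstacle to be precisely this orbit analysis: one needs a description of the $6\cdot\Suz$-orbits of roots sharp enough to be sure the case list is complete, and the ``hardest'' orbits --- those of mirrors only slightly farther from $\tau$ than the $26$ special ones --- are the ones most likely to fail the literal displacement test and to force the weakened hypothesis instead. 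Once every orbit has been disposed of, the weakened form of Theorem~\ref{thm-not-quite-strong-enough} applies and yields the stated generation of $\piorb\bigl((X-\H)/\PG,\tau\bigr)$ by the $26$ meridians $(\mu_{\tau,C},R_C)$.
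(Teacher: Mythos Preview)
Your plan has a genuine gap, and it is precisely the one the paper warns about.  The paper says explicitly (just after Theorem~\ref{thm-announcement}) that the displacement hypothesis of Theorem~\ref{thm-not-quite-strong-enough} at the basepoint $\tau$ ``fails badly''; it is not merely hard to check.  So the direct strategy you outline --- verify (a weakening of) the displacement condition orbit-by-orbit at $\tau$ --- is exactly the approach the authors tried and abandoned.

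There is also a concrete error underpinning your finiteness claim.  You assert that $\PG_\tau$ is ``essentially $6\cdot\Suz$'', but that is the stabilizer relevant to the \emph{cusp} $\rho$, not to $\tau$.  In the $P^2\F_3$ model $\tau=(4+\sqrt3;1,\dots,1)$ has $\PG$-stabilizer $\PGL_3(\F_3)\rtimes\Z/2$, a finite group of order~$11232$.  A finite stabilizer cannot reduce the infinitely many mirrors in $\M-\calC$ to ``a short list of $\PG_\tau$-orbits''; there are infinitely many orbits, and no uniform estimate of the sort in Lemma~\ref{lem-some-Leech-reflection-moves-cusp-closer-to-projection-point} is available at $\tau$.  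So your ``finite problem'' is not finite.

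The paper's route is entirely different.  It changes basepoint to the cusp $\rho$, whose stabilizer contains both $\Aut\Leech$ and an infinite Heisenberg group of translations; this is what makes the orbit analysis finite (Lemma~\ref{lem-classification-of-roots-of-height-theta}) and what allows a uniform covering-radius argument for all sufficiently distant mirrors (Lemma~\ref{lem-some-Leech-reflection-moves-cusp-closer-to-projection-point}).  Even then, the literal hypothesis~\eqref{item-can-move-A-closer-to-p} of Lemma~\ref{lem-lowering-lemma} fails for two of the three nearby orbits, and cases~\eqref{item-can-move-A-closer-to-H-and-no-further-from-A} and~\eqref{item-more-complicated-move-closer-condition} must be invoked.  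This yields Theorem~\ref{thm-leech-generation}, generation by the (infinitely many) Leech meridians.  The passage from Theorem~\ref{thm-leech-generation} back to the $26$ meridians at $\tau$ is a separate change-of-basepoint argument, ``delicate, of more specialized interest'', deferred to another paper.  Your proposal skips both the detour through $\rho$ and this transfer step, and neither can be dispensed with.
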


We wish this were a corollary of
theorem~\ref{thm-not-quite-strong-enough}.  Unfortunately the
hypothesis there about being able to move $\tau$ closer to the various
$p\in H\in\M$ fails badly.  Instead, the proof goes as follows.
First, in section~\ref{sec-Leech-meridians-generate} we prove theorem~\ref{thm-leech-generation}
below, which is the analogue of theorem~\ref{thm-announcement} with a
different basepoint $\rho$ in place of $\tau$.  For this basepoint,
one can (almost) verify the hypothesis of theorem~\ref{thm-not-quite-strong-enough} about being
able to move $\rho$ closer to the various $p$'s.  Considerable
calculation is required, together with extra work dealing with the fact that
this hypothesis almost holds but not quite.

Then, given theorem~\ref{thm-leech-generation}, one joins $\tau$ and $\rho$ by a path and uses it
to identify the fundamental groups based at these points.  One can
then prove theorem~\ref{thm-announcement} by studying how these groups' generators are
related.  The argument is delicate, of more specialized interest, and
has little in common with the ideas in this paper.  Therefore it will
appear separately.

After stating theorem~\ref{thm-leech-generation} we will explain our real reason for
preferring $\tau$ over $\rho$ as a basepoint.  A minor additional
reason is that $\rho$ is not a point of $\ch^{13}$.
Rather, it is a cusp of $\PG$, and in particular lies in the sphere at
infinity $\partial\ch^{13}$.  This complicates things in two ways.
First, there are infinitely many mirrors ``closest'' to $\rho$,
indexed by the elements of a certain $25$-dimensional integral
Heisenberg group.  And second, the definition of the meridians ``based
at~$\rho$'' requires more care.  This leads to theorem~\ref{thm-leech-generation} giving
an infinite generating set, consisting of paths that are
more complicated than
those of theorem~\ref{thm-announcement}.

One can work through these complications as follows.  As we did for
$\tau$, we refer to section~\ref{sec-background} for the precise
definition of $\rho$.
All we need for now is that it is a cusp of $\PG$ and that there is a
closed horoball $A$ centered at $\rho$ that misses $\H$
(lemma~\ref{lem-disjoint-horoball-exists}).
We choose any basepoint $a$ inside~$A$.  We call the
mirrors that come closest to $A$ the ``Leech mirrors''.  The name
comes from the fact that they are indexed by the elements of (a
central extension of) the complex Leech lattice $\Leech$; in
particular there are infinitely many of them.  If $C$ is a Leech
mirror, let $b_C\in A$ be the point of $A$ nearest it.  Then
$\mu_{a,A,C}$ is defined to be the geodesic $\geodesic{a b_C}\sset A$
followed by $\mu_{b_C,C}$ followed by $R_C(\geodesic{b_C a})$.  See
figure~\ref{fig-illustration-of-mu} for a picture.  These are
meridians in the sense of conjecture~\ref{conj-monstrous-proposal},
and we call them the Leech meridians.  As before, because $a$ may
have nontrivial $\PG$-stabilizer, the meridian associated to $C$ is
really the ordered pair $(\mu_{a,A,C},R_C)$ rather than just
the bare path $\mu_{a,A,C}$.

\begin{theorem}
\label{thm-leech-generation}
The orbifold fundamental group $\piorb\bigl((X-\H)/\PG,a\bigr)$ is generated by the Leech meridians,
that is, by the loops $(\mu_{a,A,C},R_C)$ with $C$ varying over the
(infinitely many) mirrors closest to $\rho$.
\end{theorem}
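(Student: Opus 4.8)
The plan is to combine Theorem~\ref{thm-generators-for-X-minus-hyperplanes} with a descent argument along the horoball $A$, using the fact that the action of $\PG$ near the cusp $\rho$ is large enough to sweep every mirror of $\M$ into a Leech mirror. First I would fix the basepoint $a\in A$ and, using Theorem~\ref{thm-generators-for-X-minus-hyperplanes} (or rather the metric-neighborhood refinement, Theorem~\ref{thm-generators-for-metric-neighborhood-of-A}, applied to the horoball $A$), reduce the problem to showing that each lasso $\Loop{aH}$, $H\in\M$, lies in the subgroup $\Gamma_{\rm Leech}\sset G_a$ generated by the Leech meridians $(\mu_{a,A,C},R_C)$ together with the image of $\pi_1(X-\H,a)$ that they already capture. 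The key structural input is that $\PG$ acts on $A$ through the stabilizer of $\rho$, which is an affine group with a cocompact (in fact, finite-index) Heisenberg subgroup acting on $\partial A$; in particular, for \emph{any} mirror $H\in\M$, there is $g$ in the subgroup generated by the $R_C$ (over Leech mirrors $C$) such that $gH$ is a Leech mirror. This is where the arithmetic of the complex Leech lattice enters.

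The main steps, in order, are: (i) establish that the Leech mirrors' reflections $R_C$ generate the cusp-stabilizing subgroup of $\PG$ acting on $A$ — equivalently, that the relevant Heisenberg group is generated by the translations coming from the $R_C$; (ii) show that for an arbitrary $H\in\M$ there is a point $b_H\in\partial A$ realizing the distance from $A$ to $H$, and that the geodesic segment $\geodesic{a\,b_H}$ stays in $A$, so that a ``generalized meridian'' $(\mu_{a,A,H},R_H)$ is well-defined by the same recipe as for Leech mirrors; (iii) prove that conjugating such a generalized meridian by an element $g$ as above carries it to a genuine Leech meridian, up to a correction that is itself a product of Leech meridians and a loop inside $A\cap(X-\H)$ — but $A$ is a horoball missing $\H$, hence contractible, so loops inside it are trivial; (iv) conclude that every $(\mu_{a,A,H},R_H)$ lies in $\Gamma_{\rm Leech}$, and then that every lasso $\Loop{aH}$ does too, since $\Loop{aH}$ differs from $(\mu_{a,A,H},R_H)\cdot(\mu_{a,A,H},R_H)^{\,\text{partial}}$-type combinations only by such Leech meridians and powers of $R_H$ (which, being a power of some conjugate of an $R_C$, is realized inside $\Gamma_{\rm Leech}$ too). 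Feeding this back through Theorem~\ref{thm-generators-for-metric-neighborhood-of-A} gives $G_a=\Gamma_{\rm Leech}$.

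The hard part will be step~(iii): controlling the ``correction terms'' when one conjugates a generalized meridian for $H$ by a word $g$ in the $R_C$'s to land on an actual Leech meridian. The issue is exactly the non-genericity phenomenon flagged after Theorem~\ref{thm-generators-for-X-minus-hyperplanes} — a lasso or meridian may incidentally encircle mirrors other than its own, and these incidental encirclements must be bookkept as products of other meridians rather than discarded. Concretely, one must check that the path $g\circ\mu_{a,A,H}$ and the path $\mu_{a,A,gH}$ are homotopic rel endpoints in $(X-\H)/\PG$ up to premultiplication by meridians of the mirrors that the ``difference'' region meets, and that the difference region is swept out inside (a $\PG$-translate of) the contractible horoball $A$ together with finitely many collar pieces each of which contributes a single meridian. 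Making this precise requires a careful homotopy built from the horoball's product structure $\partial A\times[0,\infty)$ and an equivariant retraction, and then an induction on the combinatorial ``distance'' from $H$ to the Leech mirrors measured in the Heisenberg group. The remaining steps are comparatively routine: (i) is a lattice-theoretic computation about the complex Leech lattice already implicit in \cite{Basak-bimonster-1}; (ii) follows from convexity of horoballs and of the distance function to a totally geodesic $H$; and (iv) is formal once (iii) is in hand.
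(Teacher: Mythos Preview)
Your proposal has a genuine gap rooted in a confusion between two very different subgroups. The Heisenberg group you invoke is the \emph{stabilizer of $\rho$}; it preserves the horoball $A$ and, crucially, preserves the height function $\height_\rho$. Hence it preserves $\bigl|\ip{\rho}{s}\bigr|$ for every root $s$, and therefore it \emph{cannot} carry a non-Leech mirror (one with $\bigl|\ip{\rho}{s}\bigr|>\sqrt3$) to a Leech mirror (one with $\bigl|\ip{\rho}{s}\bigr|=\sqrt3$). So the sentence ``$\PG$ acts on $A$ through the stabilizer of $\rho$'' is wrong as a basis for step~(iii): the elements that preserve $A$ do not change heights, while the elements $g$ that can send $H$ to a Leech mirror necessarily move $A$ off itself. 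Once $g(A)\neq A$, your ``difference region'' between $g\circ\mu_{a,A,H}$ and $\mu_{a,A,gH}$ is not contained in any translate of the horoball; it is a long tube in $\ch^{13}$ that generically crosses many mirrors, and you have no inductive control over those crossings. Your proposed ``combinatorial distance in the Heisenberg group'' is not a useful complexity measure for this reason: it is constant along orbits that do not change height.

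What the paper actually does is an induction on $\bigl|\ip{\rho}{s}\bigr|$, and the entire content of the theorem lies in the inductive step: given a non-Leech root $s$, find a \emph{single} triflection $R$ in a Leech root such that the hypotheses of the lowering lemma (lemma~\ref{lem-lowering-lemma}) are met---roughly, $R$ moves $\rho$ strictly closer to the nearest point $p$ of $s^\perp$, or one of two weaker substitutes. Verifying this is the hard part and occupies lemmas~\ref{lem-some-Leech-reflection-moves-cusp-closer-to-projection-point}--\ref{lem-hole-meridians-generated-by-leech-meridians}: one writes out $\ip{p}{R(\rho)}/\ip{p}{\rho}$ explicitly, reduces the choice of $l$ to placing a certain complex number $y$ in a region $V\subset\C$, and uses the covering radius $\sqrt3$ of the complex Leech lattice to arrange this. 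For $\bigl|\ip{\rho}{s}\bigr|^2>21$ the strict form works; for $\bigl|\ip{\rho}{s}\bigr|^2\in\{9,12,21\}$ one needs case analysis and, for one orbit at height~$9$, the most delicate variant~\eqref{item-more-complicated-move-closer-condition} of lemma~\ref{lem-lowering-lemma}. None of this is visible in your outline; in particular the covering-radius input, which is the real reason the induction terminates, is absent. Your step~(i) is also not what is needed: one does not want the Leech reflections to generate the cusp stabilizer, but rather to generate $\PG$ itself (which is cited from \cite{Basak-bimonster-1}), and then to supply the \emph{geometric} height-lowering of lemma~\ref{lem-lowering-lemma} one reflection at a time.
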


We promised to explain the real reason we prefer theorem~\ref{thm-announcement} to
theorem~\ref{thm-leech-generation}, that is, why we prefer the basepoint to
be $\tau$ rather than $\rho$.  It is because the $26$ meridians
of theorem~\ref{thm-announcement} are closely related to the coincidences that motivated
conjecture~\ref{conj-monstrous-proposal}.  In particular, by
\cite{Basak-bimonster-2} they satisfy the braid and commutation
relations specified by the incidence graph of the $13$ points and $13$
lines of $P^2(\F_3)$.  That is, two generators $x,y$ commute ($x y=y x$)
or braid ($x y x=y x y$) according to whether the corresponding nodes of
this graph are unjoined or joined.  We call the abstract group with
$26$ generators, subject to these relations, the Artin group of
$P^2(\F_3)$.

There are a family of presentations of the ``bimonster'' $(M\times
M)\rtimes \Z/2$ as a quotient of this Artin group, due to Conway,
Ivanov, Norton, Simons and Soicher in various combinations.  All of
them impose the relations that the generators have order~$2$, which
yields the (infinite) Coxeter group whose Coxeter diagram is the same
incidence graph.  Modding out by the squares of meridians in
conjecture~\ref{conj-monstrous-proposal} corresponds to taking this quotient.  There are
several different ways to specify additional relations that collapse
this Coxeter group to the bimonster.  The most natural one for our
purposes seems to be the ``$\Atilde_{11}$ deflation'' relations of
\cite{Conway-Simons}, because these have a good geometric
interpretation in terms of the $\mu_{\tau,C}$'s and a certain
copy of $\ch^9$ in $\ch^{13}$.  See  \cite{Heckman} and
\cite{Heckman-Rieken} for more details.

\medskip
We hope that our techniques will be useful more generally.  For
example, they might be used to give generators for the fundamental
group of the moduli space of Enriques surfaces.  Briefly, this is the
quotient of the Hermitian symmetric space for $\O(2,10)$, minus a
hyperplane arrangement, by a certain discrete group.  See
\cite{Namikawa} for the original result and
\cite{Allcock-Enriques} for a simpler description of the arrangement.
The symmetric space has two orbits of $1$-dimensional cusps, one of
which misses all the hyperplanes.  Taking this as the base
``point'', the hyperplanes nearest it are analogues of the Leech
mirrors.
%
It seems reasonable to hope that the meridians associated to these mirrors generate the
orbifold fundamental group.

There are many spaces in algebraic geometry with a description
$(X-\H)/\PG$ of the sort we have studied.  For example, the
discriminant complements of many hypersurface singularities
\cite{Looijenga-elliptic-singularities}\discretionary{}{}{}\cite{Looijenga-triangle-singularities},
the moduli spaces of del Pezzo surfaces
\cite{ACT-surfaces}\discretionary{}{}{}\cite{Kondo-genus-3}\discretionary{}{}{}\cite{Heckman-Looijenga}, 
the moduli space of curves of genus four \cite{Kondo-genus-4},
the moduli
spaces of smooth cubic threefolds
\cite{ACT-threefolds}\discretionary{}{}{}\cite{Looijenga-Swierstra} and fourfolds
\cite{Looijenga-fourfolds}, and the moduli spaces of lattice-polarized
K3 surfaces \cite{Nikulin}\discretionary{}{}{}\cite{Dolgachev}.  The orbifold fundamental
groups of these spaces are ``braid-like'' in the sense of this paper,
and we hope that our methods will be useful in understanding them.

\medskip
The paper is organized as follows.  In section~\ref{sec-loops-in-arrangement-complements} we study the
fundamental group $\pi_1(X-\H,a)$, in particular proving
theorem~\ref{thm-generators-for-X-minus-hyperplanes}.  The proof relies on van Kampen's theorem.  In
section~\ref{sec-loops-in-quotients} we study $\piorb\bigl((X-\H)/\PG,a\bigr)$, in
particular proving theorem~\ref{thm-not-quite-strong-enough}.  The core of that proof is
lemma~\ref{lem-lowering-lemma}, which is more general than needed for theorem~\ref{thm-not-quite-strong-enough}.
The extra generality is needed for our application to $\ch^{13}$.
Section~\ref{sec-background} gives background on complex hyperbolic space and the
particular hyperplane arrangement referred to in conjecture~\ref{conj-monstrous-proposal} and
theorems~\ref{thm-announcement}--\ref{thm-leech-generation}.  Finally, section~\ref{sec-Leech-meridians-generate} proves
theorem~\ref{thm-leech-generation}.  Most of the proof consists of tricky calculations
verifying the hypothesis of theorem~\ref{thm-not-quite-strong-enough} that the basepoint can be
moved closer to the various points $p\in H$.  In a few cases this is
not possible, so we have to do additional work.

The first author is very grateful to RIMS at Kyoto University for its
hospitality during two extended visits while working on this paper.
The second author would like to thank Kavli-IPMU at University of Tokyo 
for their hospitality during two one-month visits while working on this paper. 

\section{Loops in arrangement complements}
\label{sec-loops-in-arrangement-complements}
%
%
\noindent
For the rest of the paper we fix $X={}$one of three spaces, $\M={}$a
locally finite set of hyperplanes in $X$, and $\H={}$their union.  The
precise assumption on $X$ is that it is complex affine space with its
Euclidean metric, or complex hyperbolic space, or the Hermitian symmetric space
for $\O(2,n)$.  To understand the general machinery in this section
and the next, it is enough to think about the affine case.  In our
application in section~\ref{sec-Leech-meridians-generate} we
specialize to the case that $X$ is complex hyperbolic $13$-space;
for background see section~\ref{sec-background}.
Most of the other potential applications mentioned in the introduction
would use the $\O(2,n)$ case.

We will freely use a few standard properties of $X$: it is
contractible, and its natural Riemannian metric is complete and has
nonpositive sectional curvature.  By \cite[Theorems~II.1A.6 and
  II.4.1]{Bridson-Haefliger}, another way to state these properties is
that $X$ is a complete CAT(0) space.  We will also use the usual
notions of  
complex lines and complex hyperplanes in $X$, both of which are
totally geodesic.

For $b,c\in X$ we write $\geodesic{b c}$ for the
geodesic segment from $b$ to $c$.  Now suppose $b,c\notin\H$.  It may happen
that $\geodesic{b c}$ meets $\H$, so we will define a perturbation
$\dodge{b c}$ of $\geodesic{b c}$ in the obvious way.  The notation may be
pronounced ``$b$ dodge $c$'' or ``$b$ detour $c$''.  
We write $\complexgeodesic{b c}$ for the complex line containing
$\geodesic{b c}$.  By the local finiteness of $\M$, $\complexgeodesic{b c}\cap\H$ is a discrete
set.
Consider the path got
from $\geodesic{b c}$ by using positively oriented semicircular detours
in $\complexgeodesic{b c}$, around the points of $\geodesic{b c}\cap\H$,
in place of the corresponding segments of $\geodesic{b c}$.  After taking
the radius of these detours small enough, the construction
makes sense and the resulting homotopy class in $X-\H$ (rel
endpoints) is radius-independent.  This homotopy class is what we mean
by $\dodge{b c}$.

At times we will need to speak of the ``restriction of $\H$ at
$p$'', where $p$ is a point of~$X$.  So we write $\M_p$ for the set of hyperplanes in $\M$ that
contain~$p$, and $\H_p$ for their union.

Now suppose $b\in X-\H$ and $H\in\M$.  We will define a homotopy class
$\Loop{b H}$ of loops in $X-\H$ based at~$b$; the notation can be
pronounced ``$b$ loop~$H$'' or ``$b$ lasso~$H$''.  We write $p$ for
the point of $H$ nearest $b$.  It exists and is unique by 
$H$'s convexity and $X$'s nonpositive curvature
\cite[Prop.\ II.2.4]{Bridson-Haefliger}.  
Let $U$ be a ball around $p$ that is
small enough that $U\cap\H=U\cap\H_p$, and let $c$ be a point of
$(\geodesic{b p}\cap U)-\{p\}$.  Consider the circular loop in
$\complexgeodesic{b p}$ centered at $p$, based at $c$, and traveling
once around $p$ in the positive direction.  It misses $\H$, because
under the exponential map $T_p X\to X$, the elements of $\M_p$
correspond to complex hyperplanes in $T_p X$, while
$\complexgeodesic{b p}$ corresponds to a complex line.  And the line
misses the hyperplanes except at~$0$, because $b\notin\H$.  Finally,
$\Loop{b H}$ means $\dodge{b c}$ followed by this circular loop,
followed by $\reverse(\,\dodge{b c}\,)$.

\begin{remark}[Caution in the non-generic case]
\label{rem-non-generic-case}
This definition has some possibly unexpected behavior when $b$ is not
generic.  For example, take $\M$ to be the  $A_2$ arrangement
in $X=\C^2$, let $H$ be one of the three hyperplanes, and take $b\in X-\H$
orthogonal to~$H$.  
See figure~\ref{fig-A2-example}, with that figure's $a$ being $b$, and the unlabeled
hyperplane being~$H$.
It is easy to see that $\Loop{b H}$ encircles {\it
  all three hyperplanes}, not just~$H$.  Furthermore, this phenomenon cannot be avoided
by any procedure that respects symmetry.  To explain this we 
note that $\pi_1(X-\H,b)\iso \Z\times F_2$, where the first factor
is generated by $\Loop{b H}$ and the second is free on
$\Loop{b H_1}$ and $\Loop{b H_2}$, and $H_1$ and $H_2$
are the other two hyperplanes.  Let $f$ be the isometry of $X$ that fixes
$b$ and negates $H$.  It exchanges $H_1$ and $H_2$.  
So $f$'s action on $\pi_1(X-\H,b)$ fixes $\Loop{b H}$ and swaps the other
two generators.  It follows that the group of fixed points of $f$ in
$\pi_1(X-\H,b)$ is just the first factor~$\Z$.  So any
symmetry-respecting definition of $\Loop{b H}$ must give some power
of our definition.
\end{remark}

The main result of this section,
theorem~\ref{thm-generators-for-metric-neighborhood-of-A}, shows that
the various $\Loop{b H}$ generate $\pi_1(X-\H,b)$.  But for our
applications to $\ch^{13}$ in
section~\ref{sec-Leech-meridians-generate}, it will be useful to
formulate the fundamental group with a ``fat basepoint'' $A$ in place
of $b$.  This is because we will want to choose our basepoint to be a
cusp of a finite-covolume discrete subgroup of
$\PU(13,1)=\Aut\ch^{13}$.  Strictly speaking this is not possible,
since a cusp is not a point of $\ch^{13}$.  So we will use a closed
horoball $A$ centered at that boundary point in place of a basepoint.
For purposes of understanding the current section, the reader may take
$A$ to be a point.

\medskip
Our assumptions so far are that $X$ is one of three spaces, $\M$ is a
locally finite hyperplane arrangement, and $\H$ is the union of the
hyperplanes.  Henceforth we also assume that $A$ is a nonempty closed
convex subset of $X$, disjoint from $\H$.  To avoid some minor
technical issues, we assume two more properties, both
automatic when $A$ is a point.  First, for every
$H\in\M$, there is a unique point of $A$ closest to $H$.  (This holds
if $A$ is strictly convex, by the argument used for
\cite[Prop.\ II.2.4]{Bridson-Haefliger}.)  Second, some group of
isometries of $X$, preserving $\M$ and $A$, acts cocompactly on the
boundary $\partial A$.  (This holds in our application to $\ch^{13}$
because the stabilizer of a cusp acts cocompactly on any horosphere
centered there.)

We will think of the open $r$-neighborhood $B_r$ of
$A$, for some $r>0$ as being ``like'' an open ball.  If $A$ is a point
then of course $B_r$ actually is one.  In any case, the convexity
of $A$ implies that of $B_r$ by \cite[Cor.\ II.2.5]{Bridson-Haefliger}
and the remark preceding it.

Because $A-\H=A$ is simply connected (even contractible), the
fundamental groups of $X-\H$ based at any two points of $A$ are
canonically identified.  So we write just $\pi_1(X-\H,A)$
for $\pi_1(X-\H,a)$, where $a$ is any point of $A$.  If $c\in
X$ then we define $\geodesic{Ac}$ 
as $\geodesic{b c}$, where $b$ is the point of $A$ nearest $c$.
If $c\not\in\H$ then we also define $\dodge{Ac}$ 
as $\dodge{b c}$.
Similarly, if $H\in\M$ then we define $\Loop{AH}$ as $\Loop{b H}$,
where $b$ is the point of $A$ closest
to $H$.  
We sometimes  write $\geodesic{b,c}$ and $\dodge{b,c}$ and $\Loop{b,H}$
for  $\geodesic{b c}$ and $\dodge{b c}$ and $\Loop{b H}$,
and similarly for $\geodesic{A,c}$ and $\dodge{A,c}$ and $\Loop{A,H}$.

\begin{theorem}[$\pi_1$ of a ball-like set minus hyperplanes]
\label{thm-generators-for-metric-neighborhood-of-A}
Let $B_r$ be the open $r$-neighborhood of $A$, where $r\in(0,\infty]$.
  Then $\pi_1(B_r-\H,A)$ is generated by the $\Loop{AH}$'s for which
  $d(A,H)<r$.
\end{theorem}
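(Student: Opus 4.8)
The plan is to induct on the hyperplanes in order of their distance to $A$, building up $B_r-\H$ by a "radial" exhaustion and applying van Kampen's theorem at each stage. Order the elements of $\M$ with $d(A,H)<r$ as $H_1,H_2,\dots$ so that $d(A,H_1)\le d(A,H_2)\le\cdots$; by local finiteness of $\M$ and convexity of $A$, only finitely many mirrors lie within any given distance, so this is an honest (possibly infinite) enumeration with no accumulation below $r$. Pick radii $r_0<r_1<r_2<\cdots$ interleaving the distances, i.e.\ $d(A,H_k)<r_k<\min\bigl(r,d(A,H_{k+1})\bigr)$ when such $H_{k+1}$ exists, and $r_k\uparrow r$ otherwise. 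Set $U_0=B_{r_0}-\H$ and $U_k=B_{r_k}-\H$. Then $U_0$ is $B_{r_0}$ itself (it meets no hyperplane), hence contractible, so $\pi_1(U_0,A)=1$. Since $B_r-\H=\bigcup_k U_k$ is an increasing union of open sets, $\pi_1(B_r-\H,A)=\varinjlim\pi_1(U_k,A)$, so it suffices to show that $\pi_1(U_k,A)$ is generated by $\Loop{AH_1},\dots,\Loop{AH_k}$ and that the inclusion $U_{k-1}\hookrightarrow U_k$ adds exactly the new generator $\Loop{AH_k}$.

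The inductive step is the heart of the matter. Passing from $U_{k-1}$ to $U_k$ we are adjoining a collar region around the single new hyperplane $H_k$ (all other mirrors meeting $B_{r_k}$ already met $B_{r_{k-1}}$, by the choice of radii). Concretely, let $p_k\in H_k$ be the point nearest $A$, let $b_k\in A$ be the nearest point of $A$, and choose a small ball $W$ around $p_k$ with $W\cap\H=W\cap\H_{p_k}$; inside $W$ the picture is linear, $H_k$ corresponds to a complex hyperplane in $T_{p_k}X$ and $\complexgeodesic{b_k p_k}$ to a transverse complex line, so $W-\H_{p_k}$ deformation retracts in a way making $W\cap(B_{r_k}-\H)$ have the homotopy type of (something contractible)${}\times(\C^{*})$, with the $\C^{*}$-factor encircling $H_k$. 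Take an open set $V$ slightly thickening this collar so that $V\cap U_{k-1}$ is connected and simply connected relative to the relevant basepoint in $A$, and $U_k=U_{k-1}\cup V$. Van Kampen then gives $\pi_1(U_k,A)\iso\pi_1(U_{k-1},A)\ast\pi_1(V,A)$ amalgamated over $\pi_1(V\cap U_{k-1},A)$, and the only new generator contributed is the loop around $H_k$, which — traced through the identification via the geodesic $\geodesic{A b_k}=\geodesic{b_k p_k}$ and the dodging prescription — is exactly $\Loop{A H_k}$ up to homotopy in $U_k$, hence in $B_r-\H$. This identification of the van Kampen generator with the specific loop $\Loop{A H}$ defined in the text (including its dodges around the finitely many mirrors the geodesic $\geodesic{b_k p_k}$ happens to cross) is the point requiring the most care, precisely because of the non-genericity warned about in Remark~\ref{rem-non-generic-case}.

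Two technical points deserve attention. First, everything must be done with the "fat basepoint" $A$ rather than a point: one uses that $A-\H=A$ is contractible to identify all $\pi_1(X-\H,a)$ for $a\in A$ canonically, and that $B_{r_k}$ is convex (by \cite[Cor.\ II.2.5]{Bridson-Haefliger}), so the exhaustion stays well-behaved; the extra standing hypotheses on $A$ (uniqueness of the nearest point of $A$ to each $H$, cocompact symmetry on $\partial A$) ensure $\geodesic{AH}$ and $\Loop{AH}$ are unambiguously defined. Second, when $r=\infty$ the union $\bigcup_k B_{r_k}=X$ and the same colimit argument applies verbatim; when $r<\infty$ one just stops at the last mirror within distance $r$ (or takes the colimit over infinitely many, still landing inside $B_r$). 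The main obstacle, as noted, is the bookkeeping in the inductive step: verifying that the new free generator produced by van Kampen, expressed in coordinates adapted to $p_k$, coincides with the globally-defined homotopy class $\Loop{A H_k}$ — this is where the small-radius independence of the dodge construction and the CAT(0) geometry near $p_k$ both get used, and it is essentially the entire content of the theorem.
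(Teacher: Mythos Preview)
There are genuine gaps. The central one is your claim that ``only finitely many mirrors lie within any given distance'' of $A$. This is true when $A$ is compact, but it fails in the case the paper most cares about, where $A$ is a closed horoball in $\ch^{13}$: there the Leech mirrors form an \emph{infinite} family, all at the same distance from $A$ (the paper says so explicitly just after stating the theorem). So the enumeration $H_1,H_2,\dots$ with strictly interleaved radii $r_k$ cannot be set up. Even when $A$ is a point, several mirrors are typically equidistant from it---this is the whole point of choosing a basepoint with large stabilizer---so you cannot squeeze $r_k$ strictly between $d(A,H_k)$ and $d(A,H_{k+1})$. A second gap is the van Kampen step itself: you want $U_k=U_{k-1}\cup V$ with $V$ a small collar near the tangency point $p_k$, but $U_k\setminus U_{k-1}$ is the entire shell $(B_{r_k}\setminus B_{r_{k-1}})-\H$, and you give no argument that adjoining only a neighborhood of $p_k$ already surjects onto $\pi_1(U_k)$. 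Your local model ``(contractible)${}\times\C^*$'' for $W-\H_{p_k}$ also presumes $H_k$ is the only mirror through $p_k$, which need not hold.

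The paper avoids all of this by abandoning the discrete enumeration. It lets $r$ grow continuously, and at each fixed $r$ covers $B_{r+\delta}$ by $B_r$ together with small balls $U_p$ around \emph{every} point $p\in\partial B_r$. The cocompactness hypothesis on $\partial A$---which you list but use only for well-definedness of $\Loop{AH}$---is precisely what produces a uniform $\delta>0$, even when infinitely many $U_p$'s and infinitely many tangent mirrors are in play at once. Lemma~\ref{lem-generators-for-pi-1-of-ball-plus-small-bulge} (resting on the linear-arrangement Lemma~\ref{lem-pi-1-is-product}) analyzes each bump $B_r\cup U_p$ and correctly handles the case where several mirrors pass through $p$; Lemma~\ref{lem-CAT-0} supplies the star-shapedness that makes the iterated van Kampen over all the $U_p$'s legitimate.
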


If $A$ is compact, for example $A=\{a\}$, then this gives a finite
number of generators for $\pi_1(B_r-\H)$.  But if $A$ is non-compact
then the number of generators may be infinite.   This
happens in theorem~\ref{thm-leech-generation}, where $A$ is a horoball in $\ch^{13}$.  The
rest of the section is devoted to the proof of theorem~\ref{thm-generators-for-metric-neighborhood-of-A}, beginning with two
lemmas.

\begin{lemma}[$\pi_1$ of  $\C^n$ minus hyperplanes
    through the origin]
\label{lem-pi-1-is-product}
Suppose $X$ is complex Euclidean space, every $H\in\M$ contains the
origin~$0$, and $c\in X-\H$.  
Write $\halfX$ for the open halfspace of $X$
that contains $c$ and is bounded by the real orthogonal complement
to~$\geodesic{c0}$.  $($In the trivial case $\M=\emptyset$
we also assume $c\neq0$, so that $\halfX$ is defined.$)$  
\begin{enumerate}
\item
\label{item-if-H-not-in-M}
If $c$ is not orthogonal to any element of $\M$, then
$\pi_1(X-\H,c)$ is generated by $\pi_1(\halfX-\H,c)$. 
\item
\label{item-if-H-in-M}
If $c$ is orthogonal to some $H\in\M$, then $\pi_1(X-\H,c)$ is generated by 
$\pi_1(\halfX-\H,c)$ together with any element of $\pi_1(X-\H,c)$ having
linking number $\pm1$ with $H$, for example $\Loop{c H}$.
\end{enumerate}
\end{lemma}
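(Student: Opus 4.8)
The plan is to reduce everything to the behavior of the complex line $L=\complexgeodesic{c\,0}$ through $c$ and the origin, together with a radial "straight-line" deformation retraction of $X-\H$ onto a neighborhood of $c$. Since every $H\in\M$ is a complex hyperplane through $0$, the whole arrangement $\H$ is a cone: it is invariant under the real scaling action $x\mapsto tx$ for $t>0$, and also under the diagonal $\C^\times$-action. I would first use the $\R_{>0}$-scaling to see that $X-\H$ deformation retracts onto $\{x:\ \Re\ip{x}{c}\le \ip{c}{c}\}\cap(X-\H)$, i.e. onto the closed halfspace "behind" $c$ together with its bounding hyperplane slightly fattened; more precisely, pushing radially outward along rays from $0$ moves every point of $X-\H$ into $\overline{\halfX}-\H$ while staying in $X-\H$ (a ray from $0$ meets any $H\in\M$ only at $0$ or not at all). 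This already shows $\pi_1(X-\H,c)$ is generated by the image of $\pi_1(\overline{\halfX}-\H,c)$.

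Next I would compare $\overline{\halfX}$ with the open halfspace $\halfX$. The boundary hyperplane $W=\partial\halfX$ is the real orthogonal complement of $\geodesic{c\,0}$, a real hyperplane containing $0$. The key dichotomy is whether $W$ contains any complex hyperplane of $\M$: a complex hyperplane $H^\perp{}=0$ lies in the real hyperplane $\Re\ip{x}{c}=0$ precisely when $c\perp H$ (complex-orthogonality), because a real hyperplane contains a complex hyperplane iff it equals the real-orthogonal complement of a vector in that complex line. In case \eqref{item-if-H-not-in-M}, no $H\in\M$ is contained in $W$, so each $H$ meets $W$ in a set of real codimension $\ge 2$; then $\overline{\halfX}-\H$ deformation retracts onto $\halfX-\H$ (push slightly off the boundary wall $W$ in the $+c$ direction, which is transverse to every $H$ along $W$), giving (1). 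In case \eqref{item-if-H-in-M}, the hyperplanes of $\M$ contained in $W$ are exactly the mirrors complex-orthogonal to $c$, and crossing $W$ near such an $H$ changes the linking number with $H$ by $\pm1$ but does not interact with any other mirror. Attaching the "collar past $W$" to $\halfX-\H$ therefore adds exactly one new loop per such $H$, each of linking number $\pm1$ with its $H$; and any single choice, e.g. $\Loop{c H}$, suffices because different mirrors in $W$ are handled independently and the retraction already supplies generators linking the others. (Concretely: $\Loop{c H}$ is a small positively-oriented circle in $\complexgeodesic{c\,0}$ around $0$, which has linking number $+1$ with $H$ and $0$ with every other mirror of $\M$, since in $T_0X$ the line $\complexgeodesic{c\,0}$ meets each complex hyperplane only at $0$.)

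The main obstacle is the second retraction argument in case \eqref{item-if-H-in-M}: one must show that adjoining the region $\{\Re\ip{x}{c}<0\}$ near $W$ to $\halfX-\H$ contributes, up to the generators already present, only the single meridian $\Loop{c H}$ for each mirror $H\subset W$ — and in particular that these new loops can be taken to link the other mirrors trivially. I would make this precise with a van Kampen argument: cover $X-\H$ by $U=\halfX\cap(X-\H)$ fattened slightly, and, for each mirror $H\subset W$, a tubular piece $V_H$ around the part of $H$ near $c$'s ray, with $U\cap V_H$ connected and $\pi_1(V_H)$ infinite cyclic generated by $\Loop{c H}$. Because the mirrors through $0$ contained in $W$ are pairwise non-equal complex hyperplanes and the ray from $0$ through $c$ avoids all of $\H$, these tubes can be chosen disjoint from one another and meeting $U$ cleanly, so van Kampen yields $\pi_1(X-\H,c)$ as generated by $\pi_1(\halfX-\H,c)$ together with the $\Loop{c H}$'s; and any element of linking number $\pm1$ with a given $H$ differs from $\Loop{c H}$ by an element of the subgroup generated by the rest, which is already accounted for. (If $\M=\emptyset$ there is nothing to prove beyond the first retraction, whence the stated hypothesis $c\ne 0$.) Everything here is a routine consequence of the cone structure plus nonpositive curvature, so the work is in bookkeeping rather than in any new idea.
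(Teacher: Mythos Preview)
Your first step does not work, and the rest of the argument rests on it. You claim that radial scaling from $0$ deformation retracts $X-\H$ onto $\overline{\halfX}-\H$. But the ray $\{tx:t>0\}$ through a point $x$ with $\Re\ip{x}{c}<0$ never meets $\overline{\halfX}=\{\Re\ip{\cdot}{c}\ge 0\}$, since $\Re\ip{tx}{c}=t\,\Re\ip{x}{c}<0$ for all $t>0$. (Your own text is internally inconsistent here: you first name the target as $\{x:\Re\ip{x}{c}\le\ip{c}{c}\}$, the halfspace through $c$ containing $0$, and then immediately call it $\overline{\halfX}$, which is the halfspace through $0$ containing $c$. Radial scaling can retract onto the former but that is not the halfspace in the lemma.) So the reduction to loops in $\overline{\halfX}-\H$ is not justified, and the subsequent ``push off the wall $W$'' and van Kampen pieces have no foundation.

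The paper sidesteps this by using the full $\C^\times$-action rather than only $\R_{>0}$. It proves case~\eqref{item-if-H-in-M} first: if $H\in\M$ is complex-orthogonal to $c$, then the affine translate $H'$ of $H$ through $c$ gives a genuine product decomposition $X-\H\cong(H'-\H)\times(\C-\{0\})$, since every point of $X-\H$ is a unique nonzero complex scalar times a point of $H'-\H$. Because $H'\subset\halfX$, this immediately gives $\pi_1(X-\H,c)=\pi_1(\halfX-\H,c)\times\biggend{\Loop{cH}}$. Case~\eqref{item-if-H-not-in-M} is then deduced by \emph{adding} the complex hyperplane $H=c^\perp$ to $\M$, applying the product decomposition to $\M'=\M\cup\{H\}$, and observing that the meridian of $H$ dies under the surjection $\pi_1(X-\H',c)\to\pi_1(X-\H,c)$ while $\halfX-\H'=\halfX-\H$. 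Note also that at most one element of $\M$ can be complex-orthogonal to $c$ (its orthogonal complement is the complex line through $c$), so your allowance for several such $H$'s is unnecessary.
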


\begin{proof}
\eqref{item-if-H-in-M} Write $H'$ for the
translate of $H$ containing $c$.  Every point of $X-\H$ is a nonzero
scalar multiple of a unique point of $H'-\H$.  It follows that $X-\H$
is the topological product of $H'-\H\sset\halfX-\H$ with $\C-\{0\}$.  The map
$\pi_1(X-\H,c)\to\Z$ corresponding to the projection to the second
factor is the linking number with $H$.  All that remains to prove is
that $\Loop{c H}$ has linking number~$1$ with $H$.  In fact more is
true: essentially by definition, this loop generates the fundamental
group of the factor $\C-\{0\}$.

\eqref{item-if-H-not-in-M} 
We define $H$ as the complex hyperplane through~$0$ that is orthogonal
to $\geodesic{c0}$. 
We apply the previous paragraph to
$\M'=\M\cup\{H\}$ and $\H'=\H\cup H$. 
Using $\halfX-\H=\halfX-\H'$
yields 
\begin{equation}
\label{eq-direct-product-decomposition}
\pi_1(X-\H',c)
=
\pi_1(\halfX-\H',c)\times\biggend{\Loop{c H}}
=
\pi_1(\halfX-\H,c)\times\biggend{\Loop{c H}}.
\tag{*}
\end{equation}
Our goal is to show that the first factor on the right surjects to
$\pi_1(X-\H,c)$. 
Let $\gamma$ be any element of $\pi_1(X-\H',c)$ that is freely
homotopic to 
the boundary of a small disk transverse to $H$ at a
generic point of $H$.  It dies under the natural map
$\pi_1(X-\H',c)\to\pi_1(X-\H,c)$.  
Because
$\gamma$ has linking number $\pm1$ with $H$, the  product
decomposition \eqref{eq-direct-product-decomposition}
shows that every element of $\pi_1(X-\H',c)$ can be
written as  a power of $\gamma$ times an element of
$\pi_1(\halfX-\H,c)$.  It is standard that
$\pi_1(X-\H',c)\to\pi_1(X-\H,c)$ is surjective.  (Take any loop in
$X-\H$, perturb it to miss
$H$,  and then regard it as a loop in $X-\H'$.)  Since this map kills
$\gamma$, it must send the subgroup $\pi_1(\halfX-\H,c)$ of $\pi_1(X-\H',c)$ 
surjectively to $\pi_1(X-\H,c)$.
\end{proof}

\begin{lemma}[$\pi_1$ of a ball-like set with a bump, minus hyperplanes]
\label{lem-generators-for-pi-1-of-ball-plus-small-bulge}
Suppose $r>0$, $B$ is the open $r$-neighborhood of $A$, and
$p\in\partial B$.  Assume $U$ is any open ball centered at $p$, small
enough that $U\cap\H=U\cap\H_p$.
\begin{enumerate}
\item
\label{case-union-when-H-not-a-mirror}
If no $H\in\M_p$ is orthogonal to $\geodesic{A p}$, then 
$\pi_1\bigl((B\cup U)-\H,A\bigr)$ is generated by
the image of $\pi_1(B-\H,A)$.  
\item
\label{case-union-when-H-a-mirror}
If some $H\in\M_p$ is orthogonal to $\geodesic{A p}$, then
$\pi_1\bigl((B\cup U)-\H,A\bigr)$ is generated by the image of
$\pi_1(B-\H,A)$, together with any loop 
of the following
form $\alpha\lambda\alpha^{-1}$, for example $\Loop{AH}$.  Here $\alpha$ is a path in $B-\H$ from
$A$ to a point of $(B\cap U)-\H$ and $\lambda$ is a
loop in $U-\H$, based at that point and  having linking number~$\pm1$ with $H$.
\end{enumerate}
\end{lemma}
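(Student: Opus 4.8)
The plan is to apply van Kampen's theorem to the cover of $(B\cup U)-\H$ by $B-\H$ and $U-\H$, and then to analyze the $U$-piece by transporting it to the tangent space $T_pX$ and invoking Lemma~\ref{lem-pi-1-is-product}. Let $b$ be the point of $A$ nearest $p$, so $\geodesic{A p}=\geodesic{b p}$, and pick the basepoint $c\in(\geodesic{b p}\cap U)\setminus\{p\}$. Every point of $\geodesic{b p}\setminus\{p\}$ is strictly nearer $A$ than $p$ is, so $c\in B\cap U$, and $b$ is also the point of $A$ nearest $c$, so $\dodge{A c}=\dodge{b c}$ is a path in $B-\H$ from $A$ to $c$. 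Since $A$ is contractible and disjoint from $\H$, it suffices to prove the assertion with basepoint $c$ in place of $A$ and then conjugate everything by $\dodge{A c}$. The three sets $B-\H$, $U-\H$, $(B\cap U)-\H$ are open and connected — each is an open convex region (a ball, for $U$) with a real-codimension-two subset removed — so van Kampen's theorem shows that $\pi_1\bigl((B\cup U)-\H,c\bigr)$ is generated by the images of $\pi_1(B-\H,c)$ and $\pi_1(U-\H,c)$; and since $B\cap U\subseteq B$, the image of $\pi_1\bigl((B\cap U)-\H,c\bigr)$ lies inside that of $\pi_1(B-\H,c)$. So it is enough to show that $\pi_1(U-\H,c)$ is generated by the image of $\pi_1\bigl((B\cap U)-\H,c\bigr)$, together with (in case~\eqref{case-union-when-H-a-mirror}) one loop of the form $\alpha\lambda\alpha^{-1}$ as described.

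Pass to $T_pX$ via $\exp_p$. This diffeomorphism carries the round ball $W$ of the same radius about $0$ onto $U$, carries each $H\in\M_p$ to the complex-linear hyperplane $T_pH$, and sends $0$ to $p$ and a point $v$ to $c$, where $v$ lies on the ray from $0$ in the direction of $\geodesic{b p}$. Writing $\L$ for this linear arrangement and $V:=\{w\in T_pX:\Re\ip{w}{v}>0\}$, we get $\pi_1(U-\H,c)\iso\pi_1(W-\L,v)$, and the inclusion $W-\L\hookrightarrow\C^n-\L$ is a homotopy equivalence (radial retraction), so $\pi_1(W-\L,v)\iso\pi_1(\C^n-\L,v)$. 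Now $v\neq0$ (as $c\neq p$) and $v\notin\L$ (as $c\notin\H$), and a hyperplane $H\in\M_p$ is orthogonal to $\geodesic{A p}$ exactly when $T_pH=v^{\perp}$ — i.e.\ exactly when $v$ is orthogonal to the corresponding element of $\L$. Thus the dichotomy in the statement is exactly that of Lemma~\ref{lem-pi-1-is-product} applied to $(\C^n,\L,v)$, and that lemma tells us $\pi_1(\C^n-\L,v)$ is generated by $\pi_1(V-\L,v)$ together with, in case~\eqref{case-union-when-H-a-mirror}, any loop of linking number $\pm1$ with the perpendicular hyperplane $H$ — for instance the image under $\exp_p$ of a small circle around $0$ inside the complex line $\C v$.

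The geometric heart of the matter is that $W_B:=\exp_p^{-1}(B\cap U)$ lies inside $V$ and has $V$ as its tangent cone at $0$. For $q\in B\cap U$ one has $d(q,A)<r=d(p,A)$; since $b$ is the foot of the perpendicular from $p$ to $A$ and $d(\,\cdot\,,A)$ is convex, the first-variation inequality along $\geodesic{p q}$ (\cite{Bridson-Haefliger}) gives $\cos\angle_p(b,q)\ge r-d(q,A)>0$, i.e.\ $\Re\ip{\exp_p^{-1}(q)}{v}>0$; and near $p$ the tangent cone of $B$, hence of $B\cap U$, is the open halfspace with inner normal $-\nabla d(p,A)$, which is $V$. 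Using this together with the invariance of $V$ and of $\L$ under positive scaling, any loop in $V-\L$ based at $v$ can be scaled toward $0$ to a homotopic loop (the homotopy staying in $V-\L$) lying in $W_B-\L$: once the scale is small the scaled loop lies in a thin truncated cone over a compact subset of $V$, hence in $W_B$, and the scaling path issuing from $v$ runs along $\R_{>0}v=\exp_p^{-1}\bigl(\geodesic{b p}\setminus\{p\}\bigr)\subseteq W_B$. Therefore $\pi_1(W_B-\L,v)\to\pi_1(V-\L,v)$ is surjective, so the image of $\pi_1(V-\L,v)$ in $\pi_1(\C^n-\L,v)\iso\pi_1(U-\H,c)$ coincides with the image of $\pi_1(W_B-\L,v)\iso\pi_1\bigl((B\cap U)-\H,c\bigr)$. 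Together with the previous paragraph this settles case~\eqref{case-union-when-H-not-a-mirror}, and in case~\eqref{case-union-when-H-a-mirror} shows that $\pi_1(U-\H,c)$ is generated by the image of $\pi_1\bigl((B\cap U)-\H,c\bigr)$ together with one loop of linking number $\pm1$ with $H$.

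Finally, transporting the small circle around $0$ in $\C v$ back through $\exp_p$ produces a loop $\lambda$ in $U-\H$ based at $c$, namely (up to homotopy in $U-\H$) the circle around $p$ in $\complexgeodesic{b p}$, of linking number $\pm1$ with $H$; then $\dodge{A c}\cdot\lambda\cdot\reverse(\dodge{A c})$ has the asserted form $\alpha\lambda\alpha^{-1}$, and because $H\perp\geodesic{b p}$ forces $b$ to be the point of $A$ closest to $H$ (the common perpendicular $\geodesic{b p}$ realizes $d(A,H)$), this loop is $\Loop{A H}$ itself. Conjugating the resulting generating set by $\dodge{A c}$ to move the basepoint back to $A$ completes the proof. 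The step I expect to be the main obstacle is the third paragraph: one must show that the straight halfspace wedge $V$ delivered by Lemma~\ref{lem-pi-1-is-product} can be deformed, without losing $\pi_1$, into the genuine region $\exp_p^{-1}(B\cap U)$, which is curved and non-convex in the $\chn$ and $\O(2,n)$ cases. This is exactly where the CAT(0) first-variation estimate and the conicality/tangent-cone argument enter; both are trivial in the Euclidean model, where $\exp_p^{-1}(B\cap U)$ is simply a round ball internally tangent to $\partial V$ at $0$.
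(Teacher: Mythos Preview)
Your proof is correct and follows essentially the same route as the paper's: van Kampen on $B-\H$ and $U-\H$, reduce to showing $\pi_1(U-\H,c)$ is generated by the image of $\pi_1((B\cap U)-\H,c)$ (plus one loop in case~\eqref{case-union-when-H-a-mirror}), transport to $T_pX$ via $\exp_p$, and invoke Lemma~\ref{lem-pi-1-is-product} together with a scaling argument comparing $\exp_p^{-1}(B\cap U)$ with the halfspace $\tfrac12 T_pX$.

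The only real difference is in how the scaling step is justified. The paper avoids any direct claim about the shape of $\log(B\cap U)$ by inscribing a small metric ball $D\subseteq B$ tangent to $\partial B$ at $p$ and observing that $\log D$ is smooth and tangent to $\partial(\tfrac12 T_pX)$ at~$0$; this is enough to scale compacta of $\tfrac12 T_pX$ into $\log(B\cap U)$. You instead argue directly via the CAT(0) first variation of $d(\,\cdot\,,A)$ that $W_B\subseteq V$ and that $V$ is the tangent cone. Your displayed inequality $\cos\angle_p(b,q)\ge r-d(q,A)$ is dimensionally off---what convexity of $d(\,\cdot\,,A)$ together with the first-variation formula actually gives is $\cos\angle_p(b,q)\ge\bigl(r-d(q,A)\bigr)/d(p,q)>0$---but the intended conclusion $\angle_p(b,q)<\pi/2$ is correct in the three spaces under consideration (where $d(\,\cdot\,,A)$ is smooth near $p$ with gradient the outward unit along $\geodesic{bp}$). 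Either justification suffices; the paper's inscribed-ball trick has the advantage of sidestepping any smoothness questions about $\partial B$.
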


\begin{proof}
For uniformity, in case \eqref{case-union-when-H-not-a-mirror} we
choose some path $\alpha$ in $B-\H$ beginning in $A$ and ending in
$(B\cap U)-\H$.  In both cases we write $c$ for the
final endpoint of $\alpha$; without loss of generality we may suppose $c\in\geodesic{A p}-\{p\}$.
Van Kampen's theorem shows that
$\pi_1\bigl((B\cup U)-\H,c\bigr)$ is generated by the images of
$\pi_1(B-\H,c)$ and $\pi_1(U-\H,c)$.  We claim that $\pi_1(U-\H,c)$
is generated by the image of $\pi_1\bigl((B\cap U)-\H,c)$,
supplemented in case~\eqref{case-union-when-H-a-mirror} by
$\lambda$.  

Assuming this, we move the basepoint from $c$ into $A$ along
$\reverse(\alpha)$.  This identifies 
$\pi_1(B-\H,c)$ with $\pi_1(B-\H,A)$, $\lambda$ with $\alpha\lambda\alpha^{-1}$, 
and
the elements of $\pi_1\bigl((B\cap U)-\H,c\bigr)$ with
certain loops in $B-\H$ based in $A$.  It follows that
$\pi_1\bigl((B\cup U)-\H,A\bigr)$ is generated by the image of
$\pi_1(B-\H,A)$, supplemented in
case~\eqref{case-union-when-H-a-mirror} by $\alpha\lambda\alpha^{-1}$.  This is the
statement of the theorem.

So it suffices to prove the claim.  We transfer this to a problem in
the tangent space $T_p X$ by the exponential map and its inverse
(written $\log$).  
So we must show that
$\pi_1(\log U-\log\H_p,\log c)$ is generated by the image of
$\pi_1\bigl(\log (B\cap U)-\log\H_p,\log c\bigr)$, supplemented in
case~\eqref{case-union-when-H-a-mirror} by $\log\lambda$.  The key to this is that the vertical
arrows in the following commutative diagram are homotopy equivalences.
$$
\begin{CD}
\log(B\cap U)-\log\H_p
@>>>
\log U-\log\H_p
\\
@VVV
@VVV
\\
\halfTpX-\log\H_p
@>>>
T_p X-\log\H_p
\\
\end{CD}
$$
Here $\frac{1}{2}T_p X$ is as in
lemma~\ref{lem-pi-1-is-product}: the open halfspace containing
$\log c$ and bounded by the (real) orthogonal complement of
$\log(\geodesic{c p})=\geodesic{\log c,0}$.  

The right vertical arrow is a weak homotopy equivalence by a standard
scaling argument:  any compact set in $T_p X$ can be scaled down
until it lies in $\log U$, and scaling preserves $\log\H_p$.  

The same argument works for the left one: since the boundary of $\log
B$ is tangent to the boundary of $\frac{1}{2}T_p X$ at~$0$, it is easy
to see that  any compact subset of $\frac{1}{2}T_p X$ can be
sent into $\log B\cap \log U$ by multiplying it by a sufficiently small
scalar.  (One might worry about basic properties of $\partial B$ like
smoothness, since $B$ was defined in terms of $A$, on which we made no
smoothness assumptions.  One can circumvent all worry by observing the
following consequence of the triangle inequality: for small 
$\e$, $B$ contains the open $\e$-ball $D$ around the point of
$\geodesic{A p}$ at distance $\e$ from $p$. And the boundary of $\log
D$ is indeed smooth and tangent to $\frac12 T_p X$.)

Both weak homotopy equivalences are homotopy equivalences by
Whitehead's theorem (or one could refine the scaling argument).  To
prove the theorem it now suffices to show that $\pi_1(T_p
X-\log\H_p,\log c)$ is generated by the image of $\pi_1(\frac{1}{2}T_p
X-\log\H_p,\log c)$, supplemented in
case~\eqref{case-union-when-H-a-mirror} by $\log\lambda$.  This is
just lemma~\ref{lem-pi-1-is-product}, completing the proof.
\end{proof}

\begin{proof}[Proof of theorem~\ref{thm-generators-for-metric-neighborhood-of-A}]
Let $R$ be the set of $r\in(0,\infty]$ for which the conclusion of the
  theorem holds.  By our assumption that some group of isometries of
  $X$ preserves $\M$ and $A$ and acts cocompactly on $\partial A$, the
  distances $d(A,H)$ are bounded away from~$0$, as $H$ varies over
  $\M$.  Therefore $B_r\cap\H=\emptyset$ for all sufficiently small
  $r$.  It follows that $R$ contains all
small enough $r$.
We will show below that that if $r\in R-\{\infty\}$ then
  $[r,r+\d)\sset R$ for some $\d>0$.  Also, we obviously have
  $B_r=\cup_{q<r}\,B_q$ for any $r\in(0,\infty]$.  Therefore $(0,r)\sset
  R$ implies $(0,r]\sset R$.  The connectedness of $(0,\infty]$ then
      implies $R=(0,\infty]$, proving the theorem.

So fix $r\in R-\{\infty\}$; we will exhibit $\delta>0$ such that
$[r,r+\delta)\sset R$. By $r\in R$ we know that $\pi_1(B_r-\H,A)$ is
generated by the $\Loop{A H}$'s for which $d(A,H)<r$.  We abbreviate
$B_r$ to $B$ and define $S$ as the ``sphere'' $\partial B$.  For each
$p\in S$ there is an open ball $U_p$ centered at $p$ such that
$U_p\cap\H=U_p\cap\H_p$.  The cocompact action on $\partial A$ we used
in the previous paragraph is also cocompact on $S$.  Also, we may choose
the balls $U_p$ so that the set of all of them is preserved by this
action.  It follows that there exists $\d>0$ such that $B_{r+\d}$ is
covered by $B$ and all the $U_p$'s.

To prove $[r,r+\d)\sset R$ we suppose given some $r'\in(r,r+\d)$ and
  write $B'$ for $B_{r'}$.  Since $B'$ is covered by $B$ and the
  $U_p$'s, every mirror that meets $B'$ either meets $B$ or is tangent
  to $S$.  So we must prove that $\pi_1(B'-\H,A)$ is generated by
  $\pi_1(B-\H,A)$ and the $\Loop{AH}$'s with $H\in\M$ tangent to~$S$.
  Lemma~\ref{lem-generators-for-pi-1-of-ball-plus-small-bulge} says
  that $\pi_1\bigl((B\cup U_p)-\H,A\bigr)$ is generated by
  $\pi_1(B-\H,A)$, supplemented by $\Loop{AH}$ if $p$ is the point of
  tangency of $S$ with some $H\in\M$.  

For $p\in S$ we define $V_p=(B\cup U_p)\cap B'$.  It is easy to
see that the inclusion $ V_p-\H\to(B\cup U_p)-\H$ is a homotopy equivalence.
(Retract points of $U_p-B'$ along geodesics toward~$p$.)  So 
$\pi_1(V_p-\H,A)$ is generated by
  $\pi_1(B-\H,A)$, supplemented by $\Loop{AH}$ if $p$ is the point of
  tangency of $S$ with some $H\in\M$.  
Because $B'=\cup_{p\in S}V_p$, 
repeatedly using van Kampen's theorem shows that $\pi_1(B'-\H,A)$ is
generated by the images therein of all the $\pi_1(V_p-\H,A)$,
finishing the proof.  

This use of van Kampen's theorem requires checking that every set got
from the $V_p$'s by repeated unions and intersections is connected.  To
help verify this, we call a subset $Y$ of $X$ star-shaped (around $A$)
if it contains $A$ and the geodesics $\geodesic{A y}$ for all $y\in
Y$.  The lemma below shows that each $B\cup U_p$ is star-shaped.
Intersecting with $B'$ preserves star-shapedness and yields~$V_p$.
Since unions and intersections of star-shaped sets are again
star-shaped, our repeated application of van Kampen's theorem is
legitimate.
\end{proof}

\begin{lemma}
\label{lem-CAT-0}
In the notation of the previous proof, $B\cup U_p$ is star-shaped
around~$A$.
\end{lemma}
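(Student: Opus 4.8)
The plan is to ignore $\H$ completely — the condition $U_p\cap\H=U_p\cap\H_p$ plays no role in star-shapedness — and argue purely from the CAT(0) geometry of $A$, $B=B_r$ and $U_p$. Unwinding the definition, I must check that $A\subseteq B\cup U_p$, which is clear since $A\subseteq B$, and that for every $y\in B\cup U_p$ the geodesic $\overline{A y}=\overline{b y}$, where $b$ is the point of $A$ nearest $y$, is contained in $B\cup U_p$. When $y\in B$ this is immediate: for $z\in\overline{b y}$ one has $d(z,A)\le d(z,b)\le d(y,b)=d(y,A)<r$, so $\overline{b y}\subseteq B$. Hence the only real case is $y\in U_p$ with $d(y,A)\ge r$.

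In that case I would parametrize $\overline{b y}$ by arc length starting at $b$. Since $b$ is the nearest point of $A$ to $y$, it is also the nearest point of $A$ to every point of $\overline{b y}$ — otherwise one could shorten $\overline{b y}$ — so $d(\cdot,A)$ increases at unit speed along $\overline{b y}$. Let $q$ be the point of $\overline{b y}$ at distance $r$ from $b$. Then $\overline{b y}=\overline{b q}\cup\overline{q y}$, the segment $\overline{b q}$ lies in the closed $r$-neighborhood of $A$ with only its endpoint $q$ lying outside the open set $B$, and every point of $\overline{q y}$ is at distance at least $r$ from $A$, hence outside $B$. So everything reduces to showing $\overline{q y}\subseteq U_p$; and since a metric ball in a CAT(0) space is convex and $y\in U_p$, it suffices to prove $q\in U_p$, i.e.\ that $d(q,p)$ is less than the radius of $U_p$.

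Bounding $d(q,p)$ well enough is the step I expect to be the crux. A crude triangle-inequality estimate only yields $d(q,p)\le 2\,d(y,p)$, which is not good enough, because it would only put $q$ in a ball twice as large as $U_p$ rather than in $U_p$ itself. The remedy is to recognize $q$ as the nearest-point projection of $y$ onto the closed convex set $\bar B:=\{x\in X:d(x,A)\le r\}$. This set is convex because $d(\cdot,A)$ is a convex function on the CAT(0) space $X$, and complete because it is closed; and $q\in\bar B$ realizes $d(y,\bar B)$, since $d(y,q)=d(y,b)-r=d(y,A)-r$ while every $x\in\bar B$ has $d(y,x)\ge d(y,A)-d(x,A)\ge d(y,A)-r$. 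As $p\in\bar B$ is fixed by this projection and the projection onto a complete convex subset of a CAT(0) space does not increase distances \cite[Prop.~II.2.4]{Bridson-Haefliger}, we get $d(q,p)\le d(y,p)$, which is less than the radius of $U_p$. Therefore $q\in U_p$, hence $\overline{q y}\subseteq U_p$ by convexity, and $\overline{b y}=\overline{b q}\cup\overline{q y}\subseteq B\cup U_p$, as required. The remaining ingredients — convexity of $\bar B$ and of balls, the fact that $b$ is nearest to every point of $\overline{b y}$, and the identification of $q$ with the projection — are all routine.
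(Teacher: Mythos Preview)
Your proof is correct and follows the same outline as the paper's: reduce to showing that the point $q$ where $\overline{Ay}$ exits $B$ lies in $U_p$, via the inequality $d(q,p)\le d(y,p)$, and then use convexity of $U_p$. The only difference is in how that inequality is obtained: the paper argues directly with a comparison triangle (the angle at $q$ in the triangle $p,y,q$ is at least $\pi/2$, so $\overline{pq}$ is no longer than $\overline{py}$), whereas you package the same idea by recognizing $q$ as the nearest-point projection of $y$ onto $\bar B$ and invoking the nonexpansiveness of that projection from \cite[Prop.~II.2.4]{Bridson-Haefliger}.
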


\begin{proof}
We must show that $y\in U_p$ implies $\geodesic{A y}\sset B\cup U_p$.
It suffices to prove $\geodesic{y z}\sset U_p$, where $z$ is the point
of $\partial B$ closest to $y$.  (We remarked above that the convexity of
$A$ implies that of $B$, and the uniqueness of  $z$ then follows from
\cite[Prop.\ II.2.4]{Bridson-Haefliger}.)  Note that $\geodesic{z p}$
lies in the closure of $B$, since $z$ and $p$ do and $B$ (hence its
closure) is convex.

Consider the triangle $p,y,z$ in $X$ and a comparison triangle,
meaning a triangle $p',y',z'$ in $\R^2$ with the same edge lengths.
We write $\theta$ for the angle between $\geodesic{z y}$ and
$\geodesic{z p}$ at $z$, and similarly for $\theta'$.  
Since $X$ is a CAT(0) metric space 
we have
$\theta'\geq\theta$ by \cite[Prop.~II.3.1]{Bridson-Haefliger}.  And we have
$\theta\geq\pi/2$, because otherwise we could shorten $\geodesic{y z}$
by moving $z$ toward $p$.  
Therefore $\theta'$ is the largest
angle of the comparison triangle, so $\geodesic{p' y'}$ is its longest
edge.  Since the two triangles have the same edge lengths,
$\geodesic{p z}$ is shorter than $\geodesic{p y}$, so 
$z\in U_p$.  Then $U_p$'s
convexity shows that all of $\geodesic{y z}$ lies in $U_p$.
\end{proof}

\section{Loops in quotients of arrangement complements}
\label{sec-loops-in-quotients}

\noindent
We continue using the notation $X$, $\M$ and $\H$ from the previous section.
We also suppose a group $\PG$ acts  isometrically and
properly discontinuously on $X$, preserving $\H$.  At this point we
have no group $\Gamma$ in mind; the notation $\PG$ is just for
compatibility with sections~\ref{sec-background}--\ref{sec-Leech-meridians-generate}.  Our goal is to understand the
orbifold fundamental group of $(X-\H)/\PG$.  We use the following
definition from \cite{Looijenga-Artin-groups} and \cite{Basak-bimonster-2}; more general formulations
exist \cite{Ratcliffe}\cite{Kapovich}.

Fixing a basepoint $a\in X-\H$, consider the set of
pairs $(\gamma,g)$ where $g\in\PG$ and $\gamma$ is a path in $X-\H$
from $a$ to $g(a)$.  We regard one such pair as equivalent to another
one $(\gamma',g')$ if $g=g'$ and $\gamma$ and $\gamma'$ are homotopic
in $X-\H$, rel endpoints.   The orbifold
fundamental group
$
G_a:=\piorb\bigl((X-\H)/\PG,a\bigr)
$
means the set of equivalence classes. 
The group operation
is 
$(\gamma,g)\cdot(\gamma',g')=(\hbox{$\gamma$ followed by
  $g\circ\gamma'$},g g')$.
Projection of $(\gamma,g)$ to $g$ defines a homomorphism $G_a\to\PG$.
It is surjective because $X-\H$ is connected.
The kernel is obviously $\pi_1(X-\H,a)$, yielding the exact sequence
\begin{equation}
\label{eq-exact-sequence-on-pi-1}
1\to\pi_1(X-\H,a)\to G_a\to\PG\to1
\end{equation}

Although we don't need it, we remark that if $a$ has trivial $\PG$
stabilizer then there is a simpler $\PG$-invariant description of the
orbifold fundamental group.  Writing $o$ for $a$'s orbit, we define
$G_o:=\piorb\bigl((X-\H)/\PG,o\bigr)$ as the set of $\PG$-orbits on
the homotopy classes (rel endpoints) of paths in $X-\H$ that begin and
end in $o$.  The $\PG$-action is the obvious one: $g\in\PG$ sends a
path $\gamma$ to~$g\circ\gamma$.  To define $\gamma\gamma'$, where
$\gamma,\gamma'\in G_o$, one translates $\gamma'$ so that it begins
where $\gamma$ ends, and then composes paths in the usual way.
Well-definedness of multiplication, and the identification with the
definition of $G_a$, uses the fact that every path starting in $o$ has
a unique translate starting at $a$.

A complex reflection means a finite-order isometry of $X$ whose
fixed-point set is a complex hyperplane, called its mirror.  In our
applications, $\PG$ is generated by complex reflections whose mirrors
are hyperplanes in~$\M$.  This leads to certain natural elements of
the orbifold fundamental group: for $H\in\M$ we next define a loop
$\mu_{a,H}\in G_a$ which is a fractional power of $\Loop{a H}$.  
(The meridians of conjecture~\ref{conj-monstrous-proposal} and theorems~\ref{thm-announcement}--\ref{thm-leech-generation} are
special cases of these loops.)
Write
$p$ for the point of $H$ closest to~$a$ and $n_H$ for the order of the
cyclic group generated by the complex reflections in $\PG$ with mirror
$H$.  Write $R_H$ for the isometry of $X$ that fixes $H$ pointwise and
acts on its normal bundle by $\exp(2\pi i/n_H)$.  This is a complex
reflection and lies in $\PG$, except when $H$ is not the mirror of any
 reflection in $\PG$.  In that case, $R_H$ is the identity map.

Recall that the definition of $\Loop{a H}$ involved a point $c$ of
$\geodesic{a p}$ very near $p$, and a circular loop in
$\complexgeodesic{a p}$ centered at $p$ and based at~$c$.  We define
$\mu_{a,H}$ as $\dodge{a c}$ followed by the first $(1/n_H)$th of this
loop (going from $c$ to $R_H(c)$), followed by $R_H(\reverse(\dodge{a
  c}))$.  (One can see such a path in figure~\ref{fig-illustration-of-mu}, although the
notation there is intended for a more elaborate situation considered
below.  The portion of the path in the figure that goes from $b$ to $R_H(b)$
is $\mu_{b,H}$.)
This is a path from $a$ to $R_H(a)$, so the pair $(\mu_{a,H},R_H)$ is
an element of the orbifold fundamental group $G_a$.  Using the
definition of multiplication, 
the first component of
$(\mu_{a,H},R_H)^{n_H}$ is the path got by following $\mu_{a,H}$, then
$R_H(\mu_{a,H})$, then $R_H^2(\mu_{a,H}),\ldots$ and finally
$R_H^{n_H-1}(\mu_{a,H})$.  
It is easy to see that this is homotopic to $\Loop{a H}$.
So we have $(\mu_{a,H},R_H)^{n_H}=\Loop{a H}$.

\medskip
At this point we have defined everything in the statement of
theorem~\ref{thm-not-quite-strong-enough}.  But before proving it, we will adapt our construction to
accomodate the ``fat basepoints'' of the previous section.  This is
necessary for our application to $\ch^{13}$.  
So we fix $A$ as in
section~\ref{sec-loops-in-arrangement-complements}, and assume it contains our basepoint
$a$.  We will use $A$ as the base ``point'' when discussing
$\pi_1(X-\H)$, and $a$ as the basepoint when discussing
$\piorb\big((X-\H)/\PG\bigr)$.  In particular, the left term of
\eqref{eq-exact-sequence-on-pi-1} could also be written $\pi_1(X-\H,A)$.  The analogue of
$\mu_{a,H}$ is defined as follows, in terms of the point $b$ of $A$
that is closest to $H$.  We define $\mu_{a,A,H}$ to be $\geodesic{ab}$
followed by $\mu_{b,H}$ followed by $R_H(\geodesic{b a})$.  See
figure~\ref{fig-illustration-of-mu} for a picture.  Essentially the same argument as before
shows that $(\mu_{a,A,H},R_H)^{n_H}=\Loop{AH}\in\pi_1(X-\H,A)$.

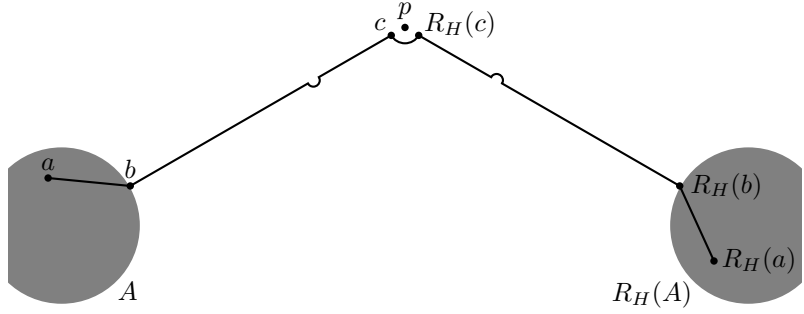
\begin{figure}
\psset{unit=30pt}
\def\URx{5}
\def\URy{.5}
\def\LLx{-\URx}
\def\LLy{-3.7}
\begin{pspicture*}(\LLx,\LLy)(\URx,\URy)
\def\px{0}
\def\py{0}
\def\pradius{.2}
\def\dotradius{.06}
\def\RAx{4.33}
\def\RAy{-2.5}
\def\Aradius{1}
\def\ax{-4.5}
\def\ay{-1.9}
\def\Rdetourx{1.155}
\def\Rdetoury{-.667}
\def\detourradius{.08}
\def\Rbx{3.464}
\def\Rby{-2}
\def\Rcx{.173}
\def\Rcy{-.1}
\def\Rax{3.896}
\def\Ray{-2.947}
\psline( \RAx, \RAy)( \px, \py)(-\RAx, \RAy)
\pscircle[linestyle=none,fillstyle=solid,fillcolor=gray](\RAx,\RAy){\Aradius}
\pscircle[linestyle=none,fillstyle=solid,fillcolor=gray](-\RAx,\RAy){\Aradius}
\pscircle[linestyle=none,fillstyle=solid,fillcolor=white](\px,\py){\pradius}
\pscircle[linestyle=none,fillstyle=solid,fillcolor=white](\Rdetourx,\Rdetoury){\detourradius}
\pscircle[linestyle=none,fillstyle=solid,fillcolor=white](-\Rdetourx,\Rdetoury){\detourradius}
\psarc[linestyle=solid,fillstyle=none]( \Rdetourx,\Rdetoury){\detourradius}{330}{150}
\psarc[linestyle=solid,fillstyle=none](-\Rdetourx,\Rdetoury){\detourradius}{210}{30}
\psarc[linestyle=solid,fillstyle=none](\px,\py){\pradius}{210}{330}
\psline( \Rax, \Ray)( \Rbx, \Rby)
\psline( \ax, \ay)(-\Rbx, \Rby)
\pscircle[linestyle=none,fillstyle=solid,fillcolor=black]( \ax ,  \ay){\dotradius}
\uput{3pt}[90](  \ax,  \ay){$a$}
\pscircle[linestyle=none,fillstyle=solid,fillcolor=black](-\Rbx, \Rby){\dotradius}
\uput{3pt}[90](-\Rbx, \Rby){$b$}
\pscircle[linestyle=none,fillstyle=solid,fillcolor=black](-\Rcx, \Rcy){\dotradius}
\uput{3pt}[135](-\Rcx, \Rcy){$c$}
\pscircle[linestyle=none,fillstyle=solid,fillcolor=black]( \px, \py){\dotradius}
\uput{3pt}[90]( \px, \py){$p$}
\pscircle[linestyle=none,fillstyle=solid,fillcolor=black]( \Rcx, \Rcy){\dotradius}
\uput{2pt}[30]( \Rcx, \Rcy){$R_H(c)$}
\pscircle[linestyle=none,fillstyle=solid,fillcolor=black]( \Rbx, \Rby){\dotradius}
\uput{4pt}[0]( \Rbx, \Rby){$R_H(b)$}
\pscircle[linestyle=none,fillstyle=solid,fillcolor=black]( \Rax, \Ray){\dotradius}
\uput{3pt}[0]( \Rax, \Ray){$R_H(a)$}
\uput{30pt}[-45](-\RAx,\RAy){$A$}
\uput{30pt}[-135](\RAx,\RAy){$R_H(A)$}
\end{pspicture*}
\caption{The path $\mu_{a,A,H}$ goes from left to right.  Here 
  $R_H$ is the complex reflection of order~$3$, acting by counter-clockwise
  rotation by $2\pi/3$.   The hyperplane $H$ is not shown
  except for its point~$p$ closest to~$A$. The small semicircles
  indicate that the path from $b$ to $c$ may detour around
  some points of $\H$.}
\label{fig-illustration-of-mu}
\end{figure}

In applications one typically has some distinguished set of
$\mu_{a,H}$'s or $\mu_{a,A,H}$'s in mind and wants to prove that they
generate $G_a$.  Theorem~\ref{thm-not-quite-strong-enough} in the
introduction is a result of this sort, and the rest of the section is
devoted to proving it.  The following lemma is really
the inductive step in the proof, so the reader might prefer to read
the theorem's proof first.  Also, theorem~\ref{thm-not-quite-strong-enough} uses only
case~\eqref{item-can-move-A-closer-to-p} of the lemma; the other cases
are for our application to $\ch^{13}$ in section~\ref{sec-Leech-meridians-generate}.

\begin{lemma}
\label{lem-lowering-lemma}
Suppose $\calC\sset\M$ are the hyperplanes closest to~$A$,
and let $G$ be the subgroup of
$G_a=\piorb\bigl((X-\H)/\PG,a\bigr)$ generated by the $(\mu_{a,A,C},R_C)$ with
$C\in\calC$.  
Suppose $H\in\M$, write $p$
for the closest point of $H$ to $A$, $r$ for $d(A,p)$, and $B$ for the
open $r$-neighborhood of $A$.  Suppose  $G$ contains
$\pi_1(B-\H,A)$ and that there exists a complex reflection $R\in\PG$ with
mirror in $\calC$, such 
that one of the following holds:
\begin{enumerate}
\item
\label{item-can-move-A-closer-to-p}
$R$ moves $A$ closer
to $p$.
\item
\label{item-can-move-A-closer-to-H-and-no-further-from-A}
$R$ moves $A$ closer
to $H$, and no farther from $p$.
\item
\label{item-more-complicated-move-closer-condition}
There exists
an open ball $U$ around $p$ such that
$U\cap\H=U\cap\H_p$, $B\cap R(B)\cap U\neq\emptyset$, and $R(B)\cap
U\cap H\neq\emptyset$.
\end{enumerate}
Then $G$ contains $\Loop{AH}$.
\end{lemma}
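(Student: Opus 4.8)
The plan is to reduce each of the three cases to a statement about how $\Loop{AH}$ changes under applying the reflection $R$, and then use the assumption that $G\supseteq\pi_1(B-\H,A)$ together with $R\in G$ (which follows since $R=R_C$ for a mirror $C\in\calC$, and $(\mu_{a,A,C},R_C)\in G$). The key observation is that if $g\in\PG$ and $g\in G$ (meaning the pair $(\gamma,g)$ lies in $G$ for a suitable path $\gamma$), then $G$ is closed under the operation $\ell\mapsto g\cdot\ell\cdot g^{-1}$ on loops in $\pi_1(X-\H,A)$, where $g\cdot\ell\cdot g^{-1}$ is computed in $G_a$; concretely this conjugate is represented by $\gamma$ followed by $g(\ell)$ followed by $\reverse(\gamma)$. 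So it suffices to show, in each case, that $R(\Loop{AH})$ — transported back to the basepoint along an appropriate path lying in $B-\H$ — equals $\Loop{AH}$ modulo an element of $\pi_1(B-\H,A)\subseteq G$, or more precisely that $\Loop{AH}$ lies in the subgroup generated by $\pi_1(B-\H,A)$ and such conjugates.

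First I would handle case~\eqref{item-can-move-A-closer-to-p}. Here $R$ moves $A$ strictly closer to $p$, so $d(R(A),p)<r$, hence $d(A,R^{-1}(p))<r$, i.e. the point $R^{-1}(p)$ of the mirror $R^{-1}(H)$ lies inside the open ball $B$. Since $R^{-1}(H)$ passes through a point of $B$, actually $R^{-1}(H)$ meets $B$, so $\Loop{A,R^{-1}(H)}$ — being a loop that dodges along a geodesic from $A$ to a point near $R^{-1}(p)\in B$ and then circles — can be taken to lie in $B-\H$; thus $\Loop{A,R^{-1}(H)}\in\pi_1(B-\H,A)\subseteq G$. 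Now $R$ carries $\Loop{A,R^{-1}(H)}$ to a loop encircling $H$; after conjugating by the element of $G$ projecting to $R$ and then connecting the resulting basepoint $R(A)$ back to $A$ by a geodesic (which must be checked to avoid $\H$, or to be homotopic rel endpoints to a path avoiding $\H$ inside a suitable neighborhood), one obtains $\Loop{AH}$ up to the action of $\pi_1(B-\H,A)$. So $\Loop{AH}\in G$.

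For cases~\eqref{item-can-move-A-closer-to-H-and-no-further-from-A} and~\eqref{item-more-complicated-move-closer-condition} the same strategy applies but the geometric bookkeeping is more delicate, and this is where I expect the main obstacle to lie. In~\eqref{item-can-move-A-closer-to-H-and-no-further-from-A}, $R$ moves $A$ closer to $H$ but the \emph{nearest} point of $H$ may shift, so one cannot simply say $R^{-1}(p)\in B$; instead one argues that $R^{-1}(H)$ meets $B$ because $d(A,R^{-1}(H))=d(R(A),H)<d(A,H)=r$, while the ``no farther from $p$'' hypothesis controls the comparison of the relevant loop with $\Loop{AH}$ itself (the two circle the same hyperplane but the lassos may differ). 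In~\eqref{item-more-complicated-move-closer-condition} one works inside the ball $U$: the conditions $B\cap R(B)\cap U\neq\emptyset$ and $R(B)\cap U\cap H\neq\emptyset$ say exactly that after applying $R$, there is a local picture near $p$ in which a loop encircling $H$ can be built inside $R(B)\cap U$, pulled back to a loop inside $B\cap U$ by a path through the common region $B\cap R(B)\cap U$, and then recognized (using lemma~\ref{lem-generators-for-pi-1-of-ball-plus-small-bulge} applied with this $U$, comparing $\pi_1$ of $B$ with $\pi_1$ of $B$ together with the bump $U$) as generating $\Loop{AH}$ modulo $\pi_1(B-\H,A)$. The hard part throughout will be verifying that all the connecting geodesics and homotopies stay in the correct CAT(0) neighborhoods and hence in $X-\H$ — exactly the kind of convexity argument carried out in lemmas~\ref{lem-CAT-0} and~\ref{lem-generators-for-pi-1-of-ball-plus-small-bulge} — so I would structure the proof so that each case feeds into an application of lemma~\ref{lem-generators-for-pi-1-of-ball-plus-small-bulge}, with the three hypotheses \eqref{item-can-move-A-closer-to-p}--\eqref{item-more-complicated-move-closer-condition} being three sufficient conditions for that lemma's hypotheses to be met after translating by $R$.
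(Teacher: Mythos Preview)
Your overall strategy---conjugate by an element of $G$ projecting to $R$, and invoke lemma~\ref{lem-generators-for-pi-1-of-ball-plus-small-bulge}---is the paper's strategy, and your sketch for case~\eqref{item-more-complicated-move-closer-condition} is essentially the paper's argument. The gap is in your direct treatment of case~\eqref{item-can-move-A-closer-to-p}. After conjugating $\Loop{A,R^{-1}(H)}\in\pi_1(B-\H,A)$ by the meridian $(\mu_{a,A,C},R_C)^i\in G$ (note $R=R_C^i$, not necessarily $R_C$ itself), you obtain a loop in $G$ that encircles $H$---but the approach path of that loop runs through $R(B)$, not $B$, and it circles $H$ near the point of $H$ closest to $R(A)$, which need not be $p$ and need not lie in any small ball $U$ around $p$ with $U\cap\H=U\cap\H_p$. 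So you cannot write this loop as $\alpha\lambda\alpha^{-1}$ with $\alpha\sset B-\H$ ending in $U$, which is what lemma~\ref{lem-generators-for-pi-1-of-ball-plus-small-bulge} requires in order to conclude that it and $\pi_1(B-\H,A)$ together generate $\Loop{AH}$. Your parenthetical ``which must be checked'' is exactly the crux, and it does not check.

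The paper resolves this by reversing the logical order: it proves case~\eqref{item-more-complicated-move-closer-condition} first, then shows \eqref{item-can-move-A-closer-to-p}$\Rightarrow$\eqref{item-more-complicated-move-closer-condition} and \eqref{item-can-move-A-closer-to-H-and-no-further-from-A}$\Rightarrow$\eqref{item-more-complicated-move-closer-condition} by short geometric arguments (for \eqref{item-can-move-A-closer-to-p}, $p\in R(B)$, so any small $U$ works; for \eqref{item-can-move-A-closer-to-H-and-no-further-from-A}, $p\in\partial R(B)$, but non-tangency of $\partial B$ and $\partial R(B)$ at $p$ gives the needed nonempty intersections). In case~\eqref{item-more-complicated-move-closer-condition} the hypotheses are tailored so the argument goes through: split the meridian path as $\mu_1\mu_2$ with $\mu_1\sset B-\H$ ending at some $c\in (B\cap R(B))-\H$ and $\mu_2\sset R(B)-\H$; pick $y\in B\cap R(B)\cap U$, a path $\gamma\sset(B\cap R(B))-\H$ from $c$ to $y$, and a loop $\lambda\sset(R(B)\cap U)-\H$ at $y$ with linking number~$1$ with $H$. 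Then $\alpha:=\mu_1\gamma$ lies in $B-\H$ and ends in $U$, so lemma~\ref{lem-generators-for-pi-1-of-ball-plus-small-bulge} gives $\Loop{AH}\in\bigl\langle\pi_1(B-\H,A),\,\alpha\lambda\alpha^{-1}\bigr\rangle$. Finally one rewrites $\alpha\lambda\alpha^{-1}=(\mu_1\mu_2)(\mu_2^{-1}\gamma\lambda\gamma^{-1}\mu_2)(\mu_1\mu_2)^{-1}$: a $G$-conjugate of a loop lying entirely in $R(B)-\H$, whose $R^{-1}$-image therefore lies in $B-\H$ and hence in $G$. (A minor side correction: $R^{-1}(p)$ is the point of $R^{-1}(H)$ nearest $R^{-1}(A)$, not nearest $A$; your conclusion $\Loop{A,R^{-1}(H)}\in\pi_1(B-\H,A)$ is nevertheless correct, since $R^{-1}(H)$ does meet $B$.)
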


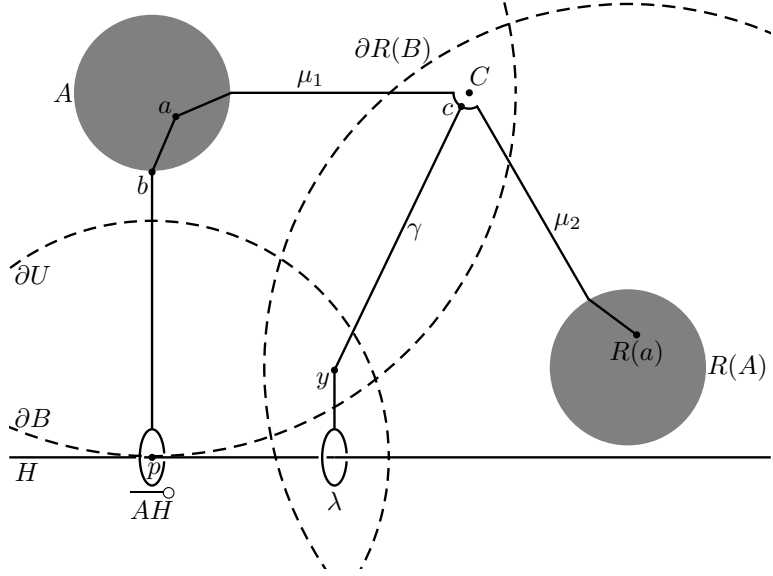
\begin{figure}
\psset{unit=30pt}
\def\URx{3.8}
\def\URy{1.2}
\def\LLx{-5.8}
\def\LLy{-6}
\begin{pspicture*}(\LLx,\LLy)(\URx,\URy)
\def\thinwidth{.03}
\psset{linewidth=\thinwidth}
\def\whiteellipseheight{.4}
\def\whiteellipsewidth{.2}
\def\blackellipseheight{.37}
\def\blackellipsewidth{.17}
\def\whitegap{.1}
\def\dotradius{.06}
\def\Lx{0}
\def\Ly{0}
\def\Lradius{.2}
\def\Ax{-4}
\def\Ay{0}
\def\Aradius{1}
\def\RAx{2}
\def\RAy{-3.464}
\def\Bradius{4.6}
\def\px{\Ax}
\def\py{-\Bradius}
\def\Uradius{3}
\def\ax{-3.7}
\def\ay{-.3}
\def\bx{\Ax}
\def\by{-\Aradius}
\def\yx{-1.7}
\def\yy{-3.5}
\def\abovepy{-4.23} 
\def\cx{-.1}\def\cy{-.173} 
\def\bx{\px}\def\by{-\Aradius} 
\def\gammax{-.85}\def\gammay{-1.75} 
\def\rightofAx{-3} 
\def\Rax{2.11}\def\Ray{-3.054} 
\def\topleftRAx{1.5}\def\topleftRAy{-2.6} 
\def\muonex{-2}\def\muoney{0} 
\def\mutwox{1}\def\mutwoy{-1.732} 
\psline(\LLx,\py)(\URx,\py)
\psellipse[linestyle=solid,linewidth=\whitegap,fillstyle=none,linecolor=white](\px,\py)(\whiteellipsewidth,\whiteellipseheight)
\psellipse[linestyle=solid,fillstyle=none,linecolor=black](\px,\py)(\blackellipsewidth,\blackellipseheight)
\psline[linewidth=\whitegap,linecolor=white](\px,\py)(\URx,\py)
\psline(\px,\py)(\URx,\py)
\psellipse[linestyle=solid,linewidth=\whitegap,fillstyle=none,linecolor=white](\yx,\py)(\whiteellipsewidth,\whiteellipseheight)
\psellipse[linestyle=solid,fillstyle=none,linecolor=black](\yx,\py)(\blackellipsewidth,\blackellipseheight)
\psline[linewidth=\whitegap,linecolor=white](\yx,\py)(\URx,\py)
\psline(\yx,\py)(\URx,\py)
\psline(\Ax,\Ay)(\px,\abovepy)
\psline(\yx,\yy)(\yx,\abovepy)
\pscircle[linestyle=none,fillstyle=solid,fillcolor=black](\px,\py){\dotradius}
\uput{2pt}[-80](  \px,  \py){$p$}
\pscircle[linestyle=none,fillstyle=solid,fillcolor=black](\yx,\yy){\dotradius}
\uput{2pt}[-140](  \yx,  \yy){$y$}
\uput{3pt}[-45](\LLx,\py){$H$}
\uput{12pt}[-90](\px,\py){$\Loop{AH}$}
\uput{13pt}[-90](\yx,\py){$\lambda$}
\psline( \Ax, \Ay)( \Lx, \Ly)(\RAx, \RAy)
\pscircle[linestyle=none,fillstyle=solid,fillcolor=gray](\Ax,\Ay){\Aradius}
\pscircle[linestyle=none,fillstyle=solid,fillcolor=gray](\RAx,\RAy){\Aradius}
\pscircle[linestyle=none,fillstyle=solid,fillcolor=white](\Lx,\Ly){\Lradius}
\psline(\ax,\ay)(\rightofAx,\Ay)
\psline(\topleftRAx,\topleftRAy)(\Rax,\Ray)
\pscircle[linestyle=none,fillstyle=solid,fillcolor=black](\ax,\ay){\dotradius}
\uput{2pt}[135](  \ax,  \ay){$a$}
\pscircle[linestyle=none,fillstyle=solid,fillcolor=black](\Rax,\Ray){\dotradius}
\uput{2pt}[-90](  \Rax,  \Ray){$R(a)$}
\psarc[linestyle=solid,fillstyle=none](\Lx,\Ly){\Lradius}{180}{300}
\pscircle[linestyle=none,fillstyle=solid,fillcolor=black](\Lx,\Ly){\dotradius}
\uput{3pt}[60](  \Lx,  \Ly){$C$}
\pscircle[linestyle=none,fillstyle=solid,fillcolor=black](\cx,\cy){\dotradius}
\uput{3pt}[190](  \cx,  \cy){$c$}
\uput{2pt}[90](\muonex,\muoney){$\mu_1$}
\uput{2pt}[30](\mutwox,\mutwoy){$\mu_2$}
\uput{30pt}[180](\Ax,\Ay){$A$}
\uput{30pt}[0](\RAx,\RAy){$R(A)$}
\psline(\ax,\ay)(\bx,\by)
\pscircle[linestyle=none,fillstyle=solid,fillcolor=black](\bx,\by){\dotradius}
\uput{2pt}[-135](  \bx,  \by){$b$}
\psline(\cx,\cy)(\yx,\yy)
\uput{2pt}[0](\gammax,\gammay){$\gamma$}
\pscircle[linestyle=dashed,fillstyle=none](\Ax,\Ay){\Bradius}
\uput{2pt}[0](\LLx,-4.1){$\partial B$}
\pscircle[linestyle=dashed,fillstyle=none](\RAx,\RAy){\Bradius}
\uput{2pt}[150](-.4,.4){$\partial R(B)$}
\pscircle[linestyle=dashed,fillstyle=none](\px,\py){\Uradius}
\uput{2pt}[0](\LLx,-2.3){$\partial U$}
\end{pspicture*}
\caption{Illustration for the proof of lemma~\ref{lem-lowering-lemma}.}
\label{fig-the-essential-homotopy}
\end{figure}

\begin{proof}
In every case we have $R(B)\cap H\neq\emptyset$, so $R^{-1}(H)$ is
closer to $A$ than $H$ is.  Therefore
$H\notin\calC$, or in other words:
 the hyperplanes in $\calC$ lie at
distance${}<r$ from $A$.
We will prove the
lemma under hypothesis \eqref{item-more-complicated-move-closer-condition}, and then show that the other two
cases follow.  We hope figure~\ref{fig-the-essential-homotopy} helps
the reader.  First we introduce the various objects pictured.  As
we did above, we
write $b$ for the point of $A$ closest to $H$.  Under our
identification of $\pi_1(X-\H,A)$ with $\pi_1(X-\H,a)$, the loop
$\Loop{AH}$ corresponds to $\geodesic{ab}$ followed by $\Loop{b H}$
followed by $\geodesic{b a}$.

The complex reflection $R$ equals $R_C^i$ for some $C\in\calC$.  The
point marked $C$ in the figure represents the point of $C$ nearest to
$A$.  It lies inside $B$ by the previous paragraph's remark that the elements of $\calC$
are closer to $A$ than $H$ is.  It also lies in $R(B)$, since $R$
fixes $C$ pointwise.

Consider the first component of $(\mu_{a,A,C},R_C)^i$.  
After a homotopy
it may be regarded as a path $\mu_1$ in $B-\H$ from $a$ to a
point $c\in \bigl(B\cap R(B)\bigr)-\H$, followed by a path $\mu_2$ in $R(B)-\H$ from
$c$ to $R(a)$.  These paths are marked in the figure.
So $(\mu_{a,A,C},R_C)^i=(\mu_1\mu_2,R)$ in $G_a$. 

The hypothesis that $B\cap R(B)\cap U\neq\emptyset$ is exactly what we
need to know that some point $y$ lies in this intersection, as drawn.
The connectedness of $U\cap R(B)$ and the hypothesis that $H$ meets
$U\cap R(B)$ are exactly what we need to construct a loop $\lambda$ in
$(R(B)\cap U)-\H$, based at $y$, with linking number~$1$ with $H$.
Finally, the connectedness of $B\cap R(B)$ allows us to construct a
path $\gamma$ in $(B\cap R(B))-\H$ from $c$ to~$y$.  This finishes the
construction of the objects in the figure.

Our goal is to prove that $G$ contains $\Loop{AH}$.
Lemma~\ref{lem-generators-for-pi-1-of-ball-plus-small-bulge} shows
that this loop lies in the subgroup of $\pi_1(X-\H,a)$ generated by
$\pi_1(B-\H,a)$ and $\mu_1\gamma\lambda\gamma^{-1}\mu_1^{-1}$.  This
uses our hypothesis $U\cap\H=U\cap\H_p$.  Since we assumed $G$
contains the image of $\pi_1(B-\H,a)$, it suffices to show that $G$
contains $\mu_1\gamma\lambda\gamma^{-1}\mu_1^{-1}$, or equivalently
the homotopic
loop
$(\mu_1\mu_2)\bigl(\mu_2^{-1}\gamma\lambda\gamma^{-1}\mu_2\bigr)(\mu_2^{-1}\mu_1^{-1})$.

An element of the orbifold fundamental group $G_a$ is really a pair,
so we must prove
$\bigl((\mu_1\mu_2)(\mu_2^{-1}\gamma\lambda\gamma^{-1}\mu_2)(\mu_2^{-1}\mu_1^{-1}),1\bigr)\in G$.  One
checks that this
equals
$$(\mu_1\mu_2,R)\cdot\Bigl(R^{-1}\bigl(\mu_2^{-1}\gamma\lambda\gamma^{-1}\mu_2\bigr),1\Bigr)\cdot\Bigl(R^{-1}\bigl(\mu_2^{-1}\mu_1^{-1}\bigr),R^{-1}\Bigr)$$
The last term is the inverse of the first, which $G$ contains by
definition.
So it suffices
to show that the middle term lies in $G$, which is easy: 
the loop $\mu_2^{-1}\gamma\lambda\gamma^{-1}\mu_2$ lies in $R(B)-\H$, so its image
under $R^{-1}$ lies in $B-\H$.  This finishes case \eqref{item-more-complicated-move-closer-condition}.

Next we claim that \eqref{item-can-move-A-closer-to-p} implies \eqref{item-more-complicated-move-closer-condition}.  Take $U$ to be any ball
around $p$ with $U\cap\H=U\cap\H_p$.  
Then the remaining hypotheses of
\eqref{item-more-complicated-move-closer-condition} follow immediately from $p\in R(B)$.  

Finally we claim that \eqref{item-can-move-A-closer-to-H-and-no-further-from-A} implies \eqref{item-more-complicated-move-closer-condition}.  By the previous
paragraph it suffices to treat the case that $p\in\partial R(B)$.
Take $U$ to be any ball around $p$ with $U\cap\H=U\cap\H_p$.  The
hypothesis $d(R(A),H)<r$ says that $H$ is not orthogonal to
$\geodesic{R(A),p}$.  
It follows that $R(B)$ contains elements of $H$
arbitrarily close to $p$, so $U\cap R(B)\cap H\neq\emptyset$.    
Similarly, $d(R(A),H)<r$ implies the non-tangency of
$\partial B$ and $\partial
R(B)$ at $p$.  From this it follows that
$B\cap R(B)$ has elements arbitrarily close to $p$, hence in $U$.
This finishes the proof.
\end{proof}

\begin{proof}[Proof of theorem~\ref{thm-not-quite-strong-enough}]
We will apply  lemma~\ref{lem-lowering-lemma} with $A=\{a\}$, noting that $\mu_{a,A,H}=\mu_{a,H}$ for
all $H\in\M$. 
Write $G$ for the subgroup of $G_a$ generated by the $(\mu_{a,C},R_C)$'s.
By the exact sequence \eqref{eq-exact-sequence-on-pi-1} and the assumed surjectivity $G\to\PG$,
it suffices to show that $G$ contains $\pi_1(X-\H,a)$.  By
theorem~\ref{thm-generators-for-X-minus-hyperplanes} it suffices to show that it contains every $\Loop{a H}$.
We do this by induction on $d(a,H)$.  

The base case is $H\in\calC$,
for which we use the fact that $\Loop{a H}$ is a power of $(\mu_{a,H},R_H)$.
So suppose $H\in\M-\calC$ and set $r:=d(a,H)$.  We may assume, by
theorem~\ref{thm-generators-for-metric-neighborhood-of-A} and the inductive hypothesis, that $G$ contains
$\pi_1(B-\H,a)$, where $B$ is the open $r$-neighborhood of $a$.  Then
case \eqref{item-can-move-A-closer-to-p} of lemma~\ref{lem-lowering-lemma}  shows that $G$ also contains
$\Loop{a H}$, completing the inductive step.
\end{proof}

\section{A monstrous(?) hyperplane arrangement}
\label{sec-background}

\noindent
In this section we give background information on the
conjecturally-monstrous hyperplane arrangement in $\ch^{13}$ which is
the subject of conjecture~\ref{conj-monstrous-proposal} and theorems \ref{thm-announcement} and~\ref{thm-leech-generation}.  For more information, see
\cite{Allcock-monstrous}\cite{Basak-bimonster-1}\cite{Basak-bimonster-2}\cite{Allcock-y555}\cite{Heckman}\cite{Heckman-Rieken}.

We write $\C^{n,1}$ for a complex vector space equipped with a
Hermitian form $\ip{\cdot}{\cdot}$ of signature $(n,1)$,  
assumed linear in its first argument and antilinear in its second.  The {\it
  norm} $v^2$ of a vector $v$ means $\ip{v}{v}$.  Complex
hyperbolic space $\chn$ means the set of negative-definite
1-dimensional subspaces.  If $V,W\in\ch^n$ are represented by
vectors $v,w$ then their hyperbolic distance is
\begin{equation}
\label{eq-distance-formula-for-points}
d(V,W)=\cosh^{-1}\sqrt{\frac{\bigl|\ip{v}{w}\bigr|^2}{v^2 w^2}}
\end{equation}
If $s$ is a vector of positive norm, then $s^\perp\sset\C^{n,1}$
defines a hyperplane in $\chn$, also written $s^\perp$, and
\begin{equation}
\label{eq-distance-formula-for-point-and-mirror}
d(V,s^\perp)=\sinh^{-1}\sqrt{-\frac{\bigl|\ip{v}{s}\bigr|^2}{v^2 s^2}}
\end{equation}
These formulas are from \cite{Goldman}, 
up to an unimportant factor of~$2$.

A null vector means a nonzero vector of norm~$0$.
If $v$ is one then it represents a point $V$ of the
boundary $\partial\chn$.
For any vector $w$ of non-zero norm  we define the {\it height} of $w$ with
respect to $v$ by
\begin{equation}
\label{eq-definition-of-height}
\height_v(w):=-\frac{\bigl|\ip{v}{w}\bigr|^2}{w^2}.
\end{equation}
This function is invariant under rescaling $w$, so it descends to a
function on $\chn$, which is positive.  The horosphere
centered at $V$, of height~$h$ 
with respect to $v$, means the set of $p\in\chn$ with
$\height_v(p)=h$.  We define open and closed horoballs the same way,
replacing $=$ by $<$ and $\leq$.  (More abstractly, one can define
horospheres as the orbits of the unipotent radical of the
$\PU(n,1)$-stabilizer of $V$.)

We think of $V$ as the center of these horospheres and horoballs and
$h$ as a sort of generalized radius, even though strictly speaking the
distance from any point of $\chn$ to $V$ is infinite.  In particular,
if $p,p'\in\chn$ then we say that $p$ is closer to $V$ than $p'$ is,
if $\height_v(p)<\height_v(p')$.  To see that this notion depends on
$V$ rather than $v$, one checks that replacing $v$ by a nonzero scalar
multiple of itself does not affect this inequality.  (It multiplies
both sides by the same positive number.)  Another way to think about
this, at least for points outside some fixed closed horoball~$A$
centered at $V$, is to
regard ``closer to~$V$'' as alternate language for ``closer to~$A$''.  In
any case, in our application there will be a canonical choice for $v$,
up to roots of unity.

\medskip Next we will describe the hyperplane arrangement appearing in
conjecture~\ref{conj-monstrous-proposal} and theorems~\ref{thm-announcement}--\ref{thm-leech-generation}.  We write $\w$ for a
primitive cube root of unity and define the Eisenstein integers $\E$
as $\Z[\w]$.  The Eisenstein integer $\w-\wbar=\sqrt{-3}$ is so
important that it has its own name~$\theta$.  An {\it $\E$-lattice}
means a free $\E$-module $L$ equipped with a Hermitian form taking
values in $\E\tensor\Q=\Q(\sqrt{-3})$, denoted $\ip{\cdot}{\cdot}$.
Sometimes we think of lattice elements as column vectors and
$\ip{\cdot}{\cdot}$ as specified by a matrix $M$ equal to the
transpose of its complex conjugate.  Then $\ip{v}{w}=v^T M\bar{w}$.

We will describe two $\E$-lattices, from \cite{Allcock-y555} and \cite{Allcock-Inventiones}.  Each
has signature $(13,1)$ and is equal to $\theta$ times its dual
lattice.  By \cite{Basak-bimonster-1} there is only one lattice with
these properties, so we may regard them as two different descriptions
of the same lattice~$L$.  We will not actually use this uniqueness in
this paper, and the first description of~$L$ is presented only to make
precise the statement of theorem~\ref{thm-announcement}.

The definitions of $\PG$ and $\M$, and some language we will use, are
independent of the model.  We regard $L\tensor_\E\C$ as a
copy of $\C^{13,1}$, and take $\G$ to be the group of $\E$-linear
automorphisms of $L$ that preserve the inner product.  As usual, $\PG$
means the quotient by its subgroup of scalars.  A {\it root} means a
norm~$3$ lattice vector, the hyperplane arrangement $\M$ consists of
the orthogonal complements in $\ch^{13}$ of the roots, and $\H$ means
the union of these hyperplanes.  The subject of conjecture~\ref{conj-monstrous-proposal} is the
orbifold fundamental group of $(\ch^{13}-\H)/\PG$.

The special role of norm~$3$ vectors, and the name ``root'', arises
as follows.  First let $s\in\C^{13,1}$ be any vector of positive norm.
Then the linear map
\begin{equation*}
x\mapsto x-(1-\w)\frac{\ip{x}{s}}{s^2}s
\end{equation*}
is an isometry of $\ip{}{}$, called the {\it $\w$-reflection in $s$}
and denoted $R_s$.  Replacing $\w$ by $\wbar$ gives the
$\wbar$-reflection in $s$, which is the inverse of $R_s$.  They are
called triflections, because they are complex reflections of
order~$3$.  To see that $R_s$ is a complex reflection (in particular
an isometry) one checks that it fixes $s^\perp$ pointwise and
multiplies $s$ by $\w$.  

In the special case that $s$ is a root, $R_s$
preserves~$L$ because of a conspiracy among the coefficients.  First,
the factor $(1-\w)$ is a unit multiple of $\theta=\sqrt{-3}$.  Second,
for any lattice vector $x$, $\ip{x}{s}$ is divisible by $\theta$,
since all inner products in $L$ are.  (This is what it means for $L$
to lie in $\theta$ times its dual lattice.)  Together these two
factors of $\theta$ cancel the $s^2=3$ term in the denominator, up to
a unit.  So $R_s(x)$ is an $\E$-linear combination of $x$ and $s$,
hence lies in $L$.  When one has a reflection in mind (real or
complex), it is customary to call a vector orthogonal to its
fixed-point set a root.  When one also has a lattice in mind, one usually
fixes the scale of a root by requiring it to be a primitive lattice
vector.  This is why we call norm~$3$ vectors roots.  One can show
that no other elements of $\PG$ act on $\ch^{13}$ by complex
reflections.  (The analogous result for unimodular $\E$-lattices is
contained in lemmas~8.1--8.2 of \cite{Allcock-New}; for the current case one
uses the fact that $L$ is equal to
$\theta$ times its dual lattice, rather than merely lying in it.)  So
$\M$ is exactly the set of mirrors of the complex reflections in
$\PG$, making $\piorb\bigl((\ch^{13}-\H)/\PG\bigr)$ a braid-like group
in the sense of this paper.

The second author showed in \cite[Lemma~3.3]{Basak-bimonster-2} that the
triflections in a particular set of~$14$ roots generate $\G$.  The first author showed in \cite{Allcock-Inventiones} that all roots are
equivalent under $\G$.

\medskip
Our first description of $L$ is the ``$P^2\F_3$ model'' from
\cite{Allcock-y555}.  As mentioned above, we include it only to give
precise meaning to theorem~\ref{thm-announcement}, and we will not
refer to it later. We start with the diagonal inner product matrix
$[-1;1,\dots,1]$ on $\C^{13,1}$, and regard the last $13$ coordinates
as being indexed by the $13$ points of $P^2\F_3$.  The ``point roots''
are the vectors of the form $(0;\theta,0^{12})$ with the $\theta$ in
any of last~$13$ positions.  The ``line roots'' are the vectors of the
form $(1;1,1,1,1,0,\dots,0)$, with $1$'s in positions corresponding to
the points of a line in $P^2\F_3$.  $L$ is defined as the span of
these~$26$ roots.  This construction obviously has $\PGL_3(\F_3)$
symmetry, and it also has less-obvious symmetries exchanging the point
roots and line roots (up to scalars).  This yields a subgroup
$\PGL_3(\F_3)\rtimes\Z/2$ of $\PG$ that acts transitively on these
$26$ roots.  We take $\tau$ in theorem~\ref{thm-announcement} to be
the unique point of $\ch^{13}$ invariant under this group; its
coordinates are $(4+\sqrt3;1,\dots,1)$.  Among other results, it was
shown in \cite{Basak-bimonster-1}, Prop.\ 1.2,  that the hyperplanes in
$\M$ that are closest to $\tau$ are exactly the mirrors of the point
and line roots.  We have now described concretely all the objects in
theorem~\ref{thm-announcement}.

\medskip
Now we give the ``Leech model'' of $L$ from \cite{Allcock-Inventiones}, and make concrete the
objects in theorem~\ref{thm-leech-generation}.  We will use this model for the rest of the
paper.  We define $L$ as the $\E$-lattice $\Lambda\oplus\cell$, where
$\Leech$ is the complex Leech lattice at the smallest scale at which
all inner products lie in~$\E$.  The complex Leech lattice is studied
in detail in \cite{Wilson}; the scale used there is the most natural
one for coordinate computations in it, and has minimal norm~$9$.  At
our scale it has minimal norm~$6$, all inner products are divisible
by~$\theta$, and $\Lambda$ is equal to $\theta$ times its dual
lattice.  It is called the complex Leech lattice because its
underlying real lattice is a scaled copy of the usual (real) Leech
lattice described in \cite{Conway}.  The properties of $\Lambda$ that we
will use are that its automorphism group is transitive on its vectors
of norms~$6$ and~$9$, and that its covering radius is~$\sqrt3$.  The
transitivity is proven in \cite{Wilson}.  The meaning of the
covering radius is that closed balls of that radius, centered at
lattice points, exactly cover Euclidean space.  It is $\sqrt3$ because
the real Leech lattice, at its own natural scale, has minimal norm~$4$ and
(by \cite{Conway-Parker-Sloane}) covering radius~$\sqrt2$.

We will write vectors of $L$ in the form $(x;y,z)$, where
$x\in\Lambda$ and $y,z\in\E$.

Conceptually, our ``basepoint'' for the description of generators for
$\piorb\bigl((\ch^{13}-\H)/\PG\bigr)$ in theorem~\ref{thm-leech-generation} is the cusp of
$\PG$ represented by the null vector $\rho:=(0;0,1)$.  As explained in
the introduction, really this is a shorthand for choosing a ``fat
basepoint'': a closed horoball $A$ centered at $\rho$ that misses
$\H$.  The following lemma shows that such a horoball exists.  More
precisely, it identifies the largest open horoball centered at $\rho$
that misses $\H$; we may take $A$ to be any closed horoball inside it.
We also fix a basepoint $a\in A$, so now the paths $\mu_{a,A,H}$ in
theorem~\ref{thm-leech-generation} have been defined.

\begin{lemma}
\label{lem-disjoint-horoball-exists}
The open horoball
$\bigl\{p\in\ch^{13}\bigm|\height_\rho(p)<1\bigr\}$ is disjoint from
$\H$, and the mirrors that meet its boundary are the orthogonal
complements of the roots $l$ that
satisfy $\bigl|\ip{\rho}{l}\bigr|^2=3$.
\end{lemma}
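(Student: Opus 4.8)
The plan is to work directly with the explicit formulas for height and for the distance from a point to a mirror, using the ``Leech model'' $L = \Lambda \oplus \cell$ with $\rho = (0;0,1)$. First I would compute $\height_\rho$ on a general vector $w=(x;y,z)\in L\tensor\C$. Using the inner product on the cell $\cell$, one has $\ip{\rho}{w} = \overline{\theta}\,\overline{\,?\,}$; more precisely $\ip{(0;0,1)}{(x;y,z)} = \overline{\theta} \cdot \overline{y}$ (reading off the off-diagonal $\theta$ in the Gram matrix), so $|\ip{\rho}{w}|^2 = 3|y|^2$. The key point is that $\height_\rho(w) = -|\ip{\rho}{w}|^2/w^2 = -3|y|^2/w^2$, and this is positive exactly on the negative-norm vectors, i.e.\ on $\ch^{13}$.

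Next I would bring in the roots. A root $l$ has $l^2 = 3$, so by \eqref{eq-distance-formula-for-point-and-mirror} a point $p\in\ch^{13}$ represented by $w$ lies on $l^\perp$ iff $\ip{w}{l}=0$, and otherwise $d(p,l^\perp) = \sinh^{-1}\sqrt{-|\ip{w}{l}|^2/(3w^2)}$. To say the open horoball $\{\height_\rho < 1\}$ misses $\H$, I want: for every root $l$ and every $w$ with $-3|y|^2/w^2 < 1$ (equivalently $w^2 < -3|y|^2$, in particular $w^2<0$ so $w$ represents a point of $\ch^{13}$), one has $\ip{w}{l}\neq 0$. I would prove the contrapositive: if $\ip{w}{l}=0$ then $\height_\rho(w)\ge 1$. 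The mechanism is that $l$, being a root (primitive norm-$3$ vector with all inner products divisible by $\theta$), cannot be too ``close'' to the cusp $\rho$. Concretely, writing $l = (\lambda; \mu, \nu)$ with $\lambda\in\Lambda$, $\mu,\nu\in\E$, the condition $\ip{w}{l}=0$ lets me solve for one coordinate of $w$ and then estimate $\height_\rho(w)$ from below using the minimal norm $6$ and covering radius $\sqrt3$ of $\Lambda$; the extreme case, where equality $\height_\rho(w)=1$ is approached, is governed by how small $|\ip{\rho}{l}|^2 = 3|\mu|^2$ can be, i.e.\ by roots $l$ with $|\mu|=1$, which is exactly the condition $|\ip{\rho}{l}|^2 = 3$ in the statement.

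For the second assertion---that the mirrors meeting the boundary horosphere $\{\height_\rho = 1\}$ are precisely the $l^\perp$ with $|\ip{\rho}{l}|^2 = 3$---I would argue both inclusions. If $|\ip{\rho}{l}|^2 = 3$, I would exhibit a point $w\in l^\perp$ with $\height_\rho(w) = 1$: one uses the freedom in the other coordinates of $w$ together with the fact that $\Lambda$ has covering radius $\sqrt3$, so that a suitable lattice-translate argument (reflecting in $R_l$, or simply shifting $x$ by a Leech vector) produces a vector on $l^\perp$ of norm exactly $-3|\mu|^2 = -3$ and with $|y|=1$; then $\height_\rho(w) = -3\cdot 1/(-3) = 1$. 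Conversely, if $l^\perp$ meets the closed horoball, the estimate from the previous paragraph, run with equality tracked carefully, forces $|\ip{\rho}{l}|^2 \le 3$; and $|\ip{\rho}{l}|^2 = 3|\mu|^2$ is either $0$ or $\ge 3$ since $\mu\in\E$, while $\mu = 0$ would make $l$ a root in the ``cusp'' sublattice and one checks (again using that $\Lambda$ has no vectors of norm $<6$, so no norm-$3$ vectors) that this is impossible. Hence $|\ip{\rho}{l}|^2 = 3$.

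The main obstacle I anticipate is the lower bound $\height_\rho(w) \ge 1$ for $w \in l^\perp$: this is where the arithmetic of the Leech lattice genuinely enters, and one must combine the divisibility of all inner products by $\theta$, the minimal norm $6$, and the covering radius $\sqrt 3$ in just the right way to get a sharp inequality rather than something off by a constant. I would expect to reduce, after scaling $l$ so that $|\mu|$ is as small as possible (namely $1$, if it is nonzero) and using $\Gamma$-translations to normalize, to a finite check: the inequality becomes a statement about how close a Leech vector plus an Eisenstein-integer combination can come to a target, which is controlled precisely by the covering radius being $\sqrt3$. The rest---the explicit height formula, the distance formula manipulations, and the construction of a boundary point for each qualifying mirror---I expect to be routine bookkeeping with the Gram matrix of $\cell$ and the scaling-invariance of $\height$.
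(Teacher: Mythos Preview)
Your overall shape---show $\height_\rho\geq1$ on every mirror $l^\perp$, and identify the mirrors achieving equality---is right, but you have misdiagnosed what the argument needs.  The covering radius of $\Lambda$ is irrelevant here: it governs how close an \emph{arbitrary} point of $\Lambda\otimes\R$ is to a lattice point, but the vector $w$ representing a point of $l^\perp$ has arbitrary complex entries, so there is nothing for the covering radius to bite on.  You appear to be anticipating the argument of lemma~\ref{lem-some-Leech-reflection-moves-cusp-closer-to-projection-point}, where the covering radius genuinely enters because one is \emph{choosing} a Leech root near a given target; that is a different and much harder statement.

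The paper's proof is three lines and bypasses your optimization over all $w\in l^\perp$ entirely.  First, $\rho$ is orthogonal to no root, since $\rho^\perp=\Lambda\oplus\langle\rho\rangle$ contains no norm-$3$ vectors; this is the only place the arithmetic of $\Lambda$ enters, and all that is used is the absence of norm-$3$ vectors (which you correctly identified).  Together with the divisibility of all inner products by~$\theta$, this gives $\bigl|\ip{\rho}{l}\bigr|^2\geq3$ for every root~$l$.  Second, rather than bounding $\height_\rho$ over the whole hyperplane, one simply evaluates it at the single point of $l^\perp$ nearest the cusp $\rho$: this is the vector projection $p=\rho-\tfrac{1}{3}\ip{\rho}{l}\,l$, and a two-line computation gives $\height_\rho(p)=\bigl|\ip{\rho}{l}\bigr|^2/3\geq1$, with equality exactly when $\bigl|\ip{\rho}{l}\bigr|^2=3$.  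This same $p$ is the explicit boundary witness you were looking for in the second assertion; no lattice-translate construction is needed.
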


\begin{proof}
The special property of $\rho$ we need is that it is orthogonal to no
roots.  This is clear because $\rho^\perp=\Leech\oplus\langle\rho\rangle$ has no
vectors of norm~$3$.  Now, if $l$ is a root then the point of $l$'s
mirror nearest to $\rho$ is represented by the vector projection of
$\rho$ to $l^\perp$, namely $p=\rho-\frac{1}{3}\ip{\rho}{l}l$.  One
computes
$\height_\rho(p)=\bigl|\ip{\rho}{l}\bigr|^2/3$.  This is at least~$1$, with equality just
if $\bigl|\ip{\rho}{l}\bigr|^2=3$.
\end{proof}

Lemma~\ref{lem-disjoint-horoball-exists} also identifies the mirrors closest to $\rho$, namely the
orthogonal complements of the roots $l$ with $\ip{\rho}{l}$ equal to a
unit multiple of $\theta$.  After scaling $l$ we may suppose
\begin{equation}
\label{eq-definition-of-a-leech-root-l}
l
=
\biggl(
\lambda
;
1,
\theta\Bigl(\frac{\lambda^2-3}{6}+\nu_l\Bigr)
\biggr)
\end{equation}
where $\lambda\in\Leech$ and $\nu_l$ is purely imaginary and chosen so
that the last coordinate lies in $\E$.  The set of possibilities for
$\nu_l$ is $\frac{1}{\theta}(\frac{1}{2}+\Z)$ if $6$ divides
$\lambda^2$ and $\frac{1}{\theta}\Z$ otherwise.  Despite its elaborate form,
the last coordinate is well-suited for the calculations required
in next section.  We call these roots the {\it Leech roots} (hence
the notation $l$), their mirrors the {\it Leech mirrors} and the
meridians $(\mu_{a,A,l^\perp},R_l)$ the {\it Leech meridians}.
We have now made concrete all the objects in theorem~\ref{thm-leech-generation}.

We remark that there is a $25$-dimensional integral Heisenberg group
in $\PG$ that acts simply-transitively on the Leech roots.  It
consists of the ``translations'' in the proof of lemma~\ref{lem-classification-of-roots-of-height-theta}.
Conceptually, this is a simpler way to index the Leech roots than by
the pairs $\lambda,\nu_l$, but in the end it is equivalent.  Also,
these translations act cocompactly on $\partial A$, verifying the
technical condition we required on $A$ in section~\ref{sec-loops-in-arrangement-complements}.

\smallskip
Early in the section we explained how one can meaningfully say that
one point of $\ch^{13}$ is closer to a point of $\partial\ch^{13}$
than another point of $\ch^{13}$ is.  We will also need to be able to
compare the ``distance'' from a point $p\in\ch^{13}$ to two different
cusps $V,V'\in\partial\ch^{13}$ of $\PG$.  Being cusps, they can be
represented by lattice vectors $v$, $v'$, which we may choose to be
primitive.  Then $v,v'$ are well-defined up to multiplication by sixth
roots of unity, and the corresponding height functions $\height_v$,
$\height_{v'}$ are independent of these factors.  So we will say that
$p$ is closer to $V$ than to $V'$ if $\height_v(p)<\height_{v'}(p)$.
Note that this construction depends on the fact that $V,V'$ are cusps;
it does not make sense for general points of $\partial\ch^{13}$.  In
our applications, $v$ and $v'$ will always be $\rho$ and a
translate of~$\rho$.

\section{The Leech meridians generate}
\label{sec-Leech-meridians-generate}

\noindent
The purpose of this section is to prove
theorem~\ref{thm-leech-generation}, showing that the Leech meridians
generate the orbifold fundamental group
$G_a:=\piorb\bigl((\ch^{13}-\H)/\PG,a\bigr)$, where $\M$, $\H$, $\PG$,
$A$, $a$ and the Leech meridians $(\mu_{a,A,l^\perp},R_l)$ are defined
in the previous section.  We write $G$ for the subgroup of $G_a$ they
generate, and we must prove that
$G$ is all of~$G_a$.

We begin with an overview of the proof, which follows that of
theorem~\ref{thm-not-quite-strong-enough}.  It amounts to showing that
the mirror of any non-Leech root $s$ satisfies one of the hypotheses
\eqref{item-can-move-A-closer-to-p}--\eqref{item-more-complicated-move-closer-condition}
of lemma~\ref{lem-lowering-lemma}.  It turns out
(lemma~\ref{lem-some-Leech-reflection-moves-cusp-closer-to-projection-point})
that if $\bigl|\ip{\rho}{s}\bigr|^2>21$ then the simplest hypothesis
\eqref{item-can-move-A-closer-to-p} holds.  If
$\bigl|\ip{\rho}{s}\bigr|^2>9$ then the same method shows that the
next simplest hypothesis
\eqref{item-can-move-A-closer-to-H-and-no-further-from-A} holds
(lemmas
\ref{lem-some-Leech-reflection-moves-cusp-closer-to-projection-point}
and~\ref{lem-height-reduction-almost-moves-rho-closer-to-projection-point}).
For the case $\bigl|\ip{\rho}{s}\bigr|^2=9$ we enumerate the orbits of
roots $(?;\theta,?)$ under the $\G$-stabilizer of~$\rho$
(lemma~\ref{lem-classification-of-roots-of-height-theta}).  There are
three orbits, satisfying hypotheses
\eqref{item-can-move-A-closer-to-p},
\eqref{item-can-move-A-closer-to-H-and-no-further-from-A} and
\eqref{item-more-complicated-move-closer-condition} of
lemma~\ref{lem-lowering-lemma}, respectively.  The last orbit is
especially troublesome
(lemma~\ref{lem-hole-meridians-generated-by-leech-meridians}).  The
proof of theorem~\ref{thm-leech-generation} is then a wrapper
around these results.

The following description of vectors in $L\tensor\C$  is very
important in our computations.   We will use it
constantly, often specializing to the case of roots.  Every vector $s\in
(L\tensor\C)-\rho^\perp$ can be written uniquely in the form
\begin{equation}
\label{eq-definition-of-vector-s}
s
=
\biggl(
\sigma
;
m
,
\frac{\theta}{\mbar}\Bigl(\frac{\sigma^2-N}{6}+\nu\Bigr)
\biggr)
\end{equation}
where $\sigma\in\Leech\tensor\C$, $m\in\C-\{0\}$, $N$ is the norm
$s^2$, and $\nu$ is purely imaginary.  Restricting the first coordinate to
$\Leech$ and the others to $\E$ gives the elements of $L-\rho^\perp$.
Further restricting $N$ to~$3$ gives the roots of $L$, and finally
restricting $m$ to~$1$ gives the Leech roots from
\eqref{eq-definition-of-a-leech-root-l}.  For vectors of any fixed negative (resp.\ positive) norm,
the larger the absolute value of the middle coordinate~$m$, the
further from $\rho$ lie the corresponding points (resp.\ hyperplanes)
in $\ch^{13}$.

One should think of $s$ from \eqref{eq-definition-of-vector-s} as being  associated to the
vector $\sigma/m$ in the positive-definite Hermitian vector space $\Lambda\tensor_\E\C$.  By this we mean that the most important
part of $\ip{s}{s'}$ is governed by the
relative positions of $\sigma/m$ and $\sigma'/m'$.  Namely, by writing
out $\ip{s}{s'}$, completing the square and
patiently rearranging, one can check
\begin{equation}
\label{eq-general-inner-product-formula}
\ip{s}{s'}
=
m\mbar'
\biggl[
\frac{1}{2}
\biggl(
\frac{N'}{|m'|^2}+\frac{N}{|m|^2}
-
\Bigl(\frac{\sigma}{m}-\frac{\sigma'}{m'}\Bigr)^2
\biggl)
+
\Im\Bigip{\frac{\sigma}{m}}{\frac{\sigma'}{m'}}
+
3\Bigl(\frac{\nu'}{|m'|^2}-\frac{\nu}{|m|^2}\Bigr)
\biggr].
\end{equation}
\emph{Caution:} we are using the convention that the imaginary
part of a complex number is imaginary; for example 
$\Im\theta$ is $\theta$ rather than $\sqrt3$.

In the rest of this section, ``$s$'' will only be used for roots.

\begin{lemma}
\label{lem-classification-of-roots-of-height-theta}
Suppose $\lambda_6,\lambda_9$ are fixed
vectors in $\Leech$ with
norms $6$ and~$9$.  Then
under the  $\G$-stabilizer of $\rho$, every root with $m=\theta$ is
equivalent to  
$(0;\theta,-\w)$ or $(\lambda_6;\theta,\w)$ or
$(\lambda_9;\theta,-1)$.
\end{lemma}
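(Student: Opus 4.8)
The plan is to exploit the structure of the stabilizer $\G_\rho$ of the cusp $\rho$. It contains (i) a $25$-dimensional Heisenberg group of ``translations''\,---\,the Eichler-type transvections attached to $\rho$, i.e.\ the ones acting simply transitively on the Leech roots\,---\,and (ii) a copy of $\Aut\Leech$ acting on the $\Leech$-coordinate. I would first write these down explicitly and record three facts: every translation and every element of $\Aut\Leech$ preserves the middle coordinate $m$ (so they permute the roots with $m=\theta$ among themselves); the translation indexed by $\mu\in\Leech$, applied to a root written as in \eqref{eq-definition-of-vector-s}, replaces $\sigma$ by $\sigma+m\mu$, while the center of the Heisenberg group shifts the (purely imaginary) parameter $\nu$; and $\Aut\Leech$ carries $\sigma$ to $h\sigma$ and fixes the last coordinate.

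Given a root $s=(\sigma;\theta,z)$ with $m=\theta$, the norm condition $s^2=3$ forces $\Re z=\tfrac16(3-\sigma^2)$. Since $\Leech=\theta\Leech^*$, the ``associated vector'' $x_s:=\sigma/\theta$ lies in $\Leech^*$ with $x_s^2=\sigma^2/3$, and the translation indexed by $\mu$ sends $x_s$ to $x_s+\mu$. Now I would invoke the hypothesis that the covering radius of $\Leech$ is $\sqrt3$: there is $\mu\in\Leech$ with $(x_s-\mu)^2\le3$, so after applying the corresponding translation we may assume $x_s^2\le3$, i.e.\ $\sigma^2\le9$. As $\sigma\in\Leech$, whose norms are $0$ and $6,9,12,\dots$, this leaves only $\sigma^2\in\{0,6,9\}$. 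Applying an element of $\Aut\Leech$\,---\,using transitivity on norm-$6$ and norm-$9$ vectors\,---\,I may then arrange $\sigma=0$, $\sigma=\lambda_6$, or $\sigma=\lambda_9$, without touching $z$.

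It remains to normalize $z$. In the three cases $\Re z$ is now $\tfrac12$, $-\tfrac12$, or $-1$; a short computation in $\E$ shows that the admissible $z$'s (those in $\E$ with the prescribed real part) form a single coset $z_0+\theta\Z$, with $z_0=-\w$, $\w$, or $-1$ respectively. A central translation, which fixes $\sigma$, then moves $z$ to $z_0$, producing $(0;\theta,-\w)$, $(\lambda_6;\theta,\w)$, or $(\lambda_9;\theta,-1)$, as claimed.

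The main obstacle I anticipate is the first step: pinning down the cusp stabilizer concretely\,---\,writing the transvections with the correct normalization so that they genuinely preserve $L$ (a $\theta$-divisibility check) and fix $\rho$, verifying they translate $x_s$ as claimed, and identifying the center of the Heisenberg group precisely enough to carry out the $z$-normalization. Once this machinery is in place, the covering-radius bound does the essential work in one line and the rest is elementary arithmetic with Eisenstein integers. It is worth remarking, though not needed for the statement, that the three roots lie in distinct $\G_\rho$-orbits, since their images in $\Leech^*/\Leech$ are cosets of minimal norm $0$, $2$ and $3$; this is what makes them fall under hypotheses \eqref{item-can-move-A-closer-to-p}, \eqref{item-can-move-A-closer-to-H-and-no-further-from-A} and \eqref{item-more-complicated-move-closer-condition} of lemma~\ref{lem-lowering-lemma}, respectively.
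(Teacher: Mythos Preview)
Your plan is correct and is essentially the paper's own argument: use the Heisenberg translations $T_{\lambda,z}$ in $\G_\rho$ to reduce $\sigma$ modulo $\theta\Leech$, then $\Aut\Leech$-transitivity on vectors of norm $6$ and $9$, then the central translations $T_{0,z}$ (which add elements of $\Im\E$ to the last coordinate) to normalize $z$. The one cosmetic difference is that the paper cites Wilson directly for the fact that every class in $\Leech/\theta\Leech$ contains a vector of norm $0$, $6$ or $9$, whereas you derive it from the covering radius $\sqrt3$ together with the observation that norms in $\Leech$ lie in $3\Z$; these are equivalent, and the paper uses the covering-radius formulation itself in the very next lemma.
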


\begin{proof}
The $\G$-stabilizer of $\rho$ contains the Heisenberg group of ``translations''
\begin{align*}
&(l;0,0)\mapsto\bigl(l;0,\thetabar^{-1}\ip{l}{\lambda}\bigr)\\
\llap{$T_{\lambda,z}:$ }&
(0;1,0)\mapsto\bigl(\lambda;1,\theta^{-1}(z-\lambda^2/2)\bigr)\\
&(0;0,1)\mapsto(0;0,1)
\end{align*}
where $\lambda\in\Leech$ and $z\in\Im\C$ are such that
$z-\lambda^2/2\in\theta\E$.  Suppose $s\in L$ has the form
\eqref{eq-definition-of-vector-s} with $N=3$ and $m=\theta$.  Applying
$T_{\lambda,z}$ to $s$ changes the first coordinate by
$\theta\lambda$.  By \cite[p.\ 153]{Wilson}, every element of $\Leech$
is congruent modulo $\theta\Leech$ to a vector of norm $0$, $6$
or~$9$, so we may suppose $\sigma$ has one of these norms.  Since
$\Aut\Leech$ fixes $\rho$ and acts transitively on the vectors of each
of these norms \cite[p.\ 155]{Wilson}, we may suppose $s=0$,
$\lambda_6$ or~$\lambda_9$.  That is, $s$ is one of
$$
\textstyle
(0;\theta,\frac{1}{2}-\nu)
\qquad
(\lambda_6;\theta,-\frac{1}{2}-\nu)
\qquad
(\lambda_9;\theta,-1-\nu).
$$ In each of the three cases, the possibilities for $\nu$ differ by
the elements of $\Im\E$.  Applying $T_{0,z}$ ($z\in\Im\E$) adds $z$ to
the third coordinate of~$s$.  Therefore we may take $\nu=\theta/2$,
$\thetabar/2$ and $0$ in the three cases, yielding the roots in the
statement of the lemma.  (These three roots are
inequivalent under the
$\G$-stabilizer of $\rho$,  but we
don't need this.)
\end{proof}

\begin{lemma}
\label{lem-some-Leech-reflection-moves-cusp-closer-to-projection-point}
Suppose $s$ is the root
$(0;\theta,-\w)$ or a root as in \ref{eq-definition-of-vector-s} with $|m|=2$ or $|m|>\sqrt7$, and
define  
$p$ as the point of $s^\perp$ nearest $\rho$.  Then there is a
triflection in a Leech root that moves $\rho$ closer to $p$.
\end{lemma}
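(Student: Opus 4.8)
The plan is to unwind the phrase ``moves $\rho$ closer to $p$'' into a concrete inequality about absolute values of inner products, and then to verify it by a judicious choice of Leech root. Write $s$ in the form \eqref{eq-definition-of-vector-s}. Then $p$, being the point of $s^\perp$ nearest $\rho$, is the vector projection $\rho-\tfrac13\ip{\rho}{s}s$ of $\rho$ onto $s^\perp$, so $p^2=-\tfrac13|\ip{\rho}{s}|^2$ is independent of any auxiliary choice. Since $p^2$ is fixed and $\height_v(p)=-|\ip{v}{p}|^2/p^2$ by \eqref{eq-definition-of-height}, and $\height$ is invariant under $\PG$, a triflection $R$ moves our horoball $A$ (equivalently $\rho$) closer to $p$ exactly when $\height_{R\rho}(p)<\height_\rho(p)$, i.e.\ when $|\ip{R\rho}{p}|^2<|\ip{\rho}{p}|^2$. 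For a Leech root $l$ normalised as in \eqref{eq-definition-of-a-leech-root-l} one has $\ip{\rho}{l}=\theta$, and a one-line computation with the $\w$-reflection formula gives $R_l\rho=\rho+\wbar\,l$. So it suffices to produce a Leech root $l$ with $|\ip{\rho}{p}+\wbar\ip{l}{p}|^2<|\ip{\rho}{p}|^2$, a statement involving only the two complex numbers $\ip{\rho}{p}$ and $\ip{l}{p}$.

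First I would evaluate these. One finds $\ip{\rho}{p}=-\tfrac13|\ip{\rho}{s}|^2=-|m|^2$, a negative real, and $\ip{l}{p}=\ip{l}{\rho}-\tfrac13\overline{\ip{\rho}{s}}\,\ip{l}{s}=\theta\bigl(\tfrac13 m\ip{l}{s}-1\bigr)$. The last ingredient $\ip{l}{s}$ is handed to us by \eqref{eq-general-inner-product-formula}: writing the image of $l$ in $\Leech\tensor_\E\C$ as a lattice vector $\lambda$ (with accompanying imaginary parameter $\nu_l$ as in \eqref{eq-definition-of-a-leech-root-l}), the quantity $\tfrac13 m\ip{l}{s}-1$ decomposes as a real part $P:=\tfrac{|m|^2-1}{2}-\tfrac{|m|^2}{6}\delta^2$, where $\delta^2:=\bigl(\lambda-\tfrac{\sigma}{m}\bigr)^2\ge0$, plus a purely imaginary part built from $\Im\ip{\lambda}{\sigma/m}$, $\nu$ and $\nu_l$. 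Substituting back, using $\wbar\theta=1-\w$ and $|1-\w|^2=3$, and completing the square, the required inequality becomes
\begin{equation*}
\Bigl(P-\tfrac12|m|^2\Bigr)^2+3\Bigl(q-\tfrac16|m|^2\Bigr)^2<\tfrac13|m|^4,
\end{equation*}
where $q$ denotes that imaginary part divided by $\theta$.

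Now the parameters at our disposal are $\lambda$ and $q$. By the covering radius $\sqrt3$ of $\Leech$ we may pick $\lambda$ with $\delta^2\le3$, so $P\le\tfrac{|m|^2-1}{2}$; and for fixed $\lambda$ the admissible $\nu_l$ form a coset of $\tfrac1\theta\Z$, so $q$ can be changed by an arbitrary multiple of $\tfrac13|m|^2$, whence we may assume $\bigl|q-\tfrac16|m|^2\bigr|\le\tfrac16|m|^2$ and the second square is at most $\tfrac1{12}|m|^4$. With these choices the inequality is implied by $0<P<|m|^2$; the upper bound is automatic, and $P>0$ holds as soon as $\delta^2<3-3/|m|^2$. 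For the exceptional root $(0;\theta,-\w)$ this is immediate, since $\sigma/m=0$ is itself a point of $\Leech$: take $\lambda=0$, so $\delta^2=0$, $P=1$, and a coset of $\Z$ always meets the resulting interval for $q$. For $|m|=2$ or $|m|^2\ge9$ the covering radius gives the needed bound on $\delta^2$ unless $\sigma/m$ lies very near a deep hole of $\Leech$.

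The hard part will be exactly that deep-hole case, where $\delta^2$ is forced close to $3$ (indeed $P$ close to $-\tfrac12$) and the crude estimate for $q$ is too weak. There one must exploit the multiplicity of near-lattice-points: at a deep hole there are several vectors of $\Leech$ at distance about $\sqrt3$ from $\sigma/m$, and replacing $\lambda$ by a neighbour shifts $\Im\ip{\lambda}{\sigma/m}$ continuously, which together with the $\tfrac13|m|^2\Z$ freedom in $q$ should give enough room to push the left-hand side below $\tfrac13|m|^4$. Presumably this is also why the roots with $|m|^2=7$, and those with $|m|^2=3$ other than $(0;\theta,-\w)$, are excluded from the statement and treated by the companion lemmas of this section.
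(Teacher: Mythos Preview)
Your reduction to the inequality
\[
\bigl(P-\tfrac12|m|^2\bigr)^2+3\bigl(q-\tfrac16|m|^2\bigr)^2<\tfrac13|m|^4
\]
is correct and is a reparametrisation of the paper's criterion $|y-(1-\zeta)|<\sqrt3$; your use of only the $\w$-reflection together with the full $\tfrac13|m|^2\Z$ freedom in $q$ is equivalent to the paper's use of both $\zeta=\w^{\pm1}$ with $\Im y\in[-\theta/2,\theta/2]$.  So the framework is fine.

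The gap is the ``hard part''.  Your argument proves the lemma only when one can choose $\lambda$ with $(\lambda-\sigma/m)^2<3-3/|m|^2$, and the covering radius gives nothing better than $\le3$.  For $|m|^2=4$ this asks for $\delta^2<9/4$, so the bad region around each deep hole is not small.  Your proposed remedy---switching among the several lattice vectors near a deep hole to nudge $\Im\ip{\lambda}{\sigma/m}$---is a different idea from the paper's, is discrete rather than ``continuous'', and you have not actually carried it out; ``should give enough room'' is not a proof and would require detailed control of the deep-hole geometry of $\Leech$.

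The paper avoids deep holes entirely by a device you do not use: integrality of $\ip{s}{l}\in\theta\E$ forces $y:=\tfrac{\theta}{|m|^2}\ip{p}{l}$ to lie in the shifted lattice $-3/|m|^2+\tfrac{\theta}{m}\E$, hence at distance at least $\sqrt3/|m|$ from the point $-3/|m|^2$ (which lies just outside the rectangle $\Re y\in[-\tfrac{3}{2|m|^2},\,\tfrac32-\tfrac{3}{2|m|^2}]$, $\Im y\in[-\theta/2,\theta/2]$).  In your variables this is the statement that $z=P+\theta q$ lies in $-1+\tfrac{\theta m}{3}\E$ and $z\neq-1$ (since $P>-1$ whenever $\delta^2\le3$), so $|z+1|\ge|m|/\sqrt3$.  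This excluded disk is exactly what cuts off the troublesome $P\le0$ corner and makes the remaining region sit inside the target for all $|m|^2>7$.  For $|m|=2$ one needs still more: the finer lattice $-\tfrac34+\tfrac{\theta}{2}\E$ lets one exclude two further disks (around $-\tfrac34\pm\tfrac{\theta}{2}$), and only with all three removed does the region fit.  None of this appears in your sketch, and without it the argument does not close for either $|m|=2$ or general $|m|>\sqrt7$.
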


\begin{proof}[Proof of
    lemma~\ref{lem-some-Leech-reflection-moves-cusp-closer-to-projection-point}]
  This proof grew from simpler arguments used for
  \cite[Thm.\ 4.1]{Allcock-Inventiones} and
  \cite[Prop.\ 4.2]{Basak-bimonster-1}.

We have $p=\rho-\frac{1}{3}\ip{\rho}{s}s=\rho+\frac{1}{\theta}\mbar s$.  We
want to choose a Leech root $l$, and $\zeta=\w^{\pm1}$, such that the
$\zeta$-reflection in $l$ (call it $R$) moves $\rho$ closer to
$p$.
This is equivalent to $\ip{p}{R(\rho)}$ being smaller in absolute
value than $\ip{p}{\rho}$.  We will write down these inner products
explicitly  and then choose $l$ and
$\zeta$ appropriately.  Direct calculation gives $\ip{p}{\rho}=-|m|^2$.
Also,
$$
R(\rho)
=
\rho-(1-\zeta)\frac{\ip{\rho}{l}}{\ip{l}{l}}l
=
\rho+\frac{1-\zeta}{\theta}l.
$$

It turns out that the necessary estimates on $\ip{p}{R(\rho)}$ are best
expressed in terms of the following parameter:
\begin{align}
y
:={}&
\frac{\theta}{|m|^2}\ip{p}{l}
=
\frac{\theta}{|m|^2}\Bigip{\rho+\frac{\mbar}{\theta}s}{l}
=
-\frac{3}{|m|^2}+\frac{1}{m}\ip{s}{l}
\label{eq-y-in-terms-of-s-paired-with-l}\\
{}\in{}&
-\frac{3}{|m|^2}+\frac{1}{m}\theta\E.
\label{eq-y-in-certain-lattice}
\end{align}
First one works out 
\begin{equation}
\label{eq-rho-closer-to-p-means-this-quantity-less-than-one}
\biggl|
\frac{\ip{p}{R(\rho)}}{\ip{p}{\rho}}
\biggr|
=
\biggl|
\frac{\ip{p}{R(\rho)}}{|m|^2}
\biggr|
=
\bigl|
{\textstyle\frac{1}{3}}(1-\zetabar)y-1
\bigr|.
\end{equation}
Our goal is to choose $l$ and $\zeta$ so that this is less than $1$.
This is equivalent to $|y-(1-\zeta)|<\sqrt3$.  Because the possibilities for
$\zeta$ are $\w^{\pm1}$, this amounts to being able to choose $l$ so
that $y$ lies in the union $V$ of the open balls in $\C$ of radius
$\sqrt3$ around the points $1-\w$ and~$1-\wbar$.  
So our goal is to choose $l$ such that $y$ lies in the shaded
region in figure~\ref{fig-region-containing-y}.

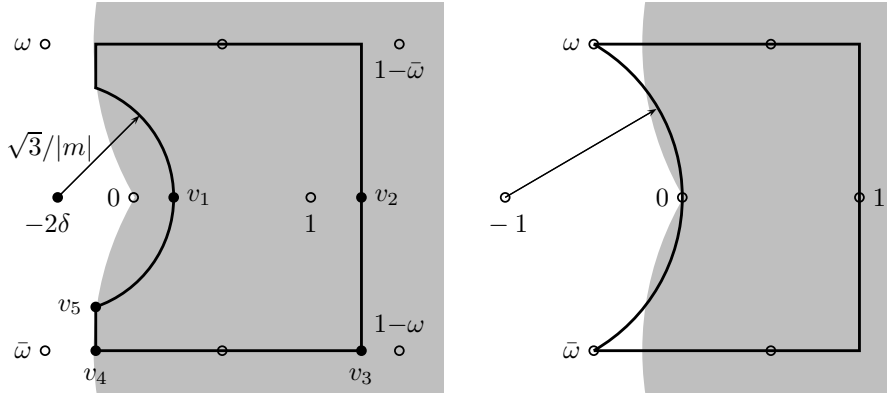
\begin{figure}
\psset{unit=67pt}
\def\URx{1.75}
\def\URy{1.1}
\def\LLx{-.71}
\def\LLy{-1.1}
\begin{pspicture*}(\LLx,\LLy)(\URx,\URy)
\def\pointradius{.03}
\def\pathwidth{.016}
\def\arrowwidth{.008}
\def\framewidth{.01}
\def\pointedgewidth{.01} 
%
%
%
%
\def\Ax{-.5}
\def\Ay{.866}
\def\Bx{.5}
\def\By{.866}
\def\Cx{1.5}
\def\Cy{.866}
\def\Dx{-1}
\def\Dy{0}
\def\Ex{0}
\def\Ey{0}
\def\Fx{1}
\def\Fy{0}
\def\Gx{-.5}
\def\Gy{-.866}
\def\Hx{.5}
\def\Hy{-.866}
\def\Ix{1.5}
\def\Iy{-.866}
\def\centerx{-.429}
\def\centery{0}
\def\arcradius{.655}
\def\maxtheta{71.5}
\def\arrowtipx{.034} 
\def\arrowtipy{.463}
\def\arrowtextx{-.229} 
\def\arrowtexty{.2}
%
%
%
\def\Vx{.226}
\def\Vy{0}
\def\Wx{1.286}
\def\Wy{0}
\def\Xx{\Wx}
\def\Xy{.866}
\def\Yx{-.214}
\def\Yy{.866}
\def\Zx{\Yx}
\def\Zy{.619}
\psset{linestyle=none,fillstyle=solid,fillcolor=lightgray}
\pscircle(\Cx,\Cy){1.732}
\pscircle(\Ix,\Iy){1.732}
\psset{linestyle=solid,linecolor=black,linewidth=\pathwidth, fillstyle=none}
\psline(\Zx,\Zy)(\Yx,\Yy)(\Xx,\Xy)(\Xx,-\Xy)(\Yx,-\Yy)(\Zx,-\Zy)
\psarc(\centerx,\centery){\arcradius}{-\maxtheta}{\maxtheta}
\psset{fillstyle=solid,fillcolor=black}
\pscircle(\Vx,\Vy){\pointradius}
\pscircle(\Wx,\Wy){\pointradius}
\pscircle(\Xx,-\Xy){\pointradius}
\pscircle(\Yx,-\Yy){\pointradius}
\pscircle(\Zx,-\Zy){\pointradius}
\pscircle(\centerx,\centery){\pointradius}
\rput[l](\Vx,\Vy){$\,\,\,v_1$}
\rput[l](\Wx,\Wy){$\,\,\,v_2$}
\rput[t](\Xx,-\Xy){$\vbox to 11pt{}v_3$}
\rput[t](\Yx,-\Yy){$\vbox to 11pt{}v_4$}
\rput[r](\Zx,-\Zy){$v_5\,\,\,$}
\rput[t](\centerx,-\centery){$\vbox to 13pt{}\llap{$-$}2\d$}
\psline[arrows=->,linewidth=\arrowwidth](\centerx,\centery)(\arrowtipx,\arrowtipy)
\rput[br](\arrowtextx,\arrowtexty){$\sqrt3/|m|$}
\psset{linestyle=solid,fillstyle=none,linecolor=black,linewidth=\pointedgewidth}
\pscircle(\Ax,\Ay){\pointradius}
\pscircle(\Bx,\By){\pointradius}
\pscircle(\Cx,\Cy){\pointradius}
\pscircle(\Ex,\Ey){\pointradius}
\pscircle(\Fx,\Fy){\pointradius}
\pscircle(\Gx,\Gy){\pointradius}
\pscircle(\Hx,\Hy){\pointradius}
\pscircle(\Ix,\Iy){\pointradius}
\rput[r](\Ax,\Ay){$\w\,\,\,$}
\rput[t](\Cx,\Cy){$\vbox to 13pt{}1{-}\wbar$}
\rput[r](\Ex,\Ey){$0\,\,\,$}
\rput[t](\Fx,\Fy){$\vbox to 13pt{}1$}
\rput[r](\Gx,\Gy){$\wbar\,\,\,$}
\rput[b](\Ix,\Iy){$\vbox to 13pt{}\raise7pt\hbox{$1{-}\w$}$}
\end{pspicture*}
\def\URx{1.15}
\def\URy{1.1}
\def\LLx{-1.1}
\def\LLy{-1.1}
\begin{pspicture*}(\LLx,\LLy)(\URx,\URy)
\def\pointradius{.03}
\def\pathwidth{.016}
\def\arrowwidth{.008}
\def\framewidth{.01}
\def\pointedgewidth{.01} 
%
%
%
%
\def\Ax{-.5}
\def\Ay{.866}
\def\Bx{.5}
\def\By{.866}
\def\Cx{1.5}
\def\Cy{.866}
\def\Dx{-1}
\def\Dy{0}
\def\Ex{0}
\def\Ey{0}
\def\Fx{1}
\def\Fy{0}
\def\Gx{-.5}
\def\Gy{-.866}
\def\Hx{.5}
\def\Hy{-.866}
\def\Ix{1.5}
\def\Iy{-.866}
\def\centerx{-1}
\def\centery{0}
\def\arcradius{1}
\def\maxtheta{60}
\def\arrowtipx{-.145} 
\def\arrowtipy{.5}
\def\arrowtextx{-.229} 
\def\arrowtexty{.2}
%
%
%
\def\Vx{.226}
\def\Vy{0}
\def\Wx{1}
\def\Wy{0}
\def\Xx{\Wx}
\def\Xy{.866}
\def\Yx{-.5}
\def\Yy{.866}
\def\Zx{\Yx}
\def\Zy{.619}
\psset{linestyle=none,fillstyle=solid,fillcolor=lightgray}
\pscircle(\Cx,\Cy){1.732}
\pscircle(\Ix,\Iy){1.732}
\psset{linestyle=solid,linecolor=black,linewidth=\pathwidth, fillstyle=none}
\psline(\Yx,\Yy)(\Xx,\Xy)(\Xx,-\Xy)(\Yx,-\Yy)
\psarc(\centerx,\centery){\arcradius}{-\maxtheta}{\maxtheta}
\psset{fillstyle=solid,fillcolor=black}
\psline[arrows=->,linewidth=\arrowwidth](\centerx,\centery)(\arrowtipx,\arrowtipy)
\psset{linestyle=solid,fillstyle=none,linecolor=black,linewidth=\pointedgewidth}
\pscircle(\Ax,\Ay){\pointradius}
\pscircle(\Bx,\By){\pointradius}
\pscircle(\Dx,\Dy){\pointradius}
\pscircle(\Ex,\Ey){\pointradius}
\pscircle(\Fx,\Fy){\pointradius}
\pscircle(\Gx,\Gy){\pointradius}
\pscircle(\Hx,\Hy){\pointradius}
\rput[r](\Ax,\Ay){$\w\,\,\,$}
\rput[t](\Dx,\Dy){$\vbox to 13pt{}-1$}
\rput[r](\Ex,\Ey){$0\,\,\,$}
\rput[l](\Fx,\Fy){$\,\,\,1$}
\rput[r](\Gx,\Gy){$\wbar\,\,\,$}
\end{pspicture*}
\caption{See the proof of lemma~\ref{lem-some-Leech-reflection-moves-cusp-closer-to-projection-point}.
  $V$ is the union of the gray (open) disks, which have radius
  $\sqrt3$ and centers $1-\w^{\pm1}$.  We seek a Leech root $l$ so that
  $y$ lies in this region.  $U$ is the closed region bounded by the
  solid line, and is where we can arrange for $y$ to be.  $U$ varies with $|m|$; we have drawn the case $|m|=\sqrt7$, when $v_5$ is on
  the boundary of $V$, and the case $|m|=\sqrt3$, when $v_4$ and $v_5$
  coalesce at~$\wbar$.  Hollow circles indicate Eisenstein integers.}
\label{fig-region-containing-y}
\end{figure}

Now we examine how our choice of $l$ affects $y$.  Writing $l$ as in
\eqref{eq-definition-of-a-leech-root-l}, choosing it
amounts to
choosing
$\lambda\in\Leech$, and then choosing $\nu_l\in\Im\C$ subject to the condition
that the last coordinate of \eqref{eq-definition-of-a-leech-root-l} is in $\E$.
Specializing \eqref{eq-general-inner-product-formula} to the case that
$s$ has norm $N=3$ and $s'$ is the Leech root $l$ gives
$$
\ip{s}{l}
=
m
\biggl[
\frac{3}{2|m|^2}
+
\frac{3}{2}
-
\frac{1}{2}\Bigl(\frac{\sigma}{m}-\lambda\Bigr)^2
+
\Im\Bigip{\frac{\sigma}{m}}{\lambda}
+
3\Bigl(\nu_l-\frac{\nu}{|m|^2}\Bigr)
\biggr].
$$  
Plugging this into formula \eqref{eq-y-in-terms-of-s-paired-with-l} gives
\begin{equation}
y
{}=
\label{eq-formula-for-y}
-\frac{3}{2|m|^2}+\frac{3}{2}-\frac{1}{2}\Bigl(\frac{\sigma}{m}-\lambda\Bigr)^2
+
\Im\Bigip{\frac{\sigma}{m}}{\lambda}+3\Bigl(\nu_l-\frac{\nu}{|m|^2}\Bigr).
\end{equation}

The covering radius of a lattice in Euclidean space is defined as the
smallest number such that the closed balls of that radius, centered at
lattice points, cover Euclidean space.  The covering radius of $\Leech$
is $\sqrt3$, because the underlying real lattice has norms equal to
$3/2$ times those of the real Leech lattice, whose covering radius is
$\sqrt2$ by \cite{Conway-Parker-Sloane}.  Therefore we may take
$\lambda$ so that $0\leq(\sigma/m-\lambda)^2\leq3$.  It follows that
the real part of \eqref{eq-formula-for-y} lies in $[-\d,3/2-\d]$ where
$\d:=3/2|m|^2$.

Next we choose $\nu_l$.  The only constraint on it is that the
last component of $l=(\lambda;1,?)$ must lie in $\E$.  As mentioned
after \eqref{eq-definition-of-a-leech-root-l}, this amounts
to $\nu_l\in\frac{1}{\theta}(\frac12+\Z)$ if $\lambda^2$ is divisible by~$6$,
and $\nu_l\in\frac{1}{\theta}\Z$ otherwise.  In either case,
referring to \eqref{eq-formula-for-y} shows that changing our choice of $\nu_l$ allows
us to change $y$ by any rational integer multiple
of $\theta$.
So
we may take 
$\Im y\in[-\theta/2,\theta/2]$.
After
these choices we have
\begin{equation}
\label{eq-rectangle-containing-y}
\Re y\in[-\d,3/2-\d\bigr]
\quad\hbox{and}\quad
\Im y\in 
[-\theta/2,\theta/2].
\end{equation}

Now we can derive additional information about $y$.  We have $y\neq-2\d$ since $-2\d$ is not in the
rectangle \eqref{eq-rectangle-containing-y}, and
since $y\in-2\d+\frac{\theta}{m}\E$ by
\eqref{eq-y-in-certain-lattice}, $y$ lies at
distance${}\geq\sqrt3/|m|$ from~$-2\d$.  We define $U$ as the closed
rectangle \eqref{eq-rectangle-containing-y} in $\C$ minus the open
$(\sqrt3/|m|)$-disk around $-2\d$.  
We have shown that we may choose a Leech root $l$ such that $y\in U$.
We have indicated $U$ in outline in figure~\ref{fig-region-containing-y}; as $|m|$ increases, the rectangle
moves to the right, 
the center of the removed disk approaches zero, and its radius
approaches zero more slowly than the center does.

Now suppose $|m|^2>7$.  We claim $U\sset V$.  Assuming this for the
moment, we may choose $l$ such that $y$ is in $U$, hence $V$, which
allows us to choose $\zeta=\w^{\pm1}$ so that $\zeta$-reflection in
$l$ moves $\rho$ closer to $p$.  This finishes the proof.
To prove the claim it will suffice to show that the lower half of $U$
lies in the open $\sqrt3$-ball around $1-\w$.  Obviously it suffices
to check this for the points marked $v_1,\dots,v_5$ in
figure~\ref{fig-region-containing-y}.  These are
$v_1=-2\d+\sqrt3/|m|$, $v_2=\frac{3}{2}-\d$,
$v_3=\frac{3}{2}-\d-\frac{1}{2}i\sqrt3$, $v_4=-\d-\frac{1}{2}i\sqrt3$
and
$$
v_5=-\d
-i
\sqrt{\frac{3}{|m|^2}-\frac{9}{4|m|^4}}.
$$ 
Using $|m|^2>7$, one can check that each of these lies at
distance${}<\sqrt3$ from $1-\w$.
This finishes the proof of the $|m|^2>7$ case. (If $|m|^2=7$ then
$v_5$ lies in the boundary of $V$. 
If $|m|^2=4$ then $v_4$ and $v_5$ are outside the boundary; see figure~\ref{fig-m=2-case}.  If
$|m|^2=3$ then $v_1=0$ is on the boundary and $v_4=v_5=\wbar$ is
outside it; see the second part of figure~\ref{fig-region-containing-y}.)

Next we treat the special case $s=(0;\theta,-\w)$.  Choosing $\lambda=0$
gives $\Re y=1$ by \eqref{eq-formula-for-y}.  
Then choosing $\nu_l$ as above, so that $\Im y$ lies in
$[-\theta/2,\theta/2]$, yields
$y\in V$.  So we can move $\rho$ closer to $p$ just as in the
$|m|^2>7$ case.

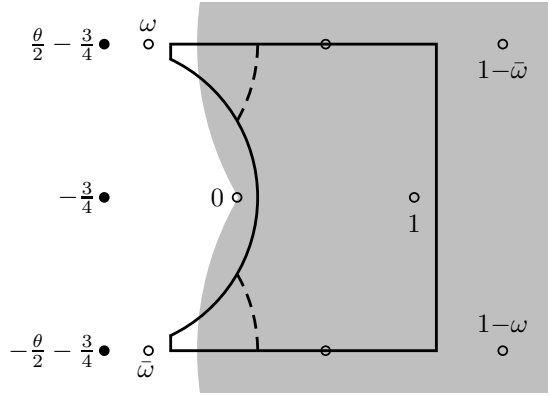
\begin{figure}
\psset{unit=67pt}
\def\URx{1.75}
\def\URy{1.1}
\def\LLx{-1.4}
\def\LLy{-1.1}
\begin{pspicture*}(\LLx,\LLy)(\URx,\URy)
\def\pointradius{.03}
\def\pathwidth{.016}
\def\arrowwidth{.008}
\def\framewidth{.01}
\def\pointedgewidth{.01} 
%
%
%
%
\def\Ax{-.5}
\def\Ay{.866}
\def\Bx{.5}
\def\By{.866}
\def\Cx{1.5}
\def\Cy{.866}
\def\Dx{-1}
\def\Dy{0}
\def\Ex{0}
\def\Ey{0}
\def\Fx{1}
\def\Fy{0}
\def\Gx{-.5}
\def\Gy{-.866}
\def\Hx{.5}
\def\Hy{-.866}
\def\Ix{1.5}
\def\Iy{-.866}
\def\centerx{-.75}
\def\centery{0}
\def\arcradius{.866}
\def\maxtheta{64.8}
\def\maxphi{30} 
\def\arrowtipx{.034} 
\def\arrowtipy{.463}
\def\arrowtextx{-.229} 
\def\arrowtexty{.2}
%
%
%
\def\Vx{.116}
\def\Vy{0}
\def\Wx{1.125}
\def\Wy{0}
\def\Xx{\Wx}
\def\Xy{.866}
\def\Yx{-.375}
\def\Yy{.866}
\def\Zx{\Yx}
\def\Zy{.781}
\psset{linestyle=none,fillstyle=solid,fillcolor=lightgray}
\pscircle(\Cx,\Cy){1.732}
\pscircle(\Ix,\Iy){1.732}
\psset{linestyle=solid,linecolor=black,linewidth=\pathwidth, fillstyle=none}
\psline(\Zx,\Zy)(\Yx,\Yy)(\Xx,\Xy)(\Xx,-\Xy)(\Yx,-\Yy)(\Zx,-\Zy)
\psarc(\centerx,\centery){\arcradius}{-\maxtheta}{\maxtheta}
\psarc[linestyle=dashed](\centerx,-.866){\arcradius}{0}{\maxphi}
\psarc[linestyle=dashed](\centerx,.866){\arcradius}{-\maxphi}{0}
\psset{fillstyle=solid,fillcolor=black}
\pscircle(\centerx,\centery){\pointradius}
\pscircle(\centerx,.866){\pointradius}
\pscircle(\centerx,-.866){\pointradius}
\rput[r](\centerx,-\centery){$-\frac{3}{4}\,\,$}
\rput[r](\centerx,.866){$\frac{\theta}{2}-\frac{3}{4}\,\,$}
\rput[r](\centerx,-.866){$-\frac{\theta}{2}-\frac{3}{4}\,\,$}
\psset{linestyle=solid,fillstyle=none,linecolor=black,linewidth=\pointedgewidth}
\pscircle(\Ax,\Ay){\pointradius}
\pscircle(\Bx,\By){\pointradius}
\pscircle(\Cx,\Cy){\pointradius}
\pscircle(\Ex,\Ey){\pointradius}
\pscircle(\Fx,\Fy){\pointradius}
\pscircle(\Gx,\Gy){\pointradius}
\pscircle(\Hx,\Hy){\pointradius}
\pscircle(\Ix,\Iy){\pointradius}
\rput[b](\Ax,\Ay){$\vbox to 13pt{}\raise5pt\hbox{$\w$}$}
\rput[t](\Cx,\Cy){$\vbox to 13pt{}1{-}\wbar$}
\rput[r](\Ex,\Ey){$0\,\,\,$}
\rput[t](\Fx,\Fy){$\vbox to 13pt{}1$}
\rput[t](\Gx,\Gy){$\vbox to 10pt{}\wbar\,$}
\rput[b](\Ix,\Iy){$\vbox to 13pt{}\raise7pt\hbox{$1{-}\w$}$}
\end{pspicture*}
\caption{The analogue of figure~\ref{fig-region-containing-y} for the special case $|m|=2$ in
  the proof of lemma~\ref{lem-some-Leech-reflection-moves-cusp-closer-to-projection-point}.
  The proof shows that $y$ lies in $U$ (bounded by the solid path) but outside two open disks
  (indicated by the dashed arcs), hence in $V$ (the shaded region).}
\label{fig-m=2-case}
\end{figure}

Finally, we suppose $|m|=2$; we may take $m=2$ by multiplying $s$ by a
unit.  Recall that once we proved that $y$ lies in the rectangle
\eqref{eq-rectangle-containing-y}, we could use \eqref{eq-y-in-certain-lattice} to show that $y$ lies outside the open
disk used in the definition of~$U$.  For $m=2$ the argument shows more.
Since $\delta=\frac{3}{8}$ when $|m|=2$, \eqref{eq-y-in-certain-lattice} shows
$y\in-\frac{3}{4}+\frac{\theta}{2}\E$.  Since
$-\frac{3}{4}\pm\frac{\theta}{2}$ lie in
$-\frac{3}{4}+\frac{\theta}{2}\E$ but not in the rectangle \eqref{eq-rectangle-containing-y},
$y$ lies at distance${}\geq\sqrt3/2$ from each of them, just as it
lies at distance${}\geq\sqrt3/2$ from $-\frac{3}{4}$.  It is easy to
check that $U$ minus the open $\sqrt3/2$-balls around
$-\frac{3}{4}\pm\frac{\theta}{2}$ lies in~$V$; see figure~\ref{fig-m=2-case}.
Therefore $y\in V$, finishing the proof as before.  (One can consider
analogues of these extra disks for any $m$.  
They are unnecessary if $|m|^2>7$, and  turn out to be
useless if $|m|^2=3$ or~$7$.)
\end{proof}

\begin{lemma}
\label{lem-height-reduction-almost-moves-rho-closer-to-projection-point}
Suppose $s$ is the root $(\lambda_6;\theta,\w)$ or a root as in \ref{eq-definition-of-vector-s}
with
$|m|=\sqrt7$, and define $p$ as the point of $s^\perp$ nearest $\rho$.
Then there is a triflection in a Leech root that either moves $\rho$
closer to $p$, or else moves $\rho$ closer to $s^\perp$ while
preserving $\rho$'s distance from~$p$.
\end{lemma}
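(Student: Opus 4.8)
The plan is to rerun the argument of lemma~\ref{lem-some-Leech-reflection-moves-cusp-closer-to-projection-point}, now accepting the weaker of the two permitted conclusions whenever the stronger one fails.  As there, I would look for a Leech root $l$ and a root of unity $\zeta=\w^{\pm1}$ so that the $\zeta$-reflection $R$ in $l$ does what we want, and would package the relevant pairings into the parameter $y=\frac{\theta}{|m|^2}\ip{p}{l}$ of~\eqref{eq-y-in-terms-of-s-paired-with-l}.  The computation in that proof shows that ``$R$ moves $\rho$ closer to $p$'' is exactly $|y-(1-\zeta)|<\sqrt3$, i.e.\ that $y$ lies in the open region~$V$, while the boundary case $|y-(1-\zeta)|=\sqrt3$, i.e.\ $y\in\partial V$, says that $R$ preserves $\rho$'s distance to~$p$.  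A parallel computation, using $R(\rho)=\rho+\frac{1-\zeta}{\theta}l$, the relation $\ip{l}{s}=\bar m\bigl(\bar y+\frac{3}{|m|^2}\bigr)$ implicit in~\eqref{eq-y-in-terms-of-s-paired-with-l}, the height formula $\height_\rho(p)=|\ip{\rho}{s}|^2/3$ from the proof of lemma~\ref{lem-disjoint-horoball-exists} (applied also to $R^{-1}(s)$), and $|\ip{\rho}{s}|^2=3|m|^2$, rewrites ``$R$ moves $\rho$ closer to $s^\perp$'', i.e.\ $|\ip{R(\rho)}{s}|<|\ip{\rho}{s}|$, as the condition that $(1-\zeta)\bigl(\bar y+\frac{3}{|m|^2}\bigr)$ lie in the open $3$-ball about~$3$.

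For a root $s$ with $|m|=\sqrt7$: exactly as in lemma~\ref{lem-some-Leech-reflection-moves-cusp-closer-to-projection-point} I would choose $l$ so that $y$ lands in the region~$U$ (the rectangle~\eqref{eq-rectangle-containing-y} minus the open $(\sqrt3/|m|)$-disk about~$-2\d$).  When $|m|^2=7$ that proof shows $U\sset\overline V$, and $U$ meets $\partial V$ only in the point $v_5=-\d-i\sqrt{\frac{3}{|m|^2}-\frac{9}{4|m|^4}}$, which is at distance $\sqrt3$ from $1-\w$, and its complex conjugate $\bar v_5$, at distance $\sqrt3$ from $1-\wbar$.  If $y\in V$ then the appropriate sign of~$\zeta$ makes $R$ move $\rho$ strictly closer to~$p$, and we are done.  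Otherwise $y\in\{v_5,\bar v_5\}$; treating $y=v_5$ (the other case is its complex conjugate), I take $\zeta=\w$, for which $|y-(1-\zeta)|=\sqrt3$, so $R$ preserves $\rho$'s distance to~$p$.  It remains to substitute $y=v_5$ and $\frac{3}{|m|^2}=\frac37$ into the criterion of the previous paragraph; this is a short explicit computation and gives $|\ip{R(\rho)}{s}|^2=12<21=|\ip{\rho}{s}|^2$, so $R$ moves $\rho$ closer to~$s^\perp$.

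For the special root $s=(\lambda_6;\theta,\w)$ (so $|m|^2=3$, and $\sigma/m=\lambda_6/\theta$ has norm~$2$): keeping $y$ inside the rectangle~\eqref{eq-rectangle-containing-y} requires a $\lambda\in\Leech$ with $(\sigma/m-\lambda)^2\le3$.  Writing $(\lambda_6/\theta-\lambda)^2=2-2\Re\ip{\lambda_6/\theta}{\lambda}+\lambda^2$, and using that $\ip{\lambda_6/\theta}{\lambda}$ lies in~$\E$ (so has half-integral real part), the Cauchy--Schwarz bound $|\ip{\lambda_6/\theta}{\lambda}|^2\le2\lambda^2$, and $\lambda^2\in\{0,6,9,\dots\}$, one finds that the only options are $\lambda=0$, which gives $(\sigma/m-\lambda)^2=2$ and $\Re y=0$, and certain $\lambda$ of norm $6$ or~$9$, which give $(\sigma/m-\lambda)^2=3$ and hence $\Re y=-\frac12$, placing $y$ outside~$V$ and useless for moving $\rho$ closer to~$p$.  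So I would take $\lambda=0$, e.g.\ $l=(0;1,1+\wbar)$; then adjusting $\nu_l$ and reducing $\Im y$ into $[-\theta/2,\theta/2]$ forces $y=0$.  At $y=0$ we have $|y-(1-\zeta)|=|1-\zeta|=\sqrt3$ for both $\zeta=\w^{\pm1}$, so either $R$ preserves $\rho$'s distance to~$p$; and with $\bar y+\frac{3}{|m|^2}=1$ the criterion of the first paragraph gives $|\ip{R(\rho)}{s}|^2=3<9=|\ip{\rho}{s}|^2$, so $R$ moves $\rho$ closer to~$s^\perp$.

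I expect the main obstacle to be the bookkeeping in the $s=(\lambda_6;\theta,\w)$ case: enumerating the $\lambda\in\Leech$ with $(\lambda_6/\theta-\lambda)^2\le3$ and checking that the residual freedom in~$\nu_l$ can never move $y$ off~$0$ once $\Re y=0$.  The $|m|^2=7$ case should be comparatively routine once the description of~$U$, and of $U\cap\partial V=\{v_5,\bar v_5\}$, is imported from lemma~\ref{lem-some-Leech-reflection-moves-cusp-closer-to-projection-point}.
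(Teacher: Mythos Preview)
Your approach is correct and matches the paper's: rerun the argument of lemma~\ref{lem-some-Leech-reflection-moves-cusp-closer-to-projection-point}, catch the boundary cases ($y\in\{v_5,\bar v_5\}$ when $|m|^2=7$; $y=0$ when $s=(\lambda_6;\theta,\w)$ with $\lambda=0$), and there verify that $y+3/|m|^2$ lies strictly inside the appropriate $\sqrt3$-disk about $1-\zeta$, yielding the weaker conclusion. Your ``main obstacle''---enumerating all $\lambda$ with $(\lambda_6/\theta-\lambda)^2\le3$---is unnecessary, since the lemma only asks for \emph{one} triflection satisfying either alternative, so you may simply take $\lambda=0$ as the paper does; your enumeration is also slightly off (e.g.\ $\lambda=\wbar\lambda_6$ has norm~$6$ yet gives $(\sigma/m-\lambda)^2=2$, not~$3$), though this does not affect the argument.
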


\begin{proof}
Suppose first $|m|=\sqrt7$.  Then the proof of lemma~\ref{lem-some-Leech-reflection-moves-cusp-closer-to-projection-point} goes
through unless $y$ is $v_5$ in figure~\ref{fig-region-containing-y}, or its complex
conjugate. So suppose $y=v_5$ or $\bar{v}_5$, and take $\zeta=\w$ or
$\wbar$ respectively.  The argument in the proof of lemma~\ref{lem-some-Leech-reflection-moves-cusp-closer-to-projection-point}, that  $R$ moves $\rho$
closer to $p$, fails because $\bigl|y-(1-\zeta)\bigr|$ equals $\sqrt3$
rather than being strictly smaller.
But it does show that $R(\rho)$ is
exactly as far from $p$ as $\rho$ is.  This is one of our claims, and
what remains to show is that $R$ moves $\rho$  closer to $s^\perp$.

To do this we first solve \eqref{eq-y-in-terms-of-s-paired-with-l} for
$\ip{s}{l}$ in terms of $y$, obtaining $\ip{s}{l}=(3+|m|^2y)/\mbar$.
Then one works out
$$
\biggl|
\frac{\ip{s}{R(\rho)}}{\ip{s}{\rho}}
\biggr|
=
\biggl|
\frac{\ip{s}{\rho}-{\textstyle\frac{1}{\theta}}(1-\zetabar)\ip{s}{l}}{\ip{s}{\rho}}
\biggr|
=
\biggl|
1-
\frac{1}{3}
(1-\zetabar)
\Bigl(
\frac{3}{|m|^2}+y
\Bigr)
\biggr|.
$$
We want this to be less than~$1$.
By copying the argument following \eqref{eq-rho-closer-to-p-means-this-quantity-less-than-one}, this is equivalent to
$y+3/|m|^2$ lying in the open \hbox{$\sqrt3$-disk} around $1-\zeta$.  This is
obvious from the figure because $y+3/|m|^2$ is $3/7$ to the right of
$y=v_5$ or $\bar{v}_5$. This finishes the $|m|=\sqrt7$ case.  

The case $s=(\lambda_6;\theta,\w)$ is similar.  In this case $U$
appears in figure~\ref{fig-region-containing-y}.  
Writing l as in \eqref{eq-definition-of-a-leech-root-l} with $\lambda=0$ leads to $\Re y=0$, so
either $y\in V$ (so the proof of
lemma~\ref{lem-some-Leech-reflection-moves-cusp-closer-to-projection-point}
applies) or else $y=0\in\partial V$.
In this case the argument for  $|m|=\sqrt7$ shows that
$R(\rho)$ is just as close to $p$ as $\rho$ is, and that $R$ moves $\rho$
closer to $s^\perp$.
\end{proof}

\begin{lemma}
\label{lem-hole-meridians-generated-by-leech-meridians}
Let $s=(\lambda_9;\theta,-1)$, define $p$ as the point of $s^\perp$
nearest~$\rho$, and $B$ as the open horoball centered at $\rho$,
whose bounding horosphere is tangent to $s^\perp$ at $p$. 
Then there exists an open ball $U$ around $p$ with
$U\cap\H=U\cap\H_p$, and a triflection $R$ in one of the Leech
mirrors, such that $B\cap R(B)\cap U\neq\emptyset$ and $R(B)\cap U\cap
s^\perp\neq\emptyset$. 
\end{lemma}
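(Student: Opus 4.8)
The plan is to verify hypothesis~\eqref{item-more-complicated-move-closer-condition} of lemma~\ref{lem-lowering-lemma}, with $H=s^\perp$, by explicit computation in the Leech model; the simpler hypotheses \eqref{item-can-move-A-closer-to-p}--\eqref{item-can-move-A-closer-to-H-and-no-further-from-A} are unavailable because the reflection we use actually moves $\rho$ \emph{farther} from $p$. I would take $l$ to be the Leech root with vanishing $\Leech$-component, $l=(0;1,-\w)$, and let $R=R_l$ be the $\w$-reflection in it --- a triflection in a Leech mirror. Then $\rho':=R(\rho)=(0;\wbar,0)$, a unit multiple of the ``opposite'' cusp $(0;1,0)$ of the hyperbolic cell in $L$; in coordinates adapted to the two cusps $\rho,\rho'$ one finds $B=\{v=(x;y,z):|y|^2<-v^2\}$ and $R(B)=\{v=(x;y,z):|z|^2<-v^2\}$, which in particular overlap. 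Throughout I use $p=\rho-s$, so $p^2=-3$.

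The substance is a short list of inner-product evaluations. First, $\height_{\rho'}(p)=4$ (with $\wbar$ in place of $\w$ one would get $7$, which is worse), so $p\notin\overline{R(B)}$. Second, the point $p'$ of $s^\perp$ nearest $\rho'$ has $\height_{\rho'}(p')=1$ --- equivalently, $s$ is a Leech root for the cusp $\rho'$ in the sense of lemma~\ref{lem-disjoint-horoball-exists} --- and $d(p,p')=\cosh^{-1}2$. Since $\height_{\rho'}$ restricted to the totally geodesic $s^\perp$ equals $\height_{\rho'}(p')\cosh^2 d(\cdot,p')=\cosh^2 d(\cdot,p')$, the set $R(B)\cap s^\perp$ is the open metric ball of radius $\cosh^{-1}\sqrt3$ about $p'$ inside $s^\perp$; since $\cosh^{-1}\sqrt3<\cosh^{-1}2$ this ball misses $p$ but has points at distance arbitrarily close to $\cosh^{-1}2-\cosh^{-1}\sqrt3$ from $p$. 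Third, the geodesic from $p$ towards $\rho'$ meets $\partial R(B)$ at $q^*:=p+\tfrac16\rho'$; one checks $d(p,q^*)=\cosh^{-1}(\sqrt{37}/6)$ and $\height_\rho(q^*)=\tfrac{127}{48}<3$, so $q^*\in B\cap\partial R(B)$, and perturbing $q^*$ towards $\rho'$ gives points of $B\cap R(B)$ within distance $\cosh^{-1}(\sqrt{37}/6)+\varepsilon$ of $p$ for any $\varepsilon>0$.

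It remains to produce $U$. I would take $U$ to be the open ball about $p$ of radius $r:=\sinh^{-1}(1/\sqrt3)=\tfrac12\log 3$. Because every inner product in $L$ lies in $\theta\E$, any root $t$ with $p\notin t^\perp$ has $|\ip{p}{t}|^2\geq3$, so $d(p,t^\perp)=\sinh^{-1}(|\ip{p}{t}|/3)\geq r$; as $U$ is open, $U\cap\H=U\cap\H_p$. On the other hand $r$ is strictly larger than both $\cosh^{-1}2-\cosh^{-1}\sqrt3$ and $\cosh^{-1}(\sqrt{37}/6)$, so by the previous paragraph $U$ meets $R(B)\cap s^\perp$ and $B\cap R(B)$; that is exactly hypothesis~\eqref{item-more-complicated-move-closer-condition}, and lemma~\ref{lem-lowering-lemma} then applies. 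I do not anticipate a genuine obstacle: a priori one worries that $U\cap\H=U\cap\H_p$ forces $U$ too small to reach these intersections near $p$, but the $\theta$-divisibility of inner products keeps the ``foreign'' mirrors at distance $\geq\tfrac12\log 3$ from $p$, while the relevant intersections come within about $\tfrac16$ of $p$. The one place demanding care is the bookkeeping with sixth roots of unity --- in particular choosing $R_l$ rather than $R_l^{-1}$, so that $\height_{\rho'}(p)$ is $4$ and not $7$ --- since that is what makes the numerical inequalities hold.
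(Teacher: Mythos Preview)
Your proof is correct and follows essentially the paper's approach: the same Leech root $l=(0;1,-\w)$, the same triflection $R=R_l$, the same radius $\sinh^{-1}\sqrt{1/3}$ for $U$, and the same $\theta$-divisibility argument for $U\cap\H=U\cap\H_p$. The differences are in how you witness the two nonemptiness claims. For $R(B)\cap U\cap s^\perp$, your observation that $\height_{\rho'}|_{s^\perp}=\cosh^2 d(\cdot,p')$ (because $\height_{\rho'}(p')=1$) identifies $R(B)\cap s^\perp$ as a metric ball, so the triangle inequality finishes; the paper instead parameterizes $\geodesic{p'p}$, finds its crossing $x$ with $\partial U$, and checks $\height_{R(\rho)}(x)<3$ directly. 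For $B\cap R(B)\cap U$, you move from $p$ toward $\rho'$ to find $q^*$ on $\partial R(B)$ and check it lies in $B\cap U$; the paper instead moves from its $x$ toward $\rho$ to find a point on $\partial B$ and checks it lies in $R(B)\cap U$. Your route is a bit more conceptual and avoids the messier explicit computation of $x$.

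Two small remarks. First, the path $t\mapsto p+t\rho'$ is not literally the geodesic from $p$ toward $\rho'$, since $\ip{p}{\rho'}=2\w\theta$ is not real; but this is harmless, as all you need is a path along which $\height_{\rho'}$ decreases past~$3$ while staying in $B\cap U$, and your computation shows exactly that. Second, your parenthetical ``would get~$7$'' for the other choice of reflection does not match a direct computation (one gets a larger value), but this is an aside and plays no role in the argument.
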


\begin{proof}
Since we are verifying hypothesis \eqref{item-more-complicated-move-closer-condition} of lemma~\ref{lem-lowering-lemma}, we will
use that lemma's notation $H$ for $s^\perp$.  By definition,
$$
\textstyle
p
=\rho-\frac{1}{3}\ip{\rho}{s}s
=(-\lambda_9;\thetabar,2).
$$
This has norm~$-3$ and lies in $L$.  One computes
$\height_\rho(p)=3$, so $B$ is the height~$3$ open horoball around $\rho$.
We take 
$U$ to have radius 
$\sinh^{-1}\sqrt{1/3}$.  
To check that $U\cap\H=U\cap\H_p$, consider a root $s'$ not
orthogonal to $p$.  Then 
$\bigl|\ip{p}{s'}\bigr|\geq\sqrt3$ since $p\in L$, so 
$$
d(p,{s'}^\perp)
=
\sinh^{-1}\sqrt{-\frac{\bigl|\ip{p}{s'}\bigr|^2}{p^2{s'}^2}}
\geq
\sinh^{-1}\sqrt{1/3},
$$
as desired.

Next we choose $R$ to be the $\w$-reflection in the Leech root
$l=(0;1,-\w)$.  (We found $l$ by applying the proof of lemma~\ref{lem-some-Leech-reflection-moves-cusp-closer-to-projection-point} as
well as we could.  That is, we choose $l$ so that $y$ in that proof
equals the lower left corner $\wbar$ of the second part of
figure~\ref{fig-region-containing-y}.)  This yields
$R(\rho)=(0;\wbar,0)$.  We must verify $R(B)\cap U\cap H\neq\emptyset$
and $B\cap R(B)\cap U\neq\emptyset$.

Our strategy for $R(B)\cap U\cap H\neq\emptyset$ is to begin by
defining $p'$ as the projection of $R(\rho)$ to $H$, which turns out
to lie outside~$U$.  Then we parameterize $\geodesic{p' p}\sset H$,
find the point $x$ where it crosses $\partial U$, and check that $x\in
R(B)$.  So $x\in R(B)\cap H\cap\partial U$.  Therefore a point of $\geodesic{p'
  p}$, slightly closer to $p$ than $x$ is, lies in
$R(B)\cap H\cap U$, showing that this intersection is nonempty.  

Here are the details.  Computation gives
$p'=(\wbar\lambda_9/\theta;2\wbar,-\wbar/\theta)$, of norm~$-1$.
One checks $\ip{p'}{p}=2\wbar\thetabar$, so
$d(p,p')=\cosh^{-1}2>\sinh^{-1}\sqrt{1/3}$ and $p'$ lies outside $U$,
as claimed.  Also, $-\w\theta p'$ and $p$ have negative inner product.
Therefore $\geodesic{p' p}-\{p\}$ is parameterized by $x_t=-\w\theta
p'+t p$ with $t\in[0,\infty)$.  One computes $\ip{x_t}{p}=-3t-6$ and
  $x_t^2=-3t^2-12t-3$, yielding
$$
d(x_t,p)=\cosh^{-1}\sqrt{\frac{\bigl|\ip{x_t}{p}\bigr|^2}{x_t^2 p^2}}
=\cosh^{-1}\sqrt{\frac{t^2+4t+4}{t^2+4t+1}}
$$
Now, $x_t$ lies in $\partial U$ just when this equals
$\sinh^{-1}\sqrt{1/3}$, yielding a quadratic equation for~$t$.  There
is just one nonnegative solution, namely $t=2\sqrt3-2$.  So
$x=x_{2\sqrt3-2}$.  Then one computes
$\ip{R(\rho)}{x}=\wbar\thetabar(4\sqrt3-3)$, so
$$
\height_{R(\rho)}(x)=-\frac{\bigl|\ip{R(\rho)}{x}\bigr|^2}{x^2}
=-\frac{3\bigl(57-24\sqrt3\,\bigr)}{-27}<3.
$$
That is, $x\in R(B)$ as desired.

Our strategy for $B\cap R(B)\cap U\neq\emptyset$ is similar.  We
parameterize the geodesic $\geodesic{x\rho}$, where $x$ is the point
found in the previous paragraph, find the point $y$ where it crosses
$\partial B$, and check that $y$ lies in $R(B)$ and $U$.  Here are the
details.  Computation shows $\ip{x}{\rho}=-6\sqrt3<0$, so
$\geodesic{x\rho}-\{\rho\}$ is parameterized by $y_u=x+u\rho$ with
$u\in[0,\infty)$.  Further computation shows $\ip{y_u}{\rho}=-6\sqrt3$
  and $y_u^2=-27-12u\sqrt3$, so $\height_\rho(y_u)=36/(9+4u\sqrt3)$.
  Setting this equal to~$3$ yields $u=\sqrt3/4$, so $y=y_{\sqrt3/4}$.
  Now one checks that $\height_{R(\rho)}(y)<3$, so that $y\in R(B)$.
  A similar calculation proves $y\in U$.  (In fact this calculation
  can be omitted, because $y,p$ are the projections to $\partial B$ of
  the two points $x,p$ outside $B$, but not both in $\partial B$. 
  Projection to a closed horoball decreases the distance between two
  points, if at least one of them is outside it.  Therefore
  $d(y,p)<d(x,p)=\sinh^{-1}\sqrt{1/3}$.)
\end{proof}

\begin{proof}[Proof of
    theorem~\ref{thm-leech-generation}] We will mimic the
  proof of theorem~\ref{thm-not-quite-strong-enough} (see the end of section~\ref{sec-loops-in-quotients}), using
  lemmas~\ref{lem-some-Leech-reflection-moves-cusp-closer-to-projection-point}--\ref{lem-hole-meridians-generated-by-leech-meridians} in place of the ``moves $a$ closer to $p$''
  hypothesis of that theorem.  Write $G$ for the subgroup of
  $G_a=\piorb\bigl((\ch^{13}-\H)/\PG,a\bigr)$ generated by the Leech
  meridians, i.e., the pairs $(\mu_{a,A,l^\perp},R_l)$ with $l$ a
  Leech root.  We must show that $G$ is all of $G_a$.  It is known
  (\cite{Basak-bimonster-1}, or \cite{Allcock-y555} for a later proof) that the $R_l$'s generate $\PG$.  By the exact
  sequence \eqref{eq-exact-sequence-on-pi-1}, it therefore suffices to show that $G$ contains
  $\pi_1(\ch^{13}-\H,a)$.  By theorem~\ref{thm-generators-for-metric-neighborhood-of-A} it suffices to show that
  $G$ contains every $\Loop{AH}$, with $H$ varying over $\M$.  We do
  this by induction on the distance from $H$ to $\rho$, or properly
  speaking, on $\bigl|\ip{\rho}{s}\bigr|$ where $s$ is a root with
  $H=s^\perp$.  The base case is when $s$ is a Leech root, i.e.,
  $\bigl|\ip{\rho}{s}\bigr|=\sqrt3$, and we just
  observe $\Loop{AH}=(\mu_{a,A,H},R_s)^3$.

Now suppose $s$ is a root but not a Leech root, $H=s^\perp$, $p$ is
the point of $H$ closest to $\rho$, and $B$ is the open horoball
centered at $\rho$ and tangent to $H$ at $p$.  We may assume by
induction that $G$ contains every $\Loop{A,s'^\perp}$ with $s'$ a root
satisfying $\bigl|\ip{\rho}{s'}\bigr|<\bigl|\ip{\rho}{s}\bigr|$.  It
follows from theorem~\ref{thm-generators-for-metric-neighborhood-of-A} that $G$ contains $\pi_1(B-\H,a)$.

The smallest possible value of $\bigl|\ip{\rho}{s}\bigr|$ for a
non-Leech root $s$ is~$3$, occurring when $|m|=\sqrt3$ in \eqref{eq-definition-of-vector-s}. In
the  cases $s=(0;\theta,-\w)$, $(\lambda_6;\theta,\w)$,
resp.\ $(\lambda_9;\theta,-1)$, hypothesis \eqref{item-can-move-A-closer-to-p}, \eqref{item-can-move-A-closer-to-H-and-no-further-from-A},
resp.\ \eqref{item-more-complicated-move-closer-condition} of lemma~\ref{lem-lowering-lemma} is satisfied, by lemma~\ref{lem-some-Leech-reflection-moves-cusp-closer-to-projection-point}, \ref{lem-height-reduction-almost-moves-rho-closer-to-projection-point},
resp.~\ref{lem-hole-meridians-generated-by-leech-meridians}.  If~$s$ is any root with $\ip{\rho}{s}=3$ then it is
equivalent to one of these examples under the $\G$-stabilizer
of~$\rho$, by lemma~\ref{lem-classification-of-roots-of-height-theta}.  
Therefore lemma~\ref{lem-lowering-lemma} applies to
$s^\perp$ for every root $s$ with 
$\ip{\rho}{s}=3$.  It
follows that $G$ contains the corresponding loops $\Loop{AH}$.
If $\bigl|\ip{\rho}{s}\bigr|=3$ then scaling $s$ by a unit reduces to the 
$\ip{\rho}{s}=3$ case.

The next possible value of $\bigl|\ip{\rho}{s}\bigr|$ is $2\sqrt3$,
occurring when $|m|=2$ in \eqref{eq-definition-of-vector-s}.  In this case lemma~\ref{lem-some-Leech-reflection-moves-cusp-closer-to-projection-point} verifies
hypothesis \eqref{item-can-move-A-closer-to-p} of lemma~\ref{lem-lowering-lemma}, which tells us that $G$ contains
$\Loop{AH}$.  The next possible value of $\bigl|\ip{\rho}{s}\bigr|$ is
$\sqrt{21}$, occurring when $|m|=\sqrt7$.  In this case lemma~\ref{lem-height-reduction-almost-moves-rho-closer-to-projection-point}
verifies hypothesis \eqref{item-can-move-A-closer-to-H-and-no-further-from-A} of lemma~\ref{lem-lowering-lemma}, which tells us that $G$
contains $\Loop{AH}$.  
The general step of the induction is essentially the same. 
If $\bigl|\ip{\rho}{s}\bigr|$ is larger than
$\sqrt{21}$, then $|m|$ is larger than $\sqrt7$, so
lemma~\ref{lem-some-Leech-reflection-moves-cusp-closer-to-projection-point} verifies hypothesis \eqref{item-can-move-A-closer-to-p} of lemma~\ref{lem-lowering-lemma}. This tells
us that $G$ contains $\Loop{AH}$, completing the inductive step.
\end{proof}


\begin{thebibliography}{8}
%
\bibitem[A1]{Allcock-New}
D. Allcock, New complex- and quaternionic-hyperbolic reflection
groups,
{\it Duke Math. J.}  {\bf 103} (2000) 303--333. 
%
\bibitem[A2]{Allcock-Inventiones}
D. Allcock,
The Leech lattice and complex hyperbolic reflections,
{\it Invent. Math.} {\bf 140} (2000) 283--301.
%
\bibitem[A3]{Allcock-Enriques}
D. Allcock, The period lattice for Enriques surfaces, {\it Math. Ann.}
{\bf 317} (2000) 483--488.
%
\bibitem[A4]{Allcock-monstrous} D. Allcock, A monstrous proposal, in
  {\it Groups and Symmetries, From neolithic Scots to John McKay},
  ed. J. Harnad.  AMS and CRM, (2009). arXiv:math/0606043.
%
\bibitem[A5]{Allcock-y555}
D. Allcock,
On the $Y_{555}$ complex reflection group,
{\it J. Alg.} {\bf 322} (2009) 1454--1465.
%
\bibitem[ACT1]{ACT-surfaces} D. Allcock, J. Carlson and D. Toledo, The
  complex hyperbolic geometry of the moduli space of cubic surfaces,
  {\it J. Algebraic Geom.} {\bf11} (2002) 659--724.
%
\bibitem[ACT2]{ACT-threefolds} D. Allcock, J. Carlson and D. Toledo, The
  moduli space of cubic threefolds as a ball quotient, {\it
    Mem. Amer. Math. Soc.} {\bf209} (2011). ISBN 978-0-8218-4751-0.
%
\bibitem[Ba1]{Basak-bimonster-1}
T. Basak,
The complex Lorentzian Leech lattice and the bimonster,
{\it J. Alg.} {\bf 309} (2007) 32--56. 
%
\bibitem[Ba2]{Basak-bimonster-2}
T. Basak,
The complex Lorentzian Leech lattice and the bimonster II, 
to appear in {\it Trans.\ A.\ M.\ S.}; 
arXiv:0811.0062.
%
\bibitem[Be]{Bessis}
D. Bessis and J. Michel, 
Explicit presentations for exceptional braid groups,
{\it Experiment. Math.} {\bf 13} (2004) 257--266. 
%
\bibitem[BH]{Bridson-Haefliger}
M. Bridson and A. Haefliger, {\it Metric Spaces of Non-Positive
  Curvature}, Springer, 1999. 
%
\bibitem[Br]{Brieskorn}
E. Brieskorn, Die Fundamentalgruppe des Raumes der regul\"aren Orbits
einer endlichen komplexen Spiegelungsgruppe, {\it Invent. Math.} {\bf12} (1971) 57--61. 
%
\bibitem[C]{Conway}
J. H. Conway,  A group of order 8,315,553,613,086,720,000, {\it
  Bull. London Math. Soc.} {\bf 1} (1969) 79--88.
%
\bibitem[CS]{Conway-Simons}
J. H. Conway and C. S. Simons,
{\it 26 Implies the Bimonster,}
J. Alg. {\bf 235}, (2001) 805-814.
%
\bibitem[CPS]{Conway-Parker-Sloane}
J. H. Conway, R. A.  Parker and N. J. A. Sloane,  The covering radius of
the Leech lattice, {\it Proc. Roy. Soc. London Ser. A} {\bf 380} (1982)
261--290.
%
\bibitem[D]{Dolgachev}
I. Dolgachev, Mirror symmetry for lattice polarized K3 surfaces, {\it J. Math. Sci.} {\bf81} (1996) 2599--2630.
%
\bibitem[FN]{Fox-Neuwirth}
R. Fox and L. Neuwirth, The braid groups, {\it Math. Scand.} {\bf10} (1962) 119--126.
%
\bibitem[Go]{Goldman} W. Goldman, {\it Complex hyperbolic geometry}
  Oxford mathemtical monographs (1999).
%
\bibitem[GM]{Goresky-MacPherson} M. Goresky and R. Macpherson {\it
  Stratified morse theory}, Springer-Verlag (1980).
%
\bibitem[H]{Heckman}
G. Heckman, The Allcock Ball Quotient,	arXiv:1307.1339.
%
\bibitem[HR]{Heckman-Rieken}
G. Heckman and S. Rieken, Two Lorentzian Lattices, preprint.
%
\bibitem[HL]{Heckman-Looijenga}
G. Heckman and E. Looijenga,
The moduli space of rational elliptic surfaces, {\it Algebraic
  geometry 2000, Azumino (Hotaka)} pp. 185--248,
Adv. Stud. Pure Math. 36, Math. Soc. Japan, Tokyo, 2002. 
%
\bibitem[Ka]{Kapovich}
M. Kapovich,
{\it Hyperbolic Manifolds and Discrete Groups}, in
Progress in Mathematics 183, Birkhäuser,
2001. ISBN 0-8176-3904-7.
%
\bibitem[Ko1]{Kondo-genus-3} S.~Kond\=o,
A complex hyperbolic structure for the moduli space of curves of genus three. 
{\it J. Reine Angew. Math.} {\bf 525} (2000) 219--232.
%
\bibitem[Ko2]{Kondo-genus-4} S.~Kond\=o, The moduli space of curves of
  genus 4 and Deligne-Mostow's complex reflection groups.  {\it
    Algebraic geometry 2000, Azumino (Hotaka)}, Adv. Stud. Pure Math.,
  {\bf 36}, Math. Soc. Japan, Tokyo, 2002, 383--400.
%
\bibitem[L1]{Looijenga-elliptic-singularities}
E. Looijenga, On the
  semi-universal deformation of a simple-elliptic hypersurface
  singularity. Unimodularity. {\it Topology} {\bf 16} (1977), no. 3, 257--262.
%
\bibitem[L2]{Looijenga-triangle-singularities}
E. Looijenga, The
  smoothing components of a triangle singularity. II. {\it Math. Ann.}
  {\bf 269}
  (1984), no. 3, 357--387.
%
\bibitem[L3]{Looijenga-Artin-groups}
E. Looijenga, Artin groups and the fundamental groups of some moduli
spaces. {\it J. Topol.} {\bf1} (2008) 187--216.
%
\bibitem[L4]{Looijenga-fourfolds}
E. Looijenga, The period map for cubic fourfolds. {\it Invent. Math.} {\bf177} (2009) 213--233.
%
\bibitem[LS]{Looijenga-Swierstra}
E. Looijenga and R. Swierstra, The period map for cubic
threefolds. {\it Compos. Math.} {\bf 143} (2007) 1037--1049. 
%
\bibitem[Na]{Namikawa}
Y.~Namikawa.
 Periods of {E}nriques surfaces.
 {\it Math. Ann.}, 270:201--222, 1985.
%
\bibitem[Ni]{Nikulin}
V. Nikulin, Finite automorphism groups of K\"ahlerian surfaces of
type K3, {\it Trans. Moscow Math. Soc.} {\bf 38} (1980), No 2, 71--135. 
%
\bibitem[R]{Ratcliffe}
J. Ratcliffe, {\it Foundations of hyperbolic manifolds}. Graduate Texts in Mathematics, 149. Springer-Verlag,  1994. ISBN 0-387-94348-X.
%
\bibitem[W]{Wilson} 
R. Wilson,
{\it The complex Leech Lattice and maximal subgroups of the Suzuki group}.  
J. Algebra  84  (1983), 151-188. 
\end{thebibliography}
\end{document}